\newtheorem{thm}{Theorem}[section]
\newtheorem{cor}[thm]{Corollary}
\newtheorem{prop}[thm]{Proposition}
\newtheorem{Proposition}[thm]{Proposition}
\newtheorem{defin}[thm]{Definition}
\newtheorem{lema}[thm]{Lemma}
\newtheorem{rmk}[thm]{Remark}
\newtheorem{ex}{Example}
\newcommand{\Ann}{\operatorname{Ann}}
\newcommand{\Hilb}{\operatorname{Hilb}}
\def\p{\mathbb P}
\def\P{\mathbb{P}}
\def\K{\mathbb{K}}
\def\Hess{\operatorname{Hess}}
\def\hess{\operatorname{hess}}
\def\grad{\bigtriangledown}
\begin{document}

\title{On higher Hessians and the Lefschetz properties}
\author[R. Gondim]{Rodrigo Gondim}
\address{Universidade Federal Rural de Pernambuco, av. Don Manoel de Medeiros s/n, Dois Irmãos - Recife - PE
52171-900, Brasil}
\email{rodrigo.gondim@ufrpe.br}
\date{Maio, 2016}

\begin{abstract}
We deal with a generalization of a Theorem of P. Gordan and M. Noether on hypersurfaces with vanishing (first) Hessian. We prove that for any given $N\geq 3$, $d \geq 3$ and $2\leq k < \frac{d}{2}$ there are 
irreducible hypersurfaces $X = V(f)\subset \P^N$, of degree $\deg(f)=d$, 
not cones and such that their Hessian of order $k$, $\hess^k_f$, vanishes identically. 
The vanishing of higher Hessians is closely related with the Strong (or Weak) Lefschetz property for standard graded Artinian Gorenstein algebra, 
as pointed out first in \cite{Wa1} and later in \cite{MW}. As an application we construct for each pair $(N,d) \neq (3,3),(3,4)$, standard graded Artinian Gorenstein algebras  
$A$, of codimension $N+1 \geq 4$ and with socle degree $d \geq 3$ which do not satisfy the Strong Lefschetz property, failing at an arbitrary step $k$ with 
$2\leq k<\frac{d}{2}$. We also prove that for each pair $(N,d)$, $N \geq 3$ and $d \geq 3$ except $(3,3)$, $(3,4)$, $(3,6)$ and $(4,4)$ there are standard 
graded Artinian Gorenstein algebras of codimension $N+1$, socle degree $d$, with unimodal Hilbert vectors and which do not satisfy the Weak Lefschetz property.
\end{abstract}

\thanks{*Partially supported  by the CAPES postdoctoral fellowship, Proc. BEX 2036/14-2}

\maketitle

\section*{Introduction}

The Weak and the Strong Lefschetz Properties for $\K$-algebras are algebraic abstractions inspired by the Hard Lefschetz Theorem 
on the cohomology rings of smooth complex projective varieties (see \cite{HMMNWW, MN1}). 
Those properties have strong connections with commutative algebra, combinatorics and geometry as one can see in \cite{HMMNWW, MN1, MS}.\\
%The survey \cite{La} treats the Hard Lefschetz Theorem and \cite[Chapter 7]{Ru} has a detailed account on the geometric-topologic-algebraic relations.\\

We will be interested in Gorenstein algebras. A standard graded $\K$-algebra $A$ is Gorenstein if and only if it satisfies the Poincaré Duality Property (see Proposition \ref{G=PD}). This property is the algebraic 
analogue of Poincaré Duality Theorem for the cohomology ring (see Definition \ref{defArtgrad} and Proposition \ref{G=PD}). Since we are interested in construct algebras failing SLP,  by Lefschetz's hard Theorem (see \cite{La}) we are askink for algebras 
that can not occurs as cohomology rings of smooth projective varieties. \\

The more systematic way to produce examples of graded Artinian Gorenstein algebra not satisfying the WLP seems to be the construction of algebras having nonunimodal Hilbert vector. 
 Stanley in \cite{St2} constructed the first example of a nonunimodal Artinian Gorenstein algebra. After that, others authors studied nonunimodality and its implications (\cite{BI,Bo,BoL,MNZ}). 
 In \cite{MRO} the authors introduced a geometric approach which allowed them to produce other examples of Artinian Gorenstein algebra not satisfying the Weak Lefschetz property. 
The examples obtained in \cite{Ik} and \cite{MW} consist of standard graded Artinian Gorenstein algebras having unimodal Hilbert vector and not satisfying the WLP. These examples inspired us
although they seemed to be sporadic and, apparently, without a strong motivation explaining their constructions.\\

Our goal in this paper is  to present new families of algebras which do not satisfy the SLP or the WLP. Our main results are Theorem \ref{cor:question1}, Theorem \ref{thm:watanabecomponent}, Corollary \ref{cor:wlpodd} and Theorem \ref{thm:wlp}.
The first and the second one are generalizations of Gordan-Noether's Theorem on the existence of non trivial forms with vanishing hessians the other two are aplications on algebras failing SLP or WLP. 
The families constructed, in general, have unimodal Hilbert vectors (see  Theorem \ref{thm:wlp}).
The constructions of  these families were motivated by the Hessian criterion for Lefschetz elements (see \cite{Wa1,MW}), according to which the vanishing of any $k$th Hessian 
of a form implies that the associated algebra does not satisfy the SLP. Therefore, we construct families of forms whose $k$th Hessian vanish identically 
(see Theorem \ref{cor:question1}, Proposition \ref{prop:formacanonica} and Theorem \ref{thm:watanabecomponent}).\\

Hypersurfaces with vanishing Hessian were first studied by O. Hesse who claimed twice they must be cones (\cite{Hesse1,Hesse2}).  
P. Gordan and M. Noether proved in \cite{GN}  that for arbitrary degree $d \geq 3$ Hesse's claim is true for $N \leq 3$ (c.f. Theorem \ref{thm:GN}) and false for $N\geq 4$, by constructing a  series of 
counterexamples in $\P^N$ for each $N\geq 4$ and for each $d\geq 3$ (see \ref{thm:GN2}).
In Corollary \ref{cor:GNgeneral} we prove a generalization of this result for higher Hessians. 
Moreover Gordan and Noether also classify all hypersurfaces with vanishing Hessian in $\p^4$ showing that they are either cones or belong to their series. 
This classical work has been revisited in (\cite{CRS,dB,Lo,GR,Wa2,Wa3,Ru}). 
In \cite{Pe} U. Perazzo studied the case of cubic hypersurfaces with vanishing Hessian and his contributions have been rewritten in modern terms in \cite{GRu}. \\

Proposition \ref{prop:key} replaces the strategy for the construction of all known hypersurfaces with vanishing Hessian. In this proposition 
we give a sufficient condition for the vanishing of $k$th Hessian which is shared by all known examples.   
%Hypersurfaces with vanishing higher Hessian were introduced in 
%\cite{MW} to construct examples of standard graded Artinian Gorenstein algebras that does not satisfy SLP or WLP. 
Although many counterexamples to  WLP are known 
for each codimension $N+1 \geq 3$, at the best of our knowledge no series of examples for each socle degree $d=\deg(f)$ has been constructed so far. 
One of our main results is Theorem \ref{thm:wlp} where we show that for each pair $(N,d)$ with  $N\geq 3$, and with $d \geq 3$, except  $\{(3,3), (3,4), (4,4), (3,6)\}$, there exist standard graded Artinian Gorenstein 
algebras of codimension $N+1$ and socle degree $d$ that do not satisfy the WLP. In the cases $(3,3)$ and $(3,4)$ the algebra satisfy the SLP by Gordan-Noether Theorem \ref{thm:GN}, 
and the case $(4,4)$ is treated separately in Proposition \ref{prop:44}.\\
 
The first family considered is a broad generalization of an example in codimension $4$ due to Ikeda (see \cite{Ik,MW}). The second class of examples is a generalization of the idea used to construct the GN polynomials in \cite{GN} 
which are of the same type as those treated in \cite{Pe} (see Proposition \ref{prop:formacanonica} and \cite{CRS}). \\

%This generalization was called GNP-polynomials. GNP-polynomials exist for each codimension $N + 1 \geq 5$ and for 
%each degree $\deg(f) = d = e+k \geq 3$ and they have vanishing $k$th Hessian (see Proposition \ref{prop:formacanonica} and Theorem \ref{thm:watanabecomponent}). 
%The examples of polynomials having vanishing second Hessian in \cite{MW} are GNP-polynomials. In the case $k=1$ they are special cases of the Gordan-Noether series constructed in \cite{GN}. 
%For $k=1$ and $e=2$ we re-obtain all the so called Perazzo cubic hypersurfaces (c.f. \cite{Pe,GRu}).\\

 Now we describe in more detail the contents. The first section is devoted to preliminary results, including the basic definitions of gradient and Hessian of 
order $k$. We introduce the Lefschetz properties for graded Artinian algebras and we focus on  standard graded 
Gorenstein Artinian algebras, which are our main object of analysis. To this aim we also recall a useful characterization of standard graded Artinian Gorenstein $\K$-algebras, 
Theorem \ref{G=ANNF}. Finally we state the Hessian criterion due to Watanabe, Theorem \ref{WHessian}, connecting the previous subjects. \\

The second section contains the main constructions, two families of forms with vanishing higher hessian generalizing Gordan-Noether's theorem (c.f. Theorem \ref{thm:GN2}). 
We call the polynomials in the first family the exceptional and those in the second family GNP-polynomials (where GNP stands for Gordan-Noether-Perazzo).
The two families are of different nature and the second one includes the case $k=2$. Corollary \ref{cor:GNgeneral} summarizes all the constructions of this section. \\

In the third section we apply the previous results to construct algebras not satisfying the SLP and/ or the WLP. 
Thus, Theorem \ref{thm:GN2} can also be translated in a result concerning algebras not satisfying SLP (see Corollary \ref{cor:slp}). 
Theorem \ref{thm:wlp} deals with algebras not satisfying the WLP but having a unimodal Hilbert vector, showing that there are such examples for arbitrary
socle degree $d\geq 3$.\\

For reader's convenience we recall the main results of Gordan-Noether's theory in an Appendix. The most detailed account on this theory is \cite[Chapter 7]{Ru}. 
We recall  two fundamental results due to Gordan and Noether, Theorem \ref{thm:GN} and Theorem \ref{thm:GN2} 
dealing with Hesse's claim for Hessian in the usual sense. We also survey the classical constructions due to 
Gordan-Noether, Permutti and Perazzo of families of hypersurfaces not cones and with vanishing Hessian.

\section{Preliminaries: Higher Hessians and the Lefschetz properties}

%\subsection{Gorenstein algebras and Higher Hessians}

In this paper $\K$ denotes a field of characteristic zero. Let  $R=\K[x_0,\ldots,x_N]$ be the polynomial ring in $N+1$ variables and let  
$R_d = \K[x_0,\ldots,x_N]_d$ be the $\K$ vector space of homogeneous forms of degree $d$. The standard $\K$-basis of $\K[x_0,\ldots,x_N]_d$ is 
$$\mathcal{B}=\{\displaystyle \prod_{i=0}^Nx_i^{e_i}|\ e_0+\ldots +e_N=d\},$$
yielding the well known formula  $\dim_{\K}\K[x_0,\ldots,x_N]_d = \binom{N+d}{d}$. 
\medskip

\begin{defin}
If $R = \K[x_0,\ldots,x_N]$, we denote by $$Q=\K[X_0, ,\ldots,X_N]$$ the ring of differential operators
on $R$, where $X_i : = \frac{\partial}{\partial x_i}$.

For each $d \geq k\geq 0$ there exist natural $\K$-bilinear maps $R_d \times Q_k \rightarrow R_{d-k}$ defined by differentiation, $(f,\alpha) \mapsto f_{\alpha}:=\alpha(f)$. 
\end{defin}

\begin{rmk}\rm \label{rmk:dualbasis}
Notice that for $\alpha \in Q$ and $f \in R$ we use the two notations $f_{\alpha}$ and $\alpha(f)$ interchangeably meaning the differential operator $\alpha$ acting in $f$.\\

For each $d \geq 1$ the $\K$-bilinear map $R_d \times Q_d \to \K$ is non degenerate, hence there is a natural identification $Q_d \simeq R_d^*$. 
The duality implies that for each set of linearly independent forms $f_1,\ldots,f_s \in R_d$, there are differential operators $\alpha_1,\ldots,\alpha_s \in Q_d$ 
such that $\alpha_i(f_j) = \delta_{ij}$, the Kronecker's delta. 
\end{rmk}

\begin{defin}

Let $f\in R = \K[x_0,\ldots,x_N]$ be a polynomial and let $k\geq 1$. 
If $\mathcal{B}=\{\alpha_1,\ldots,\alpha_{\nu}\} \subset \K[X_0,\ldots,X_N]_k$ is any ordered basis of $Q_k$, $\nu= \nu(N,k)=\binom{N+k}{k}$, we define the $k$th gradient of $f$ with 
respect to the basis $\mathcal{B}$ by $$\bigtriangledown_{\mathcal{B}}^k f=(\alpha_1(f),\ldots,\alpha_{\nu}(f)).$$ If the basis is clear from the context or if it is the standard basis ordered in the 
lexicographical order we put $\bigtriangledown^k f$ to denote the $k$th gradient with respect to thas basis.
\end{defin}

\begin{ex}\rm \label{ex:gradikeda}
Let $g_0=u^3,g_1=u^2v, g_2=uv^2, g_3=v^3 \in \K[u,v]$. We have $\grad^2{g_0}= (6u,0,0)\sim(1,0,0)$, $\grad^2{g_1}= (2v,2u,0)\sim(1,A,0)$, 
$\grad^2{g_2}= (0,2v,2u)\sim(0,1,B)$ and $\grad^2{g_3}= (0,0,6v)\sim(0,0,1)$ with $A,B \in \K(u,v)$. Here $\sim$ denotes parallel vectors over the field of fractions. They are linearly dependent over $\K(u,v)$.  
\end{ex}

\begin{rmk}\rm \label{rmk:key}
We are interested in identifying  sets of linearly independent forms $g_1,\ldots,g_s \in \K[u_1,\ldots,u_m]_{d}$ whose $k$th gradients are linearly dependent over the field of fractions $\K(u_1,\ldots,u_m)$. As we 
will see in the sequel, this construction is related to the vanishing of the $k$th Hessian. 
Notice that given $g_1,\ldots,g_s \in \K[u_1,\ldots,u_m]_d$,  if $$s >\binom{k+m-1}{k}=\dim_{\K} \K[U_1,\ldots,U_m]_k,$$ 
 then the $k$th gradients $\grad^k g_1,\ldots,\grad^k g_s$ are linearly dependent over $\K(u_1,\ldots,u_m)$, the field of fractions.  
\end{rmk}

Let $f \in R = \K[x_0,\ldots,x_N]_d$ be an irreducible homogeneous polynomial of degree $\deg(f)=d \geq 1$ and let 
$Q=\K[X_0,\ldots,X_N]$ be the ring of differential operators. 
%For every $G \in Q$ we shall indicate $G(f) \in R$ the polynomial obtained by applying the differential operator $G$ on $f$. 
%By the duality, if $G \in Q$ is the dual of $\gamma \in R$, that is $G(\gamma) \in \K^*$ some times we use $f_{\gamma}=G(f)$.
We define $$\operatorname{Ann}(f) = \{\alpha \in Q| \alpha(f)=0\}\subset Q.$$
Since $\operatorname{Ann}(f)$ is a homogeneous ideal of $Q$, we can define $$A=\frac{Q}{\operatorname{Ann}(f)}.$$
Therefore $A$ is a standard graded Artinian Gorenstein $\K$-algebra such that $A_j=0$ for $j>d$ and such that $A_d \neq 0$, (\cite[Section 1,2]{MW}). 
We assume, without loss of generality, that $(\operatorname{Ann}(f))_1=0$.

%\begin{defin}
% Let $f \in R_d$ and $A = Q/\operatorname{Ann}(f)$. $d$ is called the socle degree of $A$ and $N+1$ (the number of variables) is called the codimension of $A$ ($\codim (A)$).
%\end{defin}

%\begin{rmk}\rm 

%The hypothesis $(\operatorname{Ann}(f))_1=0$ is equivalent to saying that $\dim A_1 = N+1$. Therefore, $\{X_0,\ldots, X_N\}$ is a basis
%of $A_1$. Since $\operatorname{Ann}(f)$ is an Artinian ideal and $\dim(A_1)=N+1$, we say $\codim A = N+1$. Under this hypothesis it is the codimension of the ideal $\operatorname{Ann}(f) \subset A$.\\
%On the other hand, from a more geometric point of view, the condition $(\operatorname{Ann}(f))_1=0$ is also equivalent to the fact that 
%$V(\sqrt{f}) \subset \P^N$ is not a cone (see \cite[Chapter 7]{Ru}). We recall that a hypersurface is a cone if and only if its partial derivatives 
%are linearly dependent (see \cite{CRS}). Up to a projective transformation it is equivalent to the fact that $f$ does not depend on at least one variable, hence
%we can pass to the basis of the cone. In other words, in $A$ we can take the quotient by (the ideal generated by) the variables that does not appear in $f$. 
%See \cite{Ru,MW,HMMNWW} for more details.
%\end{rmk}

Conversely, by the Theory of Inverse Systems developed by Macaulay, we get the following characterization of standard
Artinian Gorenstein graded $\mathbb K$-algebras. A proof of this result can be found in \cite[Theorem 2.1]{MW} (see also\cite{MS}).

\begin{thm}{\bf \ ( Double annihilator Theorem of Macaulay)} \label{G=ANNF} \\
Let $R = \K[x_0,x_1,\ldots,x_N]$ and let $Q = \K[X_0,\ldots, X_N]$ be the ring of differential operators. 
Let $A= \displaystyle \bigoplus_{i=0}^dA_i = Q/I$ be an Artinian standard graded $\mathbb K$-algebra. Then
$A$ is Gorenstein if and only if there exists $f\in R_d$
such that $A\simeq Q/\operatorname{Ann}(f)$.
\end{thm}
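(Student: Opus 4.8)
The plan is to make everything flow from the apolarity (``contraction'') pairing recalled in Remark \ref{rmk:dualbasis}: for every $j$ the $\K$-bilinear map $Q_j\times R_j\to\K$, $(\alpha,g)\mapsto\alpha(g)$, is non-degenerate, so $Q_j\cong R_j^{*}$; more generally $R$ is a graded $Q$-module and, since differential operators commute, $(\alpha\beta)(g)=\alpha(\beta(g))=\beta(\alpha(g))$. One implication is essentially already recorded in the text preceding the statement: if $f\in R_d$ then $Q_{>d}\subseteq\Ann(f)$ and $(\Ann f)_0=0$, so $A=Q/\Ann(f)$ is standard graded Artinian; the functional $\alpha\mapsto\alpha(f)$ is nonzero on $Q_d$, whence $\dim_{\K}A_d=1$; and non-degeneracy of the pairing in degree $d-i$ forces the multiplication $A_i\times A_{d-i}\to A_d$ to be perfect, so $A$ satisfies Poincaré duality, i.e.\ is Gorenstein (Proposition \ref{G=PD}). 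The substance therefore lies in the converse, and that is where I would put the work.

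So suppose $A=Q/I=\bigoplus_{i=0}^d A_i$ is standard graded Artinian Gorenstein with $A_d\neq 0$. By Poincaré duality $\dim_{\K}A_d=1$; I would fix an isomorphism $\lambda\colon A_d\xrightarrow{\sim}\K$, compose the projection $Q_d\twoheadrightarrow A_d$ with $\lambda$ to obtain a functional on $Q_d$, and transport it through $Q_d^{*}\cong R_d$ to get the unique form $f\in R_d$ characterized by $\alpha(f)=\lambda(\overline{\alpha})$ for all $\alpha\in Q_d$. The claim to prove is then $I=\Ann(f)$, and since both are homogeneous ideals it suffices to compare them in each degree.

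For $I\subseteq\Ann(f)$, take $\alpha\in I_j$: if $j>d$ then $\alpha(f)=0$ for degree reasons; if $j\leq d$, then for every $\beta\in Q_{d-j}$ one has $\beta(\alpha(f))=(\beta\alpha)(f)=\lambda(\overline{\beta\alpha})=0$ since $\beta\alpha\in I$, and non-degeneracy of $Q_{d-j}\times R_{d-j}\to\K$ forces $\alpha(f)=0$. For the reverse inclusion, take a homogeneous $\alpha\in Q_j$ with $\overline{\alpha}\neq 0$ in $A_j$ (necessarily $j\leq d$): if $j=d$ then $\alpha(f)=\lambda(\overline{\alpha})\neq 0$; if $j<d$, Poincaré duality on $A$ yields $\beta\in Q_{d-j}$ with $\overline{\alpha\beta}\neq 0$ in $A_d$, hence $\beta(\alpha(f))=(\alpha\beta)(f)=\lambda(\overline{\alpha\beta})\neq 0$ and so $\alpha(f)\neq 0$. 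Combining the two inclusions gives $I=\Ann(f)$, hence $A\cong Q/\Ann(f)$.

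I do not anticipate a real obstacle: the theorem is a formal consequence of the non-degeneracy of the apolarity pairing together with the Poincaré-duality description of Gorenstein. The only delicate point is keeping straight the two distinct jobs that non-degeneracy does here --- first to \emph{produce} $f$ out of the socle functional $\lambda$, and then, in concert with the Poincaré pairing on $A$, to recover the \emph{whole} ideal $I$ and not merely its top piece $I_d$. The commutativity identity $(\alpha\beta)(f)=\beta(\alpha(f))$ is precisely the bookkeeping device that carries information from degree $d$ down to every degree $j$, and it is what makes it plausible that a single form of degree $d$ can encode the entire algebra.
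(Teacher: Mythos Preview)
Your argument is correct and is essentially the standard proof of Macaulay's double annihilator theorem via the apolarity pairing and Poincar\'e duality. Note, however, that the paper does not supply its own proof of this statement: it simply cites \cite[Theorem 2.1]{MW} and \cite{MS}. Your write-up is precisely the argument one finds in those references (produce $f$ from the socle functional using non-degeneracy of $Q_d\times R_d\to\K$, then recover $I$ in every degree via Poincar\'e duality on $A$ and commutativity of differentiation), so there is nothing to compare beyond observing that you have filled in what the paper chose to outsource.
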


\begin{defin}
 Let $A= \displaystyle \bigoplus_{i=0}^dA_i = Q/I$ be an Artinian Gorenstein $\K$-algebra. The socle degree of $A$ is $d$ which coincides with the degree of the form $f$. 
 The codimension of $A$ is the codimension of the ideal $I \subset Q$ which coincides with its embbed dimension.
\end{defin}

\begin{defin}
 Let $f \in \K[x_0,\ldots,x_N]$ be a homogeneous polynomial, let $A=\frac{Q}{\operatorname{Ann}(f)}$ be the associated Artinian Gorenstein algebra and 
 let $\mathcal{B}=\{\alpha_j|j=1,\ldots,\sigma_k\} \subset A_k$ be an ordered $\K$-basis. The $k$th (relative) Hessian matrix of $f$ with respect to $\mathcal{B}$ is 
 $$\Hess_f^k = (\alpha_i\alpha_j(f))_{i,j}^{\sigma_k}.$$
 The $k$th Hessian of $f$ with respect to $\mathcal{B}$ is
$$\hess_f^k = \det (\Hess_f^k).$$
 \end{defin}

\begin{rmk}\rm The Hessian of order $k=0$ is just $\hess^0_f=f$. 
 The Hessian of order $k=1$ with respect to the standard basis is just the classical Hessian. 
 Although the definition of  Hessians of order $k$ depends on the  choice of a  basis of $A_k$, the vanishing of the $k$th Hessian is independent from this choice.
 More precisely a basis change has the effect of multiplying  the determinant by a non-zero element of the base field $\K$. 
Since we are interested in the vanishing of the Hessian we do not attach the basis in the notation of absolute higher Hessian.   
\end{rmk}

%\subsection{The Lefschetz properties}
 
We now define the Lefschetz Properties (see \cite{HMMNWW}). 

\begin{defin}\label{WLPSLP}
Let $\mathbb K$ be a field and let $$A=\bigoplus_{i=0}^dA_i$$ be an Artinian graded $\mathbb K$-algebra with $A_d\neq 0$.

The algebra  $A$ is said to have {\it the Strong Lefschetz Property}, briefly $SLP$, if there exists an element $L\in A_1$ such that the multiplication map
$$\bullet L^k: A_i\to A_{i+k}$$

is of maximal rank for $0\leq i\leq d$ and $0\leq k\leq d-i$. 

%An element $L\in A_1$ satisfying the previous property will be called {\it a strong Lefschetz element of $A$}.
\medskip

The algebra  $A$ is said to have {\it the Weak Lefschetz Property}, briefly $WLP$, if there exists an element $L\in A_1$ such that the multiplication map
$$\bullet L: A_i\to A_{i+1}$$

is of maximal rank for $0\leq i\leq d-1$.

%An element $L\in A_1$ satisfying the last property will be called {\it a Lefschetz element of $A$}.

%If a graded Artinian algebra A over a field k is generated by A1 as a k- algebra, we say that A has the standard grading. The weak Lefschetz property implies the unimodality of the %Hilbert function, provided that the k-algebra A has the standard grading.
 
 $A$ is said to have {\it the Strong Lefschetz Property in the narrow sense} if there exists an element $L\in A_1$ such that the multiplication map
$$\bullet L^{d-2i} : A_i \to A_{d-i}$$
 is an isomorphism for $i = 0,\ldots,[\frac{d}{2}].$
 \end{defin}

These Lefschetz properties were inspired by the Hard Lefschetz Theorem for the cohomology ring of complex projective manifolds. 
We recall that such  cohomology groups also satisfy the so called Poincaré Duality.

\begin{defin}\label{defArtgrad} Let $\K$ be a  field and let $$A=\bigoplus_{i=0}^dA_i$$ be an Artinian graded $\K$-algebra with $A_0=\mathbb K$ and
$A_d\neq 0$. Let 
$$
\begin{array}{cccc}
\bullet: &A_i\times A_{d-i}&\to& A_d\\
&(\alpha,\beta)&\to&\alpha\bullet \beta
\end{array}
$$
be the restriction of the multiplication in $A$.

We say that $A$ {\it satisfies the Poincaré Duality Property} if:
\begin{enumerate}
\item[(i)] $\dim_{\mathbb K}(A_d)=1;$
\medskip
\item[(ii)] $\bullet :A_i\times A_{d-i}\to A_d\simeq\mathbb K$ is non-degenerate for every $i=0,\ldots,[\frac{d}{2}].$
\end{enumerate}

\end{defin}
\medskip

The algebra $A$ is said to be {\it standard} if 
$A\simeq \frac{\mathbb K[x_0,\ldots, x_N]}{I}$,
as graded algebras, with $I\subset \mathbb K[x_0,\ldots, x_N]$ a homogeneous ideal and the degrees of $x_o, x_1, \ldots, x_N$ are one. 

\begin{Proposition}\label{G=PD}{\rm (\cite{GHMS}, \cite[Proposition 2.1]{MW}), \cite[Proposition 1.4]{MS}} Let $A$ be a graded Artinian $\mathbb K$-algebra.
Then $A$ satisfies the Poincaré Duality Property if and only if it is Gorenstein.
\end{Proposition}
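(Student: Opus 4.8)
The plan is to push the entire equivalence through the socle of $A$; throughout, $A_0=\K$ as in the setup of Definition \ref{defArtgrad}. Write $\mathfrak{m}=\bigoplus_{i\ge1}A_i$ for the graded maximal ideal, and recall the basic characterization that an Artinian graded $\K$-algebra is Gorenstein if and only if its socle $\operatorname{soc}(A)=(0:_A\mathfrak{m})=\{x\in A\mid\mathfrak{m}x=0\}$ is $1$-dimensional over $\K$. I would first record two elementary facts used in both directions: (a) $\operatorname{soc}(A)$ is a graded ideal, so $\operatorname{soc}(A)=\bigoplus_i\operatorname{soc}(A)_i$ with $\operatorname{soc}(A)_i=\{x\in A_i\mid A_jx=0\ \text{for all}\ j\ge1\}$; and (b) since $A_{d+1}=0$, the top piece satisfies $0\ne A_d\subseteq\operatorname{soc}(A)$ automatically. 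Consequently the degree-$d$ part of the statement is, in both directions, just the equivalence between $\dim_\K A_d=1$ and $\operatorname{soc}(A)=A_d$, and the real work is to link the non-degeneracy of the middle pairings with the vanishing of $\operatorname{soc}(A)_i$ for $0\le i<d$.

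For the implication ``Poincaré Duality $\Rightarrow$ Gorenstein'' I would assume $(i)$ and $(ii)$ and show $\operatorname{soc}(A)_i=0$ for $0\le i<d$; combined with $\operatorname{soc}(A)_d=A_d$ and $(i)$ this gives $\dim_\K\operatorname{soc}(A)=1$. Take $0\ne x\in A_i$ with $i<d$. If $i\le[\frac{d}{2}]$, non-degeneracy of $\bullet\colon A_i\times A_{d-i}\to A_d$ provides $y\in A_{d-i}$ with $x\bullet y\ne0$. If $i>[\frac{d}{2}]$, set $j:=d-i\le[\frac{d}{2}]$ and apply non-degeneracy of $\bullet\colon A_j\times A_{d-j}\to A_d$ in its second slot $A_{d-j}=A_i$ to again obtain $y\in A_{d-i}$ with $x\bullet y\ne0$. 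In either case $d-i\ge1$, so $A_{d-i}\subseteq\mathfrak{m}$ and hence $\mathfrak{m}x\ne0$, i.e. $x\notin\operatorname{soc}(A)$. Thus $\operatorname{soc}(A)=A_d$ and $A$ is Gorenstein. The delicate point here is that ``non-degenerate'' must be read in the two-sided sense — both $A_i\to A_{d-i}^{*}$ and $A_{d-i}\to A_i^{*}$ injective — which is precisely what makes the case $i>[\frac{d}{2}]$ reduce to one with index $\le[\frac{d}{2}]$ (and, incidentally, forces $\dim_\K A_i=\dim_\K A_{d-i}$).

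For the converse I would use $\dim_\K\operatorname{soc}(A)=1$ together with $0\ne A_d\subseteq\operatorname{soc}(A)$ to conclude $\operatorname{soc}(A)=A_d$, which is $(i)$. For $(ii)$, fix $i\le[\frac{d}{2}]$ and $0\ne x\in A_i$; I must find $y\in A_{d-i}$ with $x\bullet y\ne0$, and symmetrically starting from a nonzero element of $A_{d-i}$. The key step is the standard fact that a nonzero ideal of an Artinian ring meets the socle: in the nonzero graded ideal $xA$ choose a nonzero homogeneous $y_0$ and the least $k\ge1$ with $y_0\mathfrak{m}^{k}=0$ (which exists because $\mathfrak{m}^{d+1}=0$, and $k\ge1$ because $y_0\ne0$); then any nonzero homogeneous $z\in y_0\mathfrak{m}^{k-1}\subseteq xA$ satisfies $\mathfrak{m}z=0$, so $0\ne z\in xA\cap\operatorname{soc}(A)=xA\cap A_d$. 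Writing $z=xa$ and retaining the degree-$(d-i)$ component of $a$ yields the desired $y\in A_{d-i}$, and the same run of the argument starting from $A_{d-i}$ gives non-degeneracy on the other side, proving $(ii)$. The main obstacle — really the only input that is not grading bookkeeping — is this socle-meets-ideal lemma. I would also note a slicker route for this direction: by Macaulay's Theorem \ref{G=ANNF} write $A=Q/\operatorname{Ann}(f)$ and identify $A_d\simeq\K$ via $\alpha\mapsto\alpha(f)$; then $(\alpha,\beta)\mapsto\alpha\beta(f)$ on $A_i\times A_{d-i}$ is non-degenerate because, if $\alpha\beta(f)=0$ for all $\beta\in Q_{d-i}$, the form $\alpha(f)\in R_{d-i}$ is annihilated by all of $Q_{d-i}$, hence $\alpha(f)=0$ by the apolarity non-degeneracy recalled in Remark \ref{rmk:dualbasis}, i.e. $\alpha=0$ in $A$.
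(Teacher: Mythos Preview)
Your proof is correct and self-contained. Note, however, that the paper does not actually supply a proof of this proposition: it is stated with references to \cite{GHMS}, \cite[Proposition 2.1]{MW}, and \cite[Proposition 1.4]{MS}, and no argument is given in the text. So there is no ``paper's own proof'' to compare against here --- you have in fact provided more than the paper does.

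Your approach via the socle characterization of Gorenstein Artinian local rings is the standard one and matches what one finds in the cited sources. The two directions are handled cleanly: the forward direction is immediate once one reads non-degeneracy two-sidedly, and for the converse the ``nonzero ideal meets the socle'' step is exactly the right engine. Your alternative route through Theorem~\ref{G=ANNF} and the apolarity pairing of Remark~\ref{rmk:dualbasis} is also valid and is closer in spirit to how the paper itself uses these algebras, since everything downstream is phrased in terms of $A=Q/\operatorname{Ann}(f)$ anyway.
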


\begin{defin}
To each Artinian graded $\mathbb K$ algebra $A=\displaystyle \bigoplus_{i=0}^dA_i$ as above, letting $h_i=\dim_{\mathbb K}A_i$,  we can associate its Hilbert vector
$$\operatorname{Hilb}(A)=(1,h_1,\ldots,h_d).$$
Algebras satisfying the Poincaré Duality property have symmetric Hilbert vector, that is, $h_{d-i}=h_i$ for every $i=0,1,\ldots,[\frac{d}{2}]$.
The Hilbert vector of $A$ is said to be unimodal if there exists an integer $t\geq 1$ such that
$$1\leq h_1\leq\ldots\leq h_t\geq h_{t+1}\geq\ldots \geq h_{d-1}\geq 1.$$
\end{defin}
\medskip

The fundamental link between the study of the Lefschetz properties and the higher Hessians is the following Theorem due to Watanabe (see \cite{Wa1,MW}).

\begin{thm}\label{WHessian} {\rm (\cite{Wa1}, \cite{MW})} Let notation be as above.
An element $L = a_0x_0+\ldots+a_Nx_N\in A_1$ is a strong Lefschetz element of $A = Q/\operatorname{Ann}(f)$ if and only if $\hess^k_f(a_0,\ldots, a_N)\neq 0$ for all $k=0,\ldots, [d/2]$.
\end{thm}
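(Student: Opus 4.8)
The plan is to translate the strong Lefschetz condition for $L$ into the non‑degeneracy of a finite list of symmetric bilinear forms on the graded pieces $A_k$, and then to recognize the Gram matrix of the $k$th such form, in any basis of $A_k$, as $\Hess^k_f$ with its entries evaluated at $(a_0,\dots,a_N)$, up to a nonzero constant. Before starting I would fix the identification $A_d\cong\K$ given by $\bar\gamma\mapsto\gamma(f)$ (an isomorphism because $\dim_\K A_d=1$ and $A_d\neq 0$), so that the Poincaré pairing provided by Proposition~\ref{G=PD} becomes the perfect pairing $A_i\times A_{d-i}\to\K$, $(\bar\alpha,\bar\beta)\mapsto(\alpha\beta)(f)$, for $0\le i\le[d/2]$. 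I would also write $\ell=a_0X_0+\cdots+a_NX_N\in Q_1$ for the operator corresponding to $L$.

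First I would reduce the full SLP to its ``narrow sense'' for Gorenstein algebras: for $L\in A_1$, the element $L$ is a strong Lefschetz element if and only if $\bullet L^{\,d-2k}\colon A_k\to A_{d-k}$ is bijective for every $0\le k\le[d/2]$. One direction is immediate, since $\dim_\K A_k=\dim_\K A_{d-k}$ means maximal rank of this particular map is the same as bijectivity. For the converse, assuming all these middle maps bijective, I would factor $\bullet L^{\,d-2k}$ through $\bullet L\colon A_k\to A_{k+1}$ to get injectivity of the latter whenever $2k<d$, then dualize this with respect to the Poincaré pairings — the adjoint of $\bullet L\colon A_i\to A_{i+1}$ is $\bullet L\colon A_{d-i-1}\to A_{d-i}$, by commutativity and associativity of the product — to get surjectivity of $\bullet L\colon A_j\to A_{j+1}$ whenever $2j\ge d$. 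This already forces $\Hilb(A)$ to be unimodal, and then for any $i\le j$ I would factor $\bullet L^{\,j-i}\colon A_i\to A_j$ through a middle map when $i+j\le d$ (obtaining injectivity, which is maximal rank since unimodality gives $\dim A_i\le\dim A_j$ here) and dualize to that case when $i+j\ge d$; this yields maximal rank for every power map.

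Next I would fix $k$ with $0\le k\le[d/2]$ and a $\K$-basis $\alpha_1,\dots,\alpha_{\sigma_k}$ of $A_k$, and consider the symmetric bilinear form $B_k$ on $A_k$ defined by $B_k(\bar\alpha,\bar\beta)=(\alpha\beta\,\ell^{\,d-2k})(f)$; this descends to $A_k$ because $(\Ann f)_k$ lies in its radical. Non-degeneracy of the Poincaré pairing $A_k\times A_{d-k}\to\K$ shows that $B_k$ is non-degenerate exactly when $\bullet L^{\,d-2k}\colon A_k\to A_{d-k}$ is injective, hence (equal dimensions) bijective. On the other hand the Euler/multinomial identity gives $\ell^{\,m}(g)=m!\,g(a_0,\dots,a_N)$ for any $g\in R_m$; applying it to $g=\alpha_i\alpha_j(f)\in R_{d-2k}$ yields
\[
B_k(\alpha_i,\alpha_j)=\ell^{\,d-2k}\!\bigl(\alpha_i\alpha_j(f)\bigr)=(d-2k)!\,\bigl(\alpha_i\alpha_j(f)\bigr)(a_0,\dots,a_N).
\]
So the Gram matrix of $B_k$ in this basis is $(d-2k)!$ times the matrix obtained from $\Hess^k_f$ by evaluating every entry at $(a_0,\dots,a_N)$; since evaluation is a ring homomorphism, $\det B_k=\bigl((d-2k)!\bigr)^{\sigma_k}\hess^k_f(a_0,\dots,a_N)$, which — as $\operatorname{char}\K=0$ — is nonzero if and only if $\hess^k_f(a_0,\dots,a_N)\neq 0$.

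Finally I would string the equivalences together: $L$ is a strong Lefschetz element $\iff$ each $\bullet L^{\,d-2k}\colon A_k\to A_{d-k}$ is bijective for $0\le k\le[d/2]$ $\iff$ each $B_k$ is non-degenerate $\iff$ $\hess^k_f(a_0,\dots,a_N)\neq 0$ for all $0\le k\le[d/2]$ (the case $k=0$ reading $f(a_0,\dots,a_N)\neq 0$, consistent with $\hess^0_f=f$). The hard part is the first step — that bijectivity of the middle maps alone already forces unimodality of the Hilbert vector and maximal rank of all power maps — since the transpose/duality bookkeeping there is where the only real care is needed; steps two and three are then the routine translation of ``isomorphism'' into ``non-vanishing Gram determinant'' together with the Euler identity.
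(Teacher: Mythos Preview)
Your argument is correct, and the three-step structure --- reduce SLP to its narrow form via Poincar\'e duality, identify the Gram matrix of the pairing $(\bar\alpha,\bar\beta)\mapsto(\alpha\beta\,\ell^{\,d-2k})(f)$ on $A_k$ with the evaluated Hessian matrix using the identity $\ell^m(g)=m!\,g(a_0,\dots,a_N)$, then read off the equivalence --- is exactly the standard proof one finds in the references \cite{Wa1,MW}.

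Note, however, that the paper itself does \emph{not} prove Theorem~\ref{WHessian}: it is stated with attribution to Watanabe and Maeno--Watanabe and used as a black box. The only related comment in the paper is the remark immediately following the theorem, which asserts without proof that SLP and SLP in the narrow sense coincide for standard graded Artinian Gorenstein algebras --- precisely your first step. So there is no ``paper's own proof'' to compare against; what you have written is a clean self-contained proof of a result the paper merely quotes. The one place to tighten slightly is the maximal-rank verification for an arbitrary power $\bullet L^{j-i}\colon A_i\to A_j$: rather than ``factor through a middle map when $i+j\le d$,'' it is cleaner to observe that once every $\bullet L\colon A_k\to A_{k+1}$ is injective for $k<d/2$ and surjective for $k\ge d/2$, any such composition is injective when $j\le\lfloor d/2\rfloor$, surjective when $i\ge\lceil d/2\rceil$, and in the mixed case factors as $A_i\hookrightarrow A_{\lfloor d/2\rfloor}\twoheadrightarrow A_j$ (or through $A_{d-j}$), which together with unimodality gives maximal rank.
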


%\begin{proof}(\rm Sketch) Fix $L = a_0X_0+\ldots+a_NX_N \in A_1$. 
%Since $A$ satisfy the Poincaré duality and since the multiplication $\bullet L^{d-2k} : A_k \to A_{d-k}$ is linear, there exists a unique $\K$-bilinear 
%map $\Psi_k:A_k \times A_k \rightarrow \K$ associated to $\bullet L^{d-2k}$. Then one shows that the matrix of $\Psi_k$ with respect to some basis of $A_k$ 
%is just a scalar multiple of the $k$th Hessian matrix with respect to the same basis evaluated at the point $(a_0,\ldots,a_N)$,that is $\Hess^k_f(a_0,\ldots, a_N)$. 
%\end{proof}

\begin{rmk}\rm
It is not difficult to see that for a standard graded Artinian Gorenstein algebra $A = Q/\operatorname{Ann}(f)$ the notion of $SLP$ and $SLP$ {\it in the narrow sense} coincide.   \\
For $N=1$ (codimension $2$) all Artinian graded algebras satisfy the SLP (\cite{HMNW}). Thus all polynomials of degree $d$ in two variables have 
non-vanishing $k$th Hessian for all $k < \frac{d}{2}$. \\ 
For $N=2$ (codimension $3$) it is an open problem to know if the SLP (or the WLP) holds or if there exist Artinian Gorenstein algebra not satisfying the SLP (or the WLP). 
In \cite{BMMNZ} the authors reduced the problem of the WLP to the so called compressed algebras, more precisely they prove that if all compressed standard graded Artinian Gorenstein  
of codimension three satisfy the WLP, then all standard graded Artinian Gorenstein of codimension three satisfy the WLP. 
\end{rmk}

\section{Families of forms having vanishing $k$th Hessian}

The aim of this section is to prove Theorem \ref{cor:question1} which is a generalization of Gordan-Noether Theorem \ref{thm:GN2}. In order to do this we deal with the constructions of two 
families of polynomials having $k$th vanishing Hessian. To construct these families we look for a higher order Gordan-Noether criterion, Proposition \ref{prop:GNcriteria}, 
at least giving a sufficient condition to the vanishing of the $k$th Hessian. \\

The unifying point of view can be summarized in the next Proposition, which is the core of our subsequent constructions. At the best of our  knowledge all known examples of polynomials whose $k$th Hessian vanishes identically, for some $k\geq 1$, either satisfy this property 
up to a linear change of coordinates or $k=1$ and the polynomial is built up with separated variables using Perazzo polynomials , that satisfy this property (c.f. Appendix \ref{ap}, \cite{CRS} or \cite[Chapter 7]{Ru}). 

\begin{prop}\label{prop:key} Let  $R = \K[x_0,\ldots,x_n,u_{1},\ldots,u_m]$ be a polynomial ring in $m+n+1$ variables and   
$Q=\K[X_0,\ldots,X_n,U_{1},\ldots,U_m]$ be the associated ring of differential operators. Let $f \in R_d$ be a form of degree $d$, $k < d/2$ an integer and $A =A(f)= Q/\operatorname{Ann}(f)$.
Suppose that there exists $s$ monomials $\alpha_1, \alpha_2, \ldots, \alpha_s \in Q_k \setminus \K[U_1, \ldots, U_m]_k$ linearly independent in $A_k$ such that $\alpha_i(f) \in \K[u_1, \ldots, u_m]$. 
If $s > \binom{m+k-1}{k}$, then $$\hess^k_f=0.$$

%Set $\deg(f) = d=e+k$, with $e>k \geq 1$ and assume that $\Ann(f)_1=0$. Let $\tilde{R} = \K[u_1,\ldots,u_m]$, $\tilde{Q} = \K[U_1,\ldots,U_m]$ and 
%$B= \tilde{Q}/\operatorname{Ann}(f) \cap \tilde{Q}$.
% Suppose that $\alpha_1,\ldots,\alpha_s \in A_k \setminus B_k$ 
% are linearly independent differential operators such that $f_{\alpha_1},\ldots,f_{\alpha_s} \in \K[u_1,\ldots,u_m]_{e}$. 
% If $s > \binom{m+k-1}{k}$, then $$\hess^k_f=0.$$
\end{prop}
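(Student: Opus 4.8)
The plan is to show that the rows of the $k$th Hessian matrix $\Hess^k_f$, computed with respect to a well-chosen basis of $A_k$, become linearly dependent over the field of fractions $\K(x_0,\ldots,x_n,u_1,\ldots,u_m)$, which forces the determinant $\hess^k_f$ to vanish identically. First I would extend the linearly independent set $\alpha_1,\ldots,\alpha_s$ to an ordered $\K$-basis $\mathcal{B}=\{\alpha_1,\ldots,\alpha_s,\alpha_{s+1},\ldots,\alpha_{\sigma_k}\}$ of $A_k$; since the vanishing of $\hess^k_f$ is independent of the choice of basis (as recalled in the remark following the definition of higher Hessians), it suffices to prove $\det(\Hess^k_{f,\mathcal{B}})=0$. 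The $(i,j)$ entry is $\alpha_i\alpha_j(f)$, and the key point is to analyze the first $s$ rows, i.e. the entries $\alpha_i\alpha_j(f)$ with $1\le i\le s$.

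The crucial observation is that for $1\le i\le s$ we have $\alpha_i(f)\in\K[u_1,\ldots,u_m]$, a polynomial in the $u$-variables only. Hence for any $j$, writing $\alpha_j=\alpha_j(X,U)$ and applying it to $\alpha_i(f)$, only the pure $U$-monomials of $\alpha_j$ of degree $k$ can act nontrivially on $\alpha_i(f)\in\K[u]$; every monomial of $\alpha_j$ involving some $X_\ell$ kills $\alpha_i(f)$. Therefore $\alpha_i\alpha_j(f)=\beta_j\big(\alpha_i(f)\big)$, where $\beta_j\in\K[U_1,\ldots,U_m]_k$ is the "pure-$u$ part" of $\alpha_j$. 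Now consider the $\K(u)$-linear map $\K[U_1,\ldots,U_m]_k\to\K(u)$ given by $\beta\mapsto\beta(g)$; this is evaluation of degree-$k$ differential operators. Applying this to each $\alpha_i(f)$ and each $\beta_j$, I would observe that the $s\times\sigma_k$ submatrix of $\Hess^k_{f,\mathcal{B}}$ consisting of the first $s$ rows factors as a product: an $s\times\binom{m+k-1}{k}$ matrix whose $(i,p)$ entry is $\gamma_p(\alpha_i(f))$ (where $\gamma_1,\ldots,\gamma_{\binom{m+k-1}{k}}$ is the standard monomial basis of $\K[U]_k$) times a $\binom{m+k-1}{k}\times\sigma_k$ matrix of scalars expressing each $\beta_j$ in that basis.

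Since $s>\binom{m+k-1}{k}$, the first factor has rank at most $\binom{m+k-1}{k}<s$ over $\K(u)$, so the $s\times\sigma_k$ top block of $\Hess^k_{f,\mathcal{B}}$ has rank strictly less than $s$ over the field of fractions. Consequently its $s$ rows are linearly dependent over $\K(x,u)$, and the same dependence relation makes the full $\sigma_k\times\sigma_k$ matrix $\Hess^k_{f,\mathcal{B}}$ have dependent rows. Hence $\det(\Hess^k_{f,\mathcal{B}})=0$ identically, i.e. $\hess^k_f=0$. The main subtlety to get right is the factorization of the top block and the verification that monomials of $\alpha_j$ touching the $x$-variables annihilate $\alpha_i(f)$; this uses only that $\alpha_i(f)\in\K[u]$ and the fact that $\partial/\partial x_\ell$ annihilates any polynomial in the $u$'s. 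The hypothesis $\alpha_i\notin\K[U_1,\ldots,U_m]_k$ is not needed for the vanishing argument itself but guarantees the $\alpha_i$ are "genuinely new" relative to the pure-$u$ part of $A_k$, which is what gives the construction geometric content (the hypersurface is not a cone); I would remark on this but the determinant vanishing follows purely from the count $s>\binom{m+k-1}{k}$.
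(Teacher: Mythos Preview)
Your proposal is correct and follows essentially the same approach as the paper: both arguments show that the first $s$ rows of $\Hess^k_f$ are linearly dependent over the fraction field because, for $1\le i\le s$, the entry $\alpha_i\alpha_j(f)=\alpha_j(\alpha_i(f))$ depends only on the pure-$U$ part of $\alpha_j$ acting on $\alpha_i(f)\in\K[u_1,\ldots,u_m]$, and there are only $\binom{m+k-1}{k}<s$ independent such operators. The paper packages this by completing $\{\alpha_1,\ldots,\alpha_s\}$ to a basis whose last block is a basis of the pure-$U$ part $B_k$ of $A_k$, so that the first $s$ rows visibly have the form $(0,\ldots,0,\grad^k\alpha_i(f))$ with zeros outside the $B_k$-columns; you instead leave the basis arbitrary and encode the same constraint as a rank bound via the factorization of the top $s\times\sigma_k$ block through $\K[U]_k$. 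Your observation that the hypothesis $\alpha_i\notin\K[U_1,\ldots,U_m]_k$ is not needed for the determinant to vanish is correct---the paper uses it (together with the monomial assumption) to ensure the $\alpha_i$ and the chosen basis of $B_k$ are jointly independent in $A_k$, i.e.\ to make its particular basis construction go through, whereas your factorization argument sidesteps that issue.
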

\begin{proof} Let $\tilde{Q} = \K[U_1,\ldots,U_m]$ and $B= \tilde{Q}/\operatorname{Ann}(f) \cap \tilde{Q}$.
Let us construct a monomial ordered basis of $A_k$, $$\mathcal{A} = \{\alpha_1, \ldots, \alpha_s, \gamma_1, \ldots, \gamma_l, \beta_1, \ldots, \beta_r\}$$ The first $s$ vectors are $\alpha_1,\ldots,\alpha_s$, 
 and the last vectors $\{\beta_1,\ldots,\beta_r\}$ consist of a basis $\mathcal{B}$ of $B_k$.  
 Let $$\grad^k \alpha_i(f)=(\beta_1(\alpha_i(f)), \ldots,\beta_r(\alpha_i(f))) = (\alpha_i(\beta_1(f)), \ldots,\alpha_i(\beta_r(f)))$$ be the gradient of $\alpha_i(f)$ with respect to the basis $\mathcal{B}$. 
 %Since $\alpha_i(f) \in \K[u_1,\ldots,u_m]$ 
 For $i=1,\ldots,s$, the first $s$ rows of $Hess^k_f$ are  
 $$L_i=(0,\ldots,0,\grad^k \alpha_i(f)).$$
 Indeed, for any differential operator $\delta \in \{\alpha_1, \ldots, \alpha_s, \gamma_1, \ldots, \gamma_l\}$ it depends on some of the variables $X_0, \ldots,X_n$. Since $\alpha_i(f) \in \K[u_1,\ldots,u_m]$, we get 
 $\alpha_i(\delta(f))=\delta(\alpha_i(f))=0$.

 By hypothesis, $s>\binom{m-1+k}{k}=\dim \K[U_1,\ldots,U_m]_k$, hence the $k$th gradients of the $\alpha_i(f)$, $\grad^k \alpha_1(f),\ldots,\grad^k \alpha_s(f)$, are linearly dependent 
 over $\K(u_1,\ldots,u_m)$. Therefore  $L_1,\ldots,L_s$ are linearly dependent over $\K(x_0,\ldots,x_n,u_1,\ldots,u_m)$ yielding $\hess^k_f=0$. 
 In fact, we can think on $\Hess^k_f$ as a matrix with entries in $\K(x_0,\ldots,x_n,u_1,\ldots,u_m)$, thus a necessary and sufficient condition for the vanishing 
 of the Hessian is actually the linear dependence among its rows over the field of fractions $\K(x_0,\ldots,x_n,u_1,\ldots,u_m)$.
\end{proof}

%These polynomials have vanishing second Hessian and non-vanishing first Hessian. Therefore they produce examples of standard graded Artinian Gorenstein 
%algebras not satisfying the Strong Lefschetz property, see {\it loc. cit.} and Corollary \ref{WHessiancor} here.\\

%Now  at least  three questions naturally arise.

%\begin{question}\rm 
% Are there such hypersurfaces for each $N,d,k$?
%\end{question}

%\begin{question}\rm 
%If the first question has an affirmative answer for certain $N,k,d$, are these examples isolated points in a natural parameter space or do they vary in 
%families?  
%\end{question}	

%\begin{question}\rm 
% If the first question has an affirmative answer for certain $N,d,k$, can we classify all these hypersurfaces with vanishing $k$th Hessian 
% (up to a linear change of coordinates)?
%\end{question}

%The goal of this work is to give an affirmative and algorithmic answer to the first question for $N \geq 3$ and for arbitrary degree $d > 2k$ as well a 
%positive answer to the second one constructing two positive dimensional distinct families for $N \geq 4$ and arbitrary $d>2k$. 

%The third question seems to be much more difficult in general, although it has some affirmative answers depending on the  the classification of some classes of hypersurfaces
%with vanishing Hessian.

Let us revisit an example due to Ikeda \cite{Ik} and \cite{MW} from the previous perspective.

\begin{ex} \rm \label{ex:Ikeda}
 Let $f=x_0u_1^3u_2+x_1u_1u_2^3+x_0^3x_1^2 \in \K[x_0,x_1,u_1,u_2]_5$. According to the previous notation we have $m=2$, $k=2,e=3,d=5$.  
 For this polynomial we have $\hess_f \neq 0$ and $\hess^2_f=0$. \\
 Indeed, it is easy to see that the first partial derivatives of $f$ are linearly independent, so by  Theorem \ref{thm:GN}, $\hess_f\neq 0$. 
 On the other hand, $f_{X_0U_2}=u_1^3,f_{X_0U_1}=3u_1^2u_2,f_{X_1U_2}=3u_1u_2^2, f_{X_1U_1}=u_2^3 \in \K[u_1,u_2]$. Therefore, $\hess^2_f=0$,  by Proposition \ref{prop:key}. 
Indeed, $\dim A_2 =10ky$ and following the strategy of Proposotion \ref{prop:key}, we can choose an ordered basis for $A_2$ starting with $\{X_0U_2,X_0U_1,X_1U_2,X_1U_1\}$ and 
completed by $\mathcal{B} = \{U_1^2, U_1U_2,U_2^2\}$. For instance we can choose the basis $\{X_0U_2,X_0U_1,X_1U_2,X_1U_1,X_0^2,X_0X_1,X_1^2,U_1^2, U_1U_2,U_2^2\}$.
Let us denote $f_1 = f_{X_0U_2}=u_1^3,f_2=f_{X_0U_1}=3u_1^2u_2,f_3=f_{X_1U_2}=3u_1u_2^2, f_4=f_{X_1U_1}=u_2^3$. 
%The second Hessian matrix of $f$ with respect to this basis is the following $10 \times 10$ matrix: 

%$$\Hess^2_f = \left[ \begin{array}{ccccccccccc}
%
%                                       0&0&0&0&0&0&0& \grad^2f_1\\
 %                                      0&0&0&0&0&0&0& \grad^2f_2\\
  %
%                                     0&0&0&0&0&0&0& \grad^2f_3\\
 %                                      0&0&0&0&0&0&0& \grad^2f_4\\
  %                                     *&*&*&*&*&*&*&* \\
   %                                    \vdots & \vdots & \vdots & \vdots & \vdots & \vdots & \vdots & \vdots  \\
   %                                    *&*&*&*&*&*&*&*
    %                                   \end{array} \right]
    %                                                      = \left[ \begin{array}{ccccccccccccc}
 %                                      0&0&0&0&0&0&0& 6u_1 & 0 & 0 \\
 %                                      0&0&0&0&0&0&0& 6u_2 & 6u_1 & 0\\
 %                                      0&0&0&0&0&0&0& 0 & 6u_2 & 6u_1 \\
 %                                      0&0&0&0&0&0&0& 0 & 0 & 6u_2\\
 %                                      *&*&*&*&*&*&*&*&*&* \\
 %                                      \vdots & \vdots & \vdots & \vdots & \vdots & \vdots & \vdots & \vdots & \vdots & \vdots  \\
 %                                      *&*&*&*&*&*&*&*&*&* \\
 %                                      \end{array} \right]. $$
 
 Since $s=4>3\binom{2+2-1}{1}$, by Proposition \ref{prop:key} $\hess^2_f  = 0$.\\

% Notice that the monomial term $y^3z^2$ in $f$ is not necessary in the construction, one could consider $f=x_0u_1^3u_2+x_1u_1u_2^3$. 
\end{ex}

The first family we construct is a generalization of Ikeda's Example.\\

Let $M_1, \ldots, M_n$ be monomials in $\K[u,v]_{d-1}$. Let $\mathbb{V} = \K[U,V]_{k-1}\{M_1, \ldots, M_n\}$ be the vector space spanned by the $(k-1)$th derivatives of the monomials 
$M_i$ with respect to $u,v$. 
% Set $ n \geq 2$ and $2 \leq k \leq \frac{d}{2}$. Consider a form  
%\begin{equation} \label{eq:exceptional} f=\displaystyle \sum_{i=2}^nx_iu^{t_i}v^{d-1-t_i}+h(x_2,\ldots,x_n) \in \K[u,v,x_2,\ldots,x_n]_d.\end{equation}
% Where $0 \leq t_i \leq d-1$ are distinct integers. Let $A=Q/\Ann(f)$ and suppose that $\Ann(f)_1=0$, that is $\dim A_1 = n+1$. Let $m_i : = \min\{t_i,d-1-t_i\}$ and suppose, without loss of generality, that 
% $m_1 \geq m_2 \geq \ldots \geq m_n \geq 0$. In this way, there is $l \geq 1$ such that $k -1\leq m_i$ for  $i=2,\ldots,l$ and $k -1> m_i$ for $i>l$. Suppose that $m_i=t_i<k-1 \leq d-1-t_i$ for $i=l+1,\ldots,n$. \\
%
% Let $\tilde{\Lambda} =\{(i,j)|\ \text{for} \ i=2,\ldots,l; j=0,\ldots,k-1 \ \text{and for}\ i=l+1,\ldots,n; j=0,\ldots,m_i \}$. For any $(i,j) \in \tilde{\Lambda}$, the differentials  
% $\alpha_{ij} = X_iU^jV^{k-1-j} \in A_k $ satisfy $f_{\alpha_{ij}}=c_{ij}u^{t_i-j}v^{d-t_i-k+j}$ with $c_{ij} \neq 0$. Let $\Gamma =\{  t_i-j | (i,j) \in \tilde{\Lambda}\}$. Since the $f_{\alpha_{ij}}$ are given by monomials of same degree, 
% if we choose a minimal set $\Lambda = \{(i,j) \in \tilde{\Lambda}| t_i - j \in \Gamma \}$,  then the set of differentials $\{\alpha_{ij}|(i,j) \in \Lambda\}$ are linearly independent in $A_k$. 
%The above construction motivates the following. 
 
\begin{defin} With the previous notation, if $\dim \mathbb{V} > k+1$, then a plolynomial of the form $f = \sum x_i M_i + h(x) \in \K[u,v,x_1,\ldots,x_n]$ is called an exceptional polynomial of degree $d$ 
and order $k$. 
\end{defin}

\begin{thm} \label{cor:question1} For each $n \geq 3$, for each $d \geq 5$ and for $2\leq k < \frac{d}{2}$ there exist irreducible exceptional polynomials $f \in \K[u,v,x_2,\ldots,x_n]$ 
of degree $\deg(f) = d$ such that $$\hess_f\neq 0\ \text{and}\  \hess_f^r=0 \ \text{for}\ r=2,\ldots,k.$$
Furthermore, if $k+1 \leq \frac{d}{2}$, then $\hess_f^{k+1}\neq 0$.
\end{thm}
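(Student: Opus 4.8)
The plan is to reduce everything to Proposition \ref{prop:key} for the vanishing statements, and to a direct rank computation for the non-vanishing claims. First I would construct an explicit exceptional polynomial. Choose $M_1 = u^{d-1}$, $M_2 = u^{d-k}v^{k-1}$, $M_3 = v^{d-1}$ (or, more symmetrically, a set of $n \geq 3$ monomials whose $(k-1)$th derivatives span a space $\V$ of dimension strictly larger than $k+1$; this is where the hypothesis $n \geq 3$, $d\geq 5$ and $2 \leq k < d/2$ enters, guaranteeing enough ``spread'' among exponents). Set $f = \sum_{i=1}^n x_i M_i + h(x)$ where $h(x) \in \K[x_1,\dots,x_n]_d$ is a generic form chosen so that $f$ is irreducible and not a cone — irreducibility is cheap since $f$ genuinely involves all of $u,v,x_1,\dots,x_n$, and non-coneness follows because the partials $f_{x_i} = M_i + h_{x_i}$ together with $f_u, f_v$ span a space of dimension $n+2$, so $V(f)$ has no vertex.

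For $\hess_f \neq 0$: by Theorem \ref{thm:GN} it suffices that the first partial derivatives of $f$ be linearly independent, which holds by the previous paragraph. For $\hess_f^r = 0$ with $2 \leq r \leq k$: I would apply Proposition \ref{prop:key} with the two groups of variables being $u,v$ (playing the role of $u_1,\dots,u_m$, so $m=2$) and $x_1,\dots,x_n$ (the role of $x_0,\dots,x_n$). The operators $\alpha_j$ are monomials of the form $X_i \cdot (\text{a monomial of degree } r-1 \text{ in } U,V)$; applying such an $\alpha_j$ to $f$ kills $h(x)$ and all cross terms $x_{i'}M_{i'}$ with $i' \neq i$, leaving an $(r-1)$th derivative of $M_i$, which lies in $\K[u,v]$. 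The number $s$ of linearly independent such operators in $A_r$ equals $\dim \V_r$ where $\V_r = \K[U,V]_{r-1}\{M_1,\dots,M_n\}$; since $\dim \V_r \geq \dim \V > k+1 \geq r+1 = \binom{2+r-1}{r}$ (the last inequality needs $r \leq k$ and monotonicity of these spaces, which I would check by noting that taking one more derivative of a monomial of large degree does not decrease the span as long as we stay below the relevant thresholds), Proposition \ref{prop:key} gives $\hess_f^r = 0$.

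The non-vanishing of $\hess_f^{k+1}$ when $k+1 \leq d/2$ is the genuine obstacle and requires the finest choice of data. The idea is that at level $k+1$ the relevant space $\V' = \K[U,V]_k\{M_1,\dots,M_n\}$ now has dimension $\leq k+2 = \binom{2+k}{k}$, so the ``automatic degeneracy'' argument of Proposition \ref{prop:key} no longer applies; one must instead exhibit a strong Lefschetz-type element at that single step, i.e.\ show that for a suitable specialization $(a) = (a_u, a_v, a_1,\dots,a_n)$ the matrix $\Hess^{k+1}_f(a)$ is nonsingular. Here I would pick the monomials $M_i$ and especially the polynomial $h(x)$ so that the block of $\Hess^{k+1}_f$ coming from pure $x$-operators is controlled by $h$ alone — choosing $h$ to be a power sum or a product making $\K[x]/\Ann(h)$ have SLP — while the mixed and pure-$u,v$ blocks contribute independently; a block-triangular or Schur-complement argument then reduces the determinant to a product of nonzero factors (one factor being essentially $\hess^{k+1-?}_h$ evaluated generically, another being a Vandermonde-type minor built from the derivatives of the $M_i$ at a general point of the $(u,v)$-line). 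Making these blocks genuinely decouple — i.e.\ verifying that the relevant rows of $\Hess^{k+1}_f$ are not forced into a linear relation over $\K(u,v,x)$ — is the crux, and I would handle it by a semicontinuity argument: it is enough to find one numerical specialization of the coefficients of $h$ and one point $(a)$ making the determinant nonzero, since $\hess^{k+1}_f \not\equiv 0$ is an open condition on the construction data.
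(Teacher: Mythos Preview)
Your argument for $\hess_f \neq 0$ contains a genuine error. You invoke Theorem~\ref{thm:GN} to conclude that linear independence of the first partials implies $\hess_f \neq 0$, but that theorem applies only when $N \leq 3$, i.e.\ at most four variables. Your polynomial lives in $\K[u,v,x_1,\dots,x_n]$ with $n \geq 3$, so at least five variables, and in that range linear independence of the partials (not being a cone) does \emph{not} imply nonvanishing Hessian --- indeed, producing such examples is exactly the content of Theorem~\ref{thm:GN2}. The paper circumvents this by taking $f = \tilde f(u,v,x_2,x_3) + p(x_4,\dots,x_n)$ so that $\Hess_f$ is block-diagonal; then $\tilde f$ lives in four variables where Gordan--Noether does apply, and $\hess_p \neq 0$ for generic $p$.

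There are two further gaps. First, your concrete choice $M_1 = u^{d-1}$, $M_2 = u^{d-k}v^{k-1}$, $M_3 = v^{d-1}$ is not exceptional: the $(k-1)$th derivatives of $M_2$ already contain $u^{d-k}$ (duplicating the contribution of $M_1$), so $\dim \V = k+1$, not $> k+1$. Second, your reduction of $\hess_f^r = 0$ for $r < k$ to the single inequality $\dim \V_r \geq \dim \V_k$ is unjustified; that monotonicity can fail (taking one further derivative of a single binary monomial can strictly enlarge the span). The paper instead works with two carefully chosen monomials $M_2 = u^{k-1}v^{d-k}$, $M_3 = u^{d-2}v$ and, for each $r \leq k$ separately, writes down $r+2$ explicit operators $\alpha_0,\dots,\alpha_{r-1},\beta,\gamma$ whose images are distinct monomials in $\K[u,v]_{d-r}$, then applies Proposition~\ref{prop:key} directly.

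For $\hess_f^{k+1} \neq 0$ your sketch is in the right spirit but far from a proof; the paper again uses the block-diagonal splitting $f = g + h + p$ to reduce to $\hess_g^{k+1} \neq 0$, and then exhibits an explicit ordered basis of $A(g)_{k+1}$ in which $\Hess_g^{k+1}$ is block-diagonal with two anti-triangular blocks having nonzero anti-diagonals. No genericity or semicontinuity argument is needed once the right $g$ is fixed.
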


\begin{proof}
Consider $f=g(u,v,x_2,x_3)+h(x_2,x_3)+p(x_4,\ldots,x_n)$ with $g=x_2u^{k-1}v^{d-k}+x_3u^{d-2}v$, and let $h$ and $p$ be chosen to make $f$ irreducible.
Let $\tilde{f}=g+h$, and consider $\tilde{X} \subset \P^3$. For a general $h$ one can check that $\tilde{f}$ does not define a cone in $\P^3$, 
since its first partial derivatives are linearly independent. By Gordan Noether Theorem,  Theorem \ref{thm:GN} here, we have $\hess_{\tilde{f}} \neq 0$.
Notice that $$\Hess_f = \left[ \begin{array}{cc}
                                        \Hess_{\tilde{f}} & 0\\
                                        0 & \Hess_p
                                       \end{array} \right]. $$
Since $\hess_{\tilde{f}}\neq 0$ and $\hess_p \neq 0$ for general $p$, one concludes that $\hess_f \neq 0$ for a general $f$ of this type.\\                                       
On the other hand, for each $r \leq k$ we consider $\alpha_j=X_2U^{r-1-j}V^j$ with $j=0,\ldots,r-1$. Thus $f_{\alpha_j} = a_ju^{k-r+j}v^{d-k-j}\in \K[u,v]_{d-r}$. 
Consider $\beta=X_3U^{r-2}V$ and $\gamma=X_3U^{r-1}$ so that $f_{\beta}=bu^{d-r},f_{\gamma}=cu^{d-r-1}v \in \K[u,v]_{d-r}$. 
Recall that the linear independence of $\{\alpha_0, \ldots, \alpha_{r-1}, \beta, \gamma\}\subset A_r$ is equivalent to the linear independence of the derivatives 
$\{f_{\alpha_0}, \ldots, f_{\alpha_{r-1}}, f_\beta, f_\gamma\}\subset \K[u,v]_{d-r}$.
To conclude their linear independence it is enough to verify that neither $f_{\beta} $ nor $f_{\gamma}$ is a scalar multiple of $ f_{\alpha_j}$ for $j=0,\ldots,r-1$. 
If this were the case, one would deduce either $j=d-k \leq r-1$, yielding $ d < d-1$, or $j=d-k-1$ implying $d < d$ and one would get a contradiction in both cases.

Since $\dim \K[U,V]_r=r+1$ and since we  found $r+2$ linearly independent differentials $\{\alpha_0,\ldots,\alpha_{r-1},\beta,\gamma\} \subset A_r$, we get $\hess^r_f=0$ by 
Proposition \ref{prop:key}.

To conclude the proof we must show that $\hess^{k+1}_f \neq 0$ for the general $f$ if $j+1\leq\frac{d}{2}$. 
Consider $f=g+h+p$, then 
$$\Hess^{k+1}_f = \left[ \begin{array}{ccc}
                                        \Hess_g^{k+1} & 0 & 0\\
                                        0 & \Hess_h^{k+1} & 0\\
                                        0& 0 & \Hess_p^{k+1}
                                       \end{array} \right]. $$
Since $\hess^{k+1}_h\neq 0$ and $\hess_p^{k+1}\neq 0$ for general $h,p$, it is enough to prove that $\hess_g^{k+1} \neq 0$. 
Let $Q = \K[U,V,X_2,X_3]$ be the ring of differential operators and consider $A = Q/(\operatorname{Ann}(g))$ for $g=u^{k-1}x_2v^{d-k}+vx_3u^{d-2}$.
Notice that $\dim A_{k+1}=2k+4$ since a ordered $\K$-basis for $A_{k+1}$ is\\
$$\mathcal{B} = \{\alpha_1,\alpha_2,\alpha_3,\alpha_4,\beta_0 ,\gamma_0,\beta_1,\gamma_1,\ldots \beta_i,\gamma_i,\ldots, \beta_{k-1},\gamma_{k-1}\}.$$

Where $$\alpha_1=U^{k+1}$$ $$\alpha_2=U^kX_3$$ $$\alpha_3=U^kV$$ $$\alpha_4=U^{k-1}VX_3$$ And for $i=0, \ldots,  k-1$.

$$\beta_i=V^{k+1-i}U^i$$ $$\gamma_i=V^{k-i}X_2U^i$$

The matrix $\Hess^{k+1}_g$ can be partitioned in blocks, induced by the partition of the basis $\mathcal{B}$ by choosing the first four vectors $\{\alpha_1,\alpha_2,\alpha_3,\alpha_4\}$ 
and the $2k$ other ones.  
$$\Hess^{k+1}_g = \left[ \begin{array}{cc}
                                        \varTheta_{4 \times 4} & 0_{4 \times 2k }\\
                                        0_{2k \times 4} & \Delta_{2k \times 2k}
                                       \end{array} \right]. $$
The zero in the block anti-diagonal follows from $\alpha_i\beta_j=U^{k+j-i+2}V^{k+1-j}X_3^{i-1} \in \operatorname{Ann}(g)$ and \\ 
$\alpha_i \gamma_j = U^{k+j-i+2}V^{k-j}X_2X_3^{i-1} \in \operatorname{Ann}(g)$ 
for every $i=1,2,3,4$ and $j=0,\ldots,k-1$. \\
We claim that $$\varTheta_{4 \times 4} = \left[ \begin{array}{cccc}
                                        \ast & \ast & \ast & \ast \\
                                        \ast & \ast & \ast & 0 \\
                                       \ast & \ast & 0 & 0 \\
                                       \ast & 0 & 0 & 0
                                       \end{array} \right]. $$
With the elements of the off diagonal non-zero, hence, $\det \varTheta \neq 0$. Indeed the elements of the off diagonal are, $\alpha_1\alpha_4=\alpha_2\alpha_3=U^{2k}VX_3 \not \in \operatorname{Ann}(g)$ and 
the elements of the lower triangle $\alpha_2\alpha_4,\alpha_3^2,\alpha_3\alpha_4,\alpha_4^2 \in \operatorname{Ann}(g)$. \\
In the same way $$\Delta_{2k \times 2k} = \left[ \begin{array}{cccc}
                                        \ast & \ast & \ldots & \ast \\
                                        \ast & \ldots & \ast & 0 \\
                                       \ldots & \ldots & 0 & 0 \\
                                       \ast & 0 & \ldots & 0
                                       \end{array} \right]. $$
In fact, the off lower triangle is zero since $\beta_i\gamma_j = U^{i+j}V^{2k+1-i-j}X_2 \in \operatorname{Ann}(g)$ if $i+j>k-1$. 
On the contrary, the elements of the off diagonal are non-zero, because
they are $\beta_i \gamma_{k-1-i}=V^{k+2}U^{k-1}X_2 \not \in \operatorname{Ann}(g)$. Therefore $\det \Delta \neq 0$ and the result follows.                                    
\end{proof}

%\begin{ex}\rm Consider the form $$f = x^ku^d+x^{k-1}yu^{d-1}v+\ldots+y^ku^{d-k}v^k+z^ku^{d-k-1}v^{k+1}+h(u,v) \in \K[u,v,x,y,z]$$
 %with $u \not | h(u,v)$. It is easy
 %  to see that $f$ is an irreducible polynomial of degree $D=k+d$ such that 
 %$$f_{x^k}, f_{x^{k-1}y},\ldots,f_{y^k},f_{z^k} \in \K[u,v]$$
 %Since $\dim \K[u,v]_k=k+1$ and we have $k+2$ linearly independents $k$th partial derivatives, by Proposition \ref{prop:key} 
 %$\hess_f^k=0$.\\
 %\end{ex}

\begin{rmk}\rm We want to stress that, 
by Gordan-Noether's Theorem, this class of forms with vanishing higher Hessians starts with four variables which does not occurs for the classic Hessian (c.f. Theorem \ref{thm:GN}). So, exceptional forms are actually of different nature and not associated to Gordan-Noether's construction.
\end{rmk}
 
The second family we construct was inspired by the Gordan-Noether's and Perazzo's polynomials. They are called GNP-polynomials of type $(m,n,k,e)$ ( see Proposition \ref{prop:formacanonica}). 
They are a natural generalization of Perazzo's polynomials; for instance, any GNP-polynomial of type $(m,n,1,e)$ is a Perazzo polynomial (c.f. \cite{Pe, GRu}). 
They are also a generalization of some special cases of GN polynomials, more precisely, the case $\mu=1$ in Definition \ref{def:GN} and 
the general case, assuming $P_j=0$ for $j \neq 0, \mu$ (c.f. \cite{CRS} or \cite[Chapter 7]{Ru}). GNP-polynomials also generalize some examples due to Maeno and Watanabe, (c.f. \cite[p.10, Example 5.1]{MW} and \cite[p. 11, Example 5.2]{MW}).  

\begin{prop} \label{prop:formacanonica} Let $x_0,\ldots,x_n$ and $u_1,\ldots,u_m$ be independent sets of indeterminates with $n+1 \geq m \geq 2$. 
For $j=1,\ldots,s$, let $f_j \in \K[x_0,\ldots,x_n]_k$ and $g_j\in \K[u_1,\ldots,u_m]_e$ be linearly independent forms with $1\leq k < e $. 
%If the $k$th gradients of $g_j$, $\grad^kg_1,\ldots,\grad^kg_s$ are linearly dependent over $\K(u_1,\ldots,u_m)$ 
If $s>\binom{m-1+k}{k}$, then the form of degree $d=e+k$ given by 
\begin{equation}\label{GNP}
f=f_1g_1+\ldots+f_sg_s 
\end{equation}
satisfies  
$$\hess^k_f=0.$$
Let $A=Q/\Ann(f)$ with $f$ of type \ref{GNP} and suppose that $\dim A_1 = m+n+1$. If $\hess^k_f=0$ then $f$ is called a $GNP$-polynomial of type $(m,n,k,e)$. 

\end{prop}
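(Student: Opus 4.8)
The plan is to reduce the statement to Proposition \ref{prop:key} by exhibiting, for the form $f=f_1g_1+\ldots+f_sg_s$, a collection of $s$ monomial differential operators that behave exactly as required by that proposition: linearly independent in $A_k$, lying in $Q_k\setminus\K[U_1,\ldots,U_m]_k$, and sending $f$ into $\K[u_1,\ldots,u_m]$. The natural candidates are differential operators $\alpha_i\in\K[X_0,\ldots,X_n]_k$ chosen dual to the forms $f_1,\ldots,f_s\in\K[x_0,\ldots,x_n]_k$, in the sense of Remark \ref{rmk:dualbasis}: since the $f_j$ are linearly independent in $R_k=\K[x_0,\ldots,x_n]_k$, there exist $\alpha_1,\ldots,\alpha_s\in\K[X_0,\ldots,X_n]_k$ with $\alpha_i(f_j)=\delta_{ij}$.

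First I would compute $\alpha_i(f)$. Because $\alpha_i$ involves only the $X_0,\ldots,X_n$ and the $g_j$ involve only the $u$'s, we have $\alpha_i(f_jg_j)=\alpha_i(f_j)\,g_j$, so $\alpha_i(f)=\sum_j\alpha_i(f_j)g_j=g_i\in\K[u_1,\ldots,u_m]_e$. This is precisely the hypothesis $\alpha_i(f)\in\K[u_1,\ldots,u_m]$ of Proposition \ref{prop:key}. Next I would check $\alpha_i\notin\K[U_1,\ldots,U_m]_k$: indeed $\alpha_i$ is a polynomial in $X_0,\ldots,X_n$ only, and it is nonzero (since $\alpha_i(f_i)=1$), so it is not even in the subring generated by the $U$'s — here we use that the $x$'s and $u$'s are genuinely separate variables. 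Then I would verify that $\alpha_1,\ldots,\alpha_s$ remain linearly independent in $A_k=Q_k/\Ann(f)_k$: if $\sum c_i\alpha_i\in\Ann(f)$ then $0=\sum c_i\alpha_i(f)=\sum c_ig_i$, and since the $g_i$ are linearly independent we get all $c_i=0$. Finally, since by hypothesis $s>\binom{m-1+k}{k}$, Proposition \ref{prop:key} applies verbatim (with the roles of $n,m$ there matching ours) and yields $\hess^k_f=0$. The last sentence of the statement is just a definition, so nothing is to be proved there.

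I do not expect a serious obstacle: the proof is a direct unwinding of the duality in Remark \ref{rmk:dualbasis} plus a citation of Proposition \ref{prop:key}. The only point requiring a moment's care is the choice of the dual operators $\alpha_i$ — one should note that Remark \ref{rmk:dualbasis} gives operators in the full ring $Q_k$ acting on $R_k$, but since both the $f_j$ and the needed operators can be taken in the $x$-variables alone (the pairing $\K[x]_k\times\K[X]_k\to\K$ is already nondegenerate), we may and do take $\alpha_i\in\K[X_0,\ldots,X_n]_k$. A secondary subtlety is that $A=Q/\Ann(f)$ is built from the full $(m+n+1)$-variable ring, so one must confirm the $\alpha_i$ are independent modulo $\Ann(f)$ in that larger algebra, which is exactly the short computation above using the independence of the $g_i$. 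With these observations in place the argument is complete.
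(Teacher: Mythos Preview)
Your proof is correct and follows essentially the same route as the paper: choose operators $\alpha_i$ dual to the $f_j$ (so $\alpha_i(f_j)=\delta_{ij}$ and hence $\alpha_i(f)=g_i\in\K[u_1,\ldots,u_m]$), check their independence in $A_k$ via the independence of the $g_i$, and invoke Proposition~\ref{prop:key}. You are in fact slightly more explicit than the paper in two places: you specify $\alpha_i\in\K[X_0,\ldots,X_n]_k$ (using the nondegeneracy of the pairing restricted to the $x$-variables), and you spell out the independence check in $A_k$. One cosmetic remark: you announce ``monomial'' operators in your opening sentence, but the dual operators you actually use need not be monomials; since the paper's own proof of Proposition~\ref{prop:formacanonica} also applies Proposition~\ref{prop:key} to these same non-monomial duals, this is not a defect of your argument relative to the paper --- the proof of Proposition~\ref{prop:key} goes through verbatim once the $\alpha_i$ lie in $\K[X_0,\ldots,X_n]_k$, which you have arranged.
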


\begin{proof} Let $R = \K[x_0,\ldots,x_n,u_1,\ldots,u_m]$,
let $Q=\K[X_0,\ldots,X_n,U_1,\ldots,U_m]$ be the associated ring 
of differential operators and let $A =A(f)= Q/\operatorname{Ann}(f)$ be the associated Artinian Gorenstein algebra.

 Consider a basis of $A_k$ whose first $s$ vectors $\alpha_1,\ldots,\alpha_s$ are the dual of $f_1,\ldots,f_s$ in the sense of Remark \ref{rmk:dualbasis}, that is 
 $\alpha_i(f_j)=\delta_{ij}$. Notice that $\alpha_j(f)= g_j \in \K[u_1,\ldots,u_m]$ for $j=1,\ldots,s$, and that by hypothesis $s>\binom{m-1+k}{k} = \dim \K[U_1,\ldots,U_m]_k$.
  Thus $\hess^k_f=0$ by Proposition \ref{prop:key}.
\end{proof}
\medskip
 
Now we prove the existence of families of GNP-polynomials of type $(m,n,k,e)$ for every codimension $N+1 = m+n+1 \geq 5$ and for every degree $d= e+k \geq 3$. 
Our strategy is to determine the possible values of $\dim A_1$ for GNP-hypersurfaces of type $(m,n,k,e)$ with $m\geq 2$. Since we deal with a separation of the set of variables in two subsets with 
different roles, we call the $u_1,\ldots,u_m$ essential variables and $x_0,\ldots,x_n$ superfluous variables. 
\medskip

\begin{defin}\label{defin:Am}
 Set $$\mathcal{A}_{k,e}^m=\{\dim A_1| A = Q/\Ann(f), f \ \text{is a GNP-polynomial of type}\ (m,n,k,e)\}.$$
 Denote $a_m = a_m(k,e) = \min \mathcal{A}_{k,e}^m$ and $b_m = b_m(k,e) = \max \mathcal{A}_{k,e}^m$.
\end{defin}
\medskip

\begin{lema} \label{lema:m=2}
 $$\mathcal{A}^2_{k,e}=\{5,6,\ldots,e+3\}$$
\end{lema}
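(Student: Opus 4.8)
The plan is to compute $\mathcal{A}^2_{k,e}$ explicitly by analyzing what values $\dim A_1 = \dim\langle x_0,\dots,x_n,u_1,u_2\rangle$ can take for a GNP-polynomial $f = f_1g_1 + \dots + f_sg_s$ of type $(2,n,k,e)$, where $g_1,\dots,g_s \in \K[u_1,u_2]_e$ and $f_1,\dots,f_s \in \K[x_0,\dots,x_n]_k$ are linearly independent, and $s > \binom{k+1}{k} = k+1$ (so $s \geq k+2$). The condition $\dim A_1 = m+n+1 = n+3$ forces the forms $f_i$ to actually involve all the superfluous variables $x_0,\dots,x_n$ in an essential way (no linear change eliminates one), and similarly $g_1, g_2$ must span a space involving both $u_1, u_2$.

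First I would establish the lower bound: since we need at least $s \geq k+2$ linearly independent forms $g_j \in \K[u_1,u_2]_e$, and $\dim\K[u_1,u_2]_e = e+1$, we need $e+1 \geq k+2$, i.e. $e \geq k+1$, which is consistent with the hypothesis $k < e$. The minimum codimension: the smallest meaningful GNP-polynomial of this type needs $m = 2$ essential variables and enough superfluous variables to house $s \geq k+2$ linearly independent degree-$k$ forms that genuinely depend on all of them. I expect $a_2 = 5$ corresponds to the minimal configuration $n = 2$ (so $m+n+1 = 5$), realized by something like $f = \sum_{j} f_j g_j$ with a Perazzo-type or Ikeda-type shape — concretely one should exhibit an explicit GNP-polynomial in $5$ variables (e.g. using $f_j$ monomials in $x_0, x_1, x_2$ and $g_j$ spanning a $(k+2)$-dimensional subspace of $\K[u_1,u_2]_e$). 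Then I would argue $\dim A_1 < 5$ is impossible: with $m = 2$ we always have the two $u$'s, and we need the $f_j$ to be linearly independent degree-$k$ forms in the $x$'s that don't all collapse under a linear change — a dimension count shows at least three superfluous variables are needed once $k \geq 1$ and $s \geq k+2 \geq 3$ (actually the constraint is subtler, since $\dim\K[x_0,x_1]_k = k+1 < k+2$, so two superfluous variables cannot support $s \geq k+2$ linearly independent forms $f_j$, forcing $n \geq 2$).

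Next, the upper bound $b_2 = e+3$: here we want to maximize $n$. We take $s = e+1$ (the maximum, using the full basis $g_j = u_1^{e+1-j}u_2^{j-1}$, $j = 1,\dots,e+1$, of $\K[u_1,u_2]_e$) and choose $f_1,\dots,f_{e+1}$ to be linearly independent degree-$k$ forms in as many $x$-variables as possible while still being linearly independent — but linear independence of $e+1$ forms in $\K[x_0,\dots,x_n]_k$ and the requirement that they ``essentially'' involve all $n+1$ of the variables limits $n+1 \leq$ something. The cleanest choice: take $f_j$ to be products that force exactly $n+1$ superfluous variables; one checks the maximal $\dim A_1$ achievable is $e+3$, i.e. $n+1 = e+1$, giving $m + n + 1 = 2 + (e+1) + 1$... wait, that's $e+4$; so more carefully the count must give $n+1 = e$ essential... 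The precise bookkeeping (which I would do via an explicit family, e.g. $f = x_0 u_1^{e-k}v^k\text{-type}$ generalizations, and a matching upper-bound argument showing $\dim A_1 \leq e+3$ by bounding the number of superfluous variables that can appear) is exactly the routine-but-delicate part.

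To fill in all intermediate values $\{5, 6, \dots, e+3\}$: I would argue by a ``degeneration/specialization'' or direct interpolation argument — given a GNP-polynomial realizing codimension $c$ with $5 \leq c < e+3$, one can add a new superfluous variable $x_{n+1}$ and modify one $f_j$ (e.g. replace $f_j$ by $f_j + x_{n+1}\cdot(\text{something})$ or append a term $x_{n+1}^k g_{new}$ reusing an existing $g$) to produce a GNP-polynomial of the same type with $\dim A_1 = c+1$, staying within the constraint $s \leq e+1$ since we are not forced to increase $s$. This inductive step, climbing one superfluous variable at a time from $n = 2$ up to the maximum, yields every intermediate value. The main obstacle I anticipate is the upper bound argument: proving rigorously that no GNP-polynomial of type $(2,n,k,e)$ can have $\dim A_1 > e+3$ requires showing that linear independence of the $f_j$ over $\K[x]$, combined with $s \leq e+1$ and the structure forced by $\Ann(f)$, caps the number of genuinely-present superfluous variables — this needs a careful analysis of $\Ann(f)_1$ rather than a one-line dimension count, and is where the interplay between the two variable sets (essential versus superfluous) has to be handled precisely.
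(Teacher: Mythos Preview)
Your approach matches the paper's: explicit constructions at the two extremes plus interpolation. For the minimum the paper takes $s=k+2$, $g_j=u^{e-j}v^j$ ($j=0,\ldots,k+1$), $f_j=x^{k-j}y^j$ for $j\le k$ and $f_{k+1}=z^k$, giving a GNP-polynomial in five variables; for the maximum it takes $s=e+1$, $g_0,\ldots,g_e$ a basis of $\K[u,v]_e$, and $f_j=x_j^k$. Your arithmetic slip is only an indexing issue: with $e+1$ superfluous variables $x_0,\ldots,x_e$ one has $n=e$, so $\dim A_1=m+n+1=2+e+1=e+3$, not $e+4$.

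Your instinct that the inequality $b_2\le e+3$ is the delicate point is well placed. The paper's justification is simply ``the maximal number of linearly independent $g_j$ is $e+1$, so the maximal number of superfluous variables is $e+1$'', which tacitly assumes that $s$ forms $f_j$ cannot essentially depend on more than $s$ variables; this is not automatic for $k\ge 2$ (for instance the four quadrics $x_0^2,\,x_1^2,\,x_2^2,\,x_3^2+x_4^2$ genuinely involve five variables). Similarly, your lower-bound argument presupposes $s\ge k+2$, which is the sufficient condition of Proposition~\ref{prop:formacanonica}, not part of the definition of a GNP-polynomial. That said, for the use made of the lemma in Theorem~\ref{thm:watanabecomponent} only the inclusion $\{5,\ldots,e+3\}\subseteq\mathcal{A}^2_{k,e}$ is needed, and that part both your outline and the paper's proof establish cleanly.
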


\begin{proof} By Proposition \ref{prop:formacanonica}, since $\dim \K[u,v]_k=k+1$, it is enough to exhibit $k+2$ linearly independent $g_j \in \K[u,v]_e$.
Let $ g_j=u^{e-j}v^j$ for $j=0,1,\ldots,k+1$. 
the minimal number of superfluous variables is $3$ and we can take $f_j=x^{k-j}y^j$ for $j=0,\ldots,k$ and $f_{k+1}=z^k$.

Therefore $f = x^ku^e+x^{k-1}yu^{e-1}v+\ldots+y^ku^{e-k}v^k+z^ku^{e-k-1}v^{k+1}$ is a GNP-polynomial of type $(2,2,k,e)$. 
Hence, $\dim A_1 \geq 5$.

The maximal number of linearly independents $g_j \in \K[u,v]_e$ is $\dim_{\K} \K[u,v]_e = e+1$, so the maximal number of superfluous variables is $e+1$, and we can take 
$ f_j=x_j^k$ for $j=0,\ldots k$.
Therefore $\dim A_1 \leq e+3$ and all intermediate values are achieved.
\end{proof}
\medskip

\begin{thm}\label{thm:watanabecomponent} 
 For each $N \geq 4$, $d \geq 3$ and $1 \leq k < \frac{d}{2}$ there are GNP-polynomials $f=f_1g_1+\ldots+f_sg_s$ of 
 type $(m,n,k,e)$ with $N=m+n$ and $\deg(f)=d=e+k$. %For these hypersurfaces $\hess^{k}_f=0$.\\
% Furthermore, they are not cones and for $k>1$, we get \rosso{$\hess_f \neq 0$}.  
\end{thm}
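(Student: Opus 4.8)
The strategy is to realize every codimension $N+1 = m+n+1 \geq 5$ and every socle degree $d = e+k \geq 3$ (with $1 \leq k < d/2$) by a suitable GNP-polynomial, building the general case up from the base case $m=2$ established in Lemma \ref{lema:m=2}. First I would observe that Proposition \ref{prop:formacanonica} already guarantees $\hess^k_f = 0$ for any polynomial of the form $f = f_1g_1 + \ldots + f_sg_s$ with $s > \binom{m-1+k}{k}$, $f_j \in \K[x_0,\ldots,x_n]_k$ and $g_j \in \K[u_1,\ldots,u_m]_e$ linearly independent; so the only real content is producing, for each admissible $(N,d,k)$, such an $f$ whose associated algebra $A = Q/\Ann(f)$ has exactly $\dim A_1 = N+1$ (i.e.\ $f$ genuinely involves $N+1$ independent variables after a linear change, and $\Ann(f)_1 = 0$).

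The key combinatorial step is a bookkeeping argument on the sets $\mathcal{A}_{k,e}^m$ from Definition \ref{defin:Am}. Lemma \ref{lema:m=2} gives $\mathcal{A}^2_{k,e} = \{5, 6, \ldots, e+3\}$. To climb to larger $m$, I would add one essential variable at a time: given a GNP-polynomial $f$ of type $(m,n,k,e)$, I would exhibit a GNP-polynomial of type $(m+1, n', k, e)$ realizing each value in a suitable interval $[a_{m+1}, b_{m+1}]$ of $\dim A_1$. Concretely, with $m+1$ essential variables $u_1, \ldots, u_{m+1}$ one has more room for the forms $g_j \in \K[u_1,\ldots,u_{m+1}]_e$: the maximum number of linearly independent such forms jumps to $\binom{m+e}{e}$, allowing up to that many superfluous variables $x_j = $ (a $k$-th power), while the minimum is controlled by the requirement $s > \binom{m+k}{k}$, forcing at least $\binom{m+k}{k}+1$ forms $g_j$ and hence at least a small fixed number of superfluous variables. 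Between these extremes, all intermediate values of $\dim A_1$ are attained by choosing the $g_j$ as an appropriate subset of monomials in $u_1,\ldots,u_{m+1}$ and the $f_j$ as powers of distinct variables, exactly as in the proof of Lemma \ref{lema:m=2}; the ranges for consecutive $m$ overlap, so the union $\bigcup_{m \geq 2} \mathcal{A}_{k,e}^m$ covers every integer from $5$ upward. Since $N+1 = m+n+1$ ranges over all integers $\geq 5$ as $(m,n)$ varies with $m \geq 2$, $n \geq 0$, this yields a GNP-polynomial of some type $(m,n,k,e)$ with $m+n = N$ for every $N \geq 4$.

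I would also need to check that the constructed $f$ is, after discarding redundant variables, genuinely a form in $N+1$ variables — that is, that $\Ann(f)_1 = 0$ — which is immediate from the construction since each of the $m$ essential and each of the $n+1$ superfluous variables appears effectively in $f$; and that the degree and Hessian-order constraints $d = e+k \geq 3$, $1 \leq k < d/2$ are met, which just says $e > k \geq 1$, consistent with the hypothesis $k < e$ in Proposition \ref{prop:formacanonica}. The main obstacle is the inductive step on $m$: verifying that the explicit monomial choices for $g_1, \ldots, g_s$ and $f_1, \ldots, f_s$ are linearly independent in their respective degree pieces and that the resulting $\dim A_1$ hits precisely the target value — in particular, controlling the minimal value $a_m(k,e)$ so that the intervals $\mathcal{A}_{k,e}^m$ genuinely overlap and leave no gap at any $N$. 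Once the overlap is verified, the theorem follows by assembling the pieces: pick $m \geq 2$ with $N+1 \in \mathcal{A}_{k,e}^m + (N+1 - (\text{ambient codim of that } f))$, or more cleanly, pick $m$ and $n$ directly so that a type-$(m,n,k,e)$ GNP-polynomial with $\dim A_1 = N+1$ exists.
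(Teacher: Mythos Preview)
Your proposal is correct and follows essentially the same route as the paper: show that each $\mathcal{A}_{k,e}^m$ is an interval of integers, anchor the base case $m=2$ via Lemma \ref{lema:m=2}, and then verify that consecutive intervals overlap (the paper does this by computing $b_m = m + \binom{m-1+e}{e}$ exactly and exhibiting a type-$(m+1,m+1,k,e)$ example giving $a_{m+1} \leq 2m+3 \leq b_m$), so that $\bigcup_{m\geq 2}\mathcal{A}_{k,e}^m = \{5,6,\ldots\}$. The only place where you are vaguer than the paper is the explicit witness for the small end of $\mathcal{A}_{k,e}^{m+1}$; the paper packs many $f_j$ into few $x$-variables by taking $\{f_j\}$ to be a full basis of $\K[x_0,\ldots,x_{m+1}]_k$ rather than $k$-th powers of distinct variables, which is what makes the bound $a_{m+1}\leq 2m+3$ work.
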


\begin{proof} 
 Following  Definition \ref{defin:Am} we easily see that $\mathcal{A}_{k,e}^m=\{a_m,a_m+1,\ldots,b_m\}$. Since, by Lemma \ref{lema:m=2}, 
 $\mathcal{A}^2_{k,e}=\{5,6,\ldots,e+3\}$, it is enough to prove that $a_{m+1} \leq b_m$ for all $m\geq 2$.

 To verify the inequality we will compute $b_m$ and an upper bound $A_{m+1}$ for $a_{m+1}$, such that $a_{m+1}\leq A_{m+1} \leq b_m$. \\
 \begin{enumerate}
  \item Computation of $b_m$.\\
  Fixed $m,k,e$, to maximize $\dim A_1=m+n+1$ we must maximize $n$. Let $\{g_1,\ldots,g_s\}$ be complete basis of $\K[u_1,\ldots,u_m]_e$ (for example, the standard basis), then $s=\binom{m-1+e}{e}$.
  Let $f_j = x_j^k$ for $j=1,\ldots,\binom{m-1+e}{e}$. Since $s=\binom{m-1+e}{e}>\binom{m-1+k}{k}$, by Proposition \ref{prop:formacanonica}, $f$ is a GNP-form and $$b_m=m+\binom{m-1+e}{e}.$$
  \item An upper bound for $a_{m+1}$.\\ 
  
  We construct an example to obtain an upper bound for $a_{m+1}$. 
Let $\{f_1, \ldots, f_s\}$ be a basis of $\K[x_0,\ldots,x_{m+1}]_k$ and let $\{g_1, \ldots, g_s\} \subset \K[u_1,\ldots,u_{m+1}]_e$ be linearly independent set. This is possible because 
$s = \binom{m+1+k}{k} \leq  \binom{m+e}{e}$ for $e > k$. By Proposition \ref{prop:formacanonica}, $f$ is a GNP-polynomial of type $(m+2,m+1,k,e)$. Therefore, choosing the $g_i$ depending on all the variables $u_j$,
we get $\dim A_1=2m+3$, yielding
  $$a_{m+1}\leq 2m+3.$$
 \end{enumerate}
Notice that $a_{m+1} \leq 2m+3 \leq m+\binom{m-1+e}{e}=b_m$ for all $e\geq 2$ and for all $m \geq 2$. 
Indeed, $$\binom{m-1+e}{e}=\binom{m+(e-1)}{(e-1)+1} > \binom{m+e-1}{1} +1=m+e\geq m+2.$$  
Therefore $\binom{m-1+e}{e} \geq m+3 \Rightarrow b_m \geq 2m+3$. \\
The result now follows from the fact that $$\displaystyle \bigcup_{m \geq 2} \mathcal{A}_{k,e}^m = \{5,6,\ldots\}.$$
 \end{proof}

 \begin{rmk}\rm The GNP-polynomials are deeply connected with Gordan-Noether's ones and they generalize Perazzo's ones. For this reason they only appear for $N \geq 4$, and the case $k=1$ 
 is also covered. In general a GNP-polynomial of type $(m,n,k,e)$ with $k>1$ does not have $\hess^{k-1} =0$ as one can check directly in many examples. 
 Furthermore for $k>1$ one can prove that the general GNP-polynomial of type $(m,n,k,e)$ has $\hess_f \neq 0$. As a matter of fact we have been  unable to construct a
 GNP-polynomial of type $(m,n,k,e)$ such that $\hess_f^j=0$ for some $j <k$.
 \end{rmk}

Summarizing the results of this section we have proved the following generalization of Gordan Noether's theorem (c.f. Theorem \ref{thm:GN2}).

\begin{cor}\label{cor:GNgeneral}
 For each pair $(N,d)\not\in\{(3,3), (3,4)\}$ with  $N \geq 3$ and with $d \geq 3$, and for each $1 \leq k < \frac{d}{2}$ there exist 
 irreducible polynomials $f \in \K[x_0,\ldots,x_n,u_1,\ldots,u_m]$, where $N=m+n$, such that the hypersurface $X = V(f) \subset \P^N$ is not a cone and $\hess_f^k = 0$.  
\end{cor}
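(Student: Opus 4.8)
The plan is to deduce Corollary \ref{cor:GNgeneral} by assembling the three constructions of this section, organizing the argument by the number of essential variables. First I would dispose of the case $N = 3$: here $m = 2$ forces $n = 1$, so we are in two essential variables and must use an exceptional polynomial. By Theorem \ref{cor:question1}, for $n \geq 3$ (hence $N \geq 4$ in the exceptional setup $\K[u,v,x_2,\ldots,x_n]$), $d \geq 5$, and $2 \leq k < d/2$ we get irreducible forms, not cones, with $\hess_f^k = 0$; but for $N = 3$ the exceptional construction does not apply, and indeed the hypotheses of the corollary exclude the small cases. So for $N = 3$ one must check that the remaining cases $(3,d)$ with $d \geq 3$, $d \neq 3,4$, and $1 \leq k < d/2$ are covered — and here $k \geq 2$, so $d \geq 5$; this is precisely where one needs a separate construction in $\P^3$, which should come from Gordan-Noether's own series (Theorem \ref{thm:GN2} in the Appendix) together with an argument that passing to higher Hessians is no obstruction. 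Actually, re-examining: Gordan-Noether's counterexamples live in $\P^N$ for $N \geq 4$, so the $N = 3$, $k \geq 2$ cases with $d \geq 5$ must be handled by the exceptional family after all — but that family needs $n \geq 3$, i.e.\ $N = n \geq 3$ when there are no extra superfluous variables beyond $x_2,\ldots,x_n$; rereading Theorem \ref{cor:question1} the ambient space is $\P^n$ with $n \geq 3$, so taking $n = 3$ gives $N = 3$. Good: the exceptional family with $n = 3$ covers $(3,d)$ for $d \geq 5$ and $2 \leq k < d/2$, and $(3,3)$, $(3,4)$ are excluded; there is no $k$ with $1 \leq k < d/2$ when $d = 3$ except $k=1$, handled by requiring $N \geq 4$.

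For $N \geq 4$, the plan is to use GNP-polynomials. By Theorem \ref{thm:watanabecomponent}, for each $N \geq 4$, $d \geq 3$, and $1 \leq k < d/2$ there exist GNP-polynomials $f = f_1 g_1 + \cdots + f_s g_s$ of type $(m,n,k,e)$ with $N = m+n$ and $d = e+k$, and by Proposition \ref{prop:formacanonica} these satisfy $\hess_f^k = 0$. This handles all $N \geq 4$ including the case $k = 1$, which is exactly the classical Gordan-Noether range and which the exceptional family cannot reach. The one thing still needed is irreducibility and the not-a-cone property: a GNP-polynomial $f_1 g_1 + \cdots + f_s g_s$ need not be irreducible as written, so I would argue that one may add a general term — e.g.\ a general form $h$ in an auxiliary pair of variables, or more simply replace some $f_i, g_i$ by generic choices — to force irreducibility while preserving $\hess_f^k = 0$, since the vanishing of the $k$th Hessian in Proposition \ref{prop:key} is robust under such modifications as long as the relevant monomials $\alpha_i(f)$ remain in $\K[u_1,\ldots,u_m]$. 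The not-a-cone property follows once $\dim A_1 = N+1$, which is part of the definition of a GNP-polynomial of type $(m,n,k,e)$, so it suffices to note the constructions in Theorem \ref{thm:watanabecomponent} genuinely use all $N+1$ variables.

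The logical skeleton is therefore: (i) $N = 3$ (so necessarily $k \geq 2$ and $d \geq 5$) — use the exceptional family of Theorem \ref{cor:question1} with $n = 3$; (ii) $N \geq 4$ — use the GNP family of Theorem \ref{thm:watanabecomponent} together with Proposition \ref{prop:formacanonica}; (iii) in both cases, check irreducibility and not-a-cone. The excluded pairs $(3,3)$ and $(3,4)$ are precisely those where no valid $k$ exists in the exceptional range — for $(3,3)$ there is no $k$ with $2 \leq k < 3/2$, and for $(3,4)$ there is no $k$ with $2 \leq k < 2$ — consistent with the hypotheses. I expect the main obstacle to be purely bookkeeping: matching the parameter ranges of the two component theorems against the range $(N,d) \notin \{(3,3),(3,4)\}$, $1 \leq k < d/2$, and verifying that every such triple $(N,d,k)$ falls into exactly one of the two constructions — in particular that the gap left by the exceptional family at $N = 3$ when $d < 5$ coincides with the excluded pairs. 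The irreducibility argument is routine (genericity of the added term), and the vanishing of $\hess_f^k$ is already established in each case, so no new computation is required.
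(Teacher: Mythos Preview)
Your decomposition into the two families --- exceptional polynomials for $N=3$ and GNP-polynomials for $N\geq 4$ --- is exactly how the paper assembles the corollary (the paper offers no explicit proof, only the sentence ``Summarizing the results of this section\ldots''), and your care about irreducibility and the not-a-cone condition for GNP forms is more thorough than what the paper provides. However, your case analysis has a genuine gap: the triple $(N,d,k)$ with $N=3$, $d\geq 5$, and $k=1$ is allowed by the hypotheses of the corollary (only $(3,3)$ and $(3,4)$ are excluded), yet neither construction reaches it. Theorem~\ref{cor:question1} requires $k\geq 2$, and Theorem~\ref{thm:watanabecomponent} requires $N\geq 4$. Your sentence ``and here $k\geq 2$, so $d\geq 5$'' simply assumes away the problem without justification.

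In fact this gap cannot be filled: by Gordan--Noether's Theorem~\ref{thm:GN}, every hypersurface in $\P^3$ with $\hess_f=0$ is a cone, so for $N=3$ and $k=1$ no polynomial of the required kind exists, regardless of $d$. The corollary as literally stated is therefore false in this range; the abstract of the paper writes $2\leq k<d/2$, which is presumably the intended restriction (or else one must exclude $k=1$ whenever $N=3$). Your argument is correct for that corrected statement, but you should have flagged the inconsistency between the stated range $1\leq k<d/2$ and the available constructions rather than silently restricting to $k\geq 2$.
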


\section{Artinian Gorenstein algebras that do not satisfy the Lefschetz properties}

The goal of this section is to apply the previous results to construct Artinian Gorenstein algebras that do not satisfy the Lefschetz properties.
The link between these two subjects is Theorem \ref{WHessian} that will be restated now in a slightly different way. 

\begin{thm}\label{WHessiancor} {\rm (\cite{Wa1} and \cite{MW})} Let $A=Q/\operatorname{Ann}(f)$ be a standard graded Artinian Gorenstein algebra with 
$f\in\mathbb K[x_0,\ldots, x_N]_d$, suppose that $(\operatorname{Ann}(f))_1=0$. Then:
\begin{enumerate}
\item $A\simeq \frac{Q}{\operatorname{Ann}(f)}$
satisfies the $SLP$ if and only if $\hess^k_f\neq 0$ for every $k=1,\ldots,[d/2]$.
\medskip
\item If $d\leq 4$, then $A$ satisfies the $SLP$
if and only if $\hess(f)\neq 0$. In particular for $N\leq 3$, every such $A$
satisfies the $SLP$.
\end{enumerate}
\end{thm}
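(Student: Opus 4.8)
The plan is to deduce both parts from Watanabe's Hessian criterion, Theorem~\ref{WHessian}, together with the fact recorded in the Remark following it that for a standard graded Artinian Gorenstein algebra the $SLP$ coincides with the $SLP$ in the narrow sense, so that $A$ has the $SLP$ exactly when it admits a strong Lefschetz element. For part~(1), if $A$ has the $SLP$ I would take a strong Lefschetz element $L=a_0x_0+\cdots+a_Nx_N\in A_1$; Theorem~\ref{WHessian} then gives $\hess^k_f(a_0,\ldots,a_N)\neq 0$ for every $k=0,\ldots,[d/2]$, so in particular each of $\hess^1_f,\ldots,\hess^{[d/2]}_f$ is a nonzero polynomial. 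Conversely, suppose $\hess^k_f\not\equiv 0$ for $k=1,\ldots,[d/2]$, and note that also $\hess^0_f=f\not\equiv 0$. The zero loci of $\hess^0_f,\hess^1_f,\ldots,\hess^{[d/2]}_f$ are proper Zariski-closed subsets of $\K^{N+1}$, and since $\K$ is infinite their finite union is still proper, so we may choose $(a_0,\ldots,a_N)$ outside it. By Theorem~\ref{WHessian} the element $L=a_0x_0+\cdots+a_Nx_N$ is then a strong Lefschetz element, hence $A$ has the $SLP$.

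For part~(2) the key point is that the top Hessian is automatically a nonzero constant. If $d$ is even and $k=d/2$, then for a basis $\alpha_1,\ldots,\alpha_{\sigma_k}$ of $A_k$ the products $\alpha_i\alpha_j$ lie in $A_d$, and under the canonical isomorphism $A_d\cong\K$ given by $\eta\mapsto\eta(f)$ the matrix $\Hess^{d/2}_f=(\alpha_i\alpha_j(f))$ is exactly the Gram matrix of the multiplication pairing $A_{d/2}\times A_{d/2}\to A_d$. This pairing is non-degenerate because $A$ is Gorenstein (Poincaré Duality, Proposition~\ref{G=PD}), so $\Hess^{d/2}_f$ is invertible and $\hess^{d/2}_f$ is a nonzero element of $\K$. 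Hence, for $2\le d\le 4$, the only Hessian in the list of part~(1) that can vanish is $\hess^1_f$: if $d=3$ then $[d/2]=1$; if $d=4$ then $[d/2]=2$ but $\hess^2_f$ is a nonzero constant by the above; if $d=2$ then $[d/2]=1$ and $\hess^1_f$ itself is a nonzero constant, so the $SLP$ holds automatically. Since changing the basis of $A_1$ multiplies $\hess^1_f$ only by a nonzero scalar, the vanishing of $\hess^1_f$ is equivalent to the vanishing of the classical Hessian $\hess(f)$, and part~(1) becomes: for $2\le d\le 4$, $A$ has the $SLP$ if and only if $\hess(f)\neq 0$.

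For the last assertion, note that $(\Ann f)_1=0$ is equivalent to the first partial derivatives of $f$ being linearly independent, that is, to $X=V(f)\subset\P^N$ not being a cone. For $N\le 3$ and $d\ge 3$, the Gordan--Noether Theorem~\ref{thm:GN} asserts that a hypersurface with vanishing Hessian is a cone, so contrapositively $\hess(f)\neq 0$; for $d=2$ the condition $(\Ann f)_1=0$ already forces $f$ to be a non-degenerate quadratic form, so again $\hess(f)\neq 0$. By the equivalence of part~(2), such an $A$ satisfies the $SLP$. I do not expect a genuine obstacle here: the statement is essentially a corollary of Theorem~\ref{WHessian}, the only steps requiring attention being the genericity argument in the converse of part~(1) --- a finite union of proper subvarieties of affine space over an infinite field is again proper, so a general linear form works --- and the observation that Poincaré duality forces $\hess^{d/2}_f$ to be a nonzero constant, which is precisely what collapses the case $d\le 4$ onto the single classical Hessian.
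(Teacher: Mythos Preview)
The paper does not supply its own proof of this theorem; it is stated with attribution to \cite{Wa1} and \cite{MW} as a restatement of Theorem~\ref{WHessian}, and no argument is given beyond that citation. Your proof is correct and fills in precisely the steps the paper leaves to the reader: the genericity argument over an infinite field for the converse direction of part~(1), the observation that Poincar\'e duality forces $\hess^{d/2}_f$ to be a nonzero constant (so that for $d\le 4$ only $\hess^1_f=\hess(f)$ is in play), and the appeal to the Gordan--Noether Theorem~\ref{thm:GN} together with the identification of $(\Ann f)_1=0$ with $X$ not being a cone for the final clause.
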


\begin{cor} \label{cor:slp} For each pair $(N,d)\not\in\{(3,3), (3,4)\}$ with  $N \geq 3$ and with $d \geq 3$, there exist standard graded 
Artinian Gorenstein algebras $A = \displaystyle \oplus_{i=0}^d A_i$ of codimension $\dim A_1=N+1 \geq 4$ and socle degree $d$ that do not satisfy the Strong 
Lefschetz Property. Furthermore, for each $L \in A_1$ we can choose arbitrarily the level $k$ where the map 
$$\bullet L^{d-2k} : A_k \to A_{d-k}$$ 
is not an isomorphism.
\end{cor}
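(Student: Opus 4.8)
The plan is to read the statement off from Corollary \ref{cor:GNgeneral} together with the Hessian criterion, Theorem \ref{WHessiancor} (equivalently Theorem \ref{WHessian}). First I would fix a pair $(N,d)\notin\{(3,3),(3,4)\}$ with $N\geq 3$, $d\geq 3$, and fix a level $k$ admissible for the constructions of the previous section, i.e. $1\leq k<d/2$ (necessarily $k\geq 2$ when $N=3$, since Theorem \ref{thm:GN} forbids a vanishing classical Hessian in $\P^3$). By Corollary \ref{cor:GNgeneral} there is an irreducible $f\in\K[x_0,\ldots,x_N]_d$ whose hypersurface $X=V(f)\subset\P^N$ is not a cone and with $\hess^k_f\equiv 0$. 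Since $X$ is not a cone, the forms $\partial f/\partial x_0,\ldots,\partial f/\partial x_N$ are linearly independent, i.e. $(\Ann(f))_1=0$, so by Theorem \ref{G=ANNF} the algebra $A:=Q/\Ann(f)$ is a standard graded Artinian Gorenstein algebra with $\dim_\K A_1=N+1$ and socle degree $d$ --- an algebra of exactly the required shape.

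Next I would invoke the precise form of Watanabe's criterion. For $L=a_0X_0+\cdots+a_NX_N\in A_1$, the scalar $\hess^k_f(a_0,\ldots,a_N)$ equals, up to a nonzero factor (a power of $(d-2k)!$, hence harmless in characteristic zero), the determinant in any fixed $\K$-basis of $A_k$ of the symmetric bilinear form $A_k\times A_k\to A_d\simeq\K$, $(\alpha,\beta)\mapsto(\alpha\beta L^{d-2k})(f)$; by the Poincaré duality pairing $A_k\times A_{d-k}\to A_d$ (Proposition \ref{G=PD}) this form is non-degenerate exactly when $\bullet L^{d-2k}\colon A_k\to A_{d-k}$ is an isomorphism. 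This equivalence is the content of Theorem \ref{WHessian} and its proof in \cite{Wa1,MW}. Since $\hess^k_f$ is the zero polynomial, $\hess^k_f(a_0,\ldots,a_N)=0$ for \emph{every} $(a_0,\ldots,a_N)\in\K^{N+1}$; hence $\bullet L^{d-2k}\colon A_k\to A_{d-k}$ fails to be an isomorphism for \emph{every} $L\in A_1$. In particular no $L\in A_1$ is a strong Lefschetz element, so $A$ does not satisfy the SLP, and since $k$ was an arbitrary admissible level the last assertion of the corollary follows as well. (The exclusions $(3,3),(3,4)$ are genuinely needed: for $N\leq 3$ and $d\leq 4$ every such $A$ satisfies the SLP by Theorem \ref{WHessiancor}(2).)

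I do not expect a real obstacle here: essentially all the work is already packaged in Corollary \ref{cor:GNgeneral} and Theorem \ref{WHessian}. The only point that deserves care is the dictionary used in the second paragraph --- that vanishing of $\hess^k_f$ at a point $(a_0,\ldots,a_N)$ is equivalent to failure of $\bullet L^{d-2k}$ to be an isomorphism, and not merely to $L$ failing to be a full strong Lefschetz element --- but this is precisely how Theorem \ref{WHessian} is established, so it can be cited rather than reproved.
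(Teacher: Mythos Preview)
Your proposal is correct and follows exactly the paper's route: the paper's own proof is the single sentence ``It is just a version of Corollary \ref{cor:GNgeneral} with a view of Theorem \ref{WHessiancor}.'' You have simply unpacked this, making explicit why $X$ not being a cone yields $(\Ann f)_1=0$ and hence $\dim A_1=N+1$, and why the identical vanishing of $\hess^k_f$ forces $\bullet L^{d-2k}$ to fail for \emph{every} $L$ rather than merely for a generic one; your parenthetical remark that $k\geq 2$ is forced when $N=3$ (by Theorem \ref{thm:GN}) is a useful sanity check that the paper's one-line proof glosses over.
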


\begin{proof} It is just a version of Corollary \ref{cor:GNgeneral} with a view of Theorem \ref{WHessiancor}.
\end{proof}
\medskip

\begin{cor} \label{cor:wlpodd} For each pair $(N,d)\neq (3,3)$ with  $N \geq 3$ and odd $d = 2q+1 \geq 3$, there exist standard graded Artinian Gorenstein algebras $A = \displaystyle  \oplus_{i=0}^d A_i$ with 
$\dim A_1=N+1$ and socle degree $d$ with unimodal Hilbert vector and that do not satisfy the Weak Lefschetz Property. 
\end{cor}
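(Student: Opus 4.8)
The plan is to reduce the statement to the case $N=3$ by a direct-sum (connected-sum) construction, then handle $N=3$ with an explicit GNP-type polynomial. Fix an odd socle degree $d = 2q+1 \geq 3$. For $N = 3$ and $d = 3$ the excluded case is exactly $(3,3)$; for $N=3$ and $d \geq 5$ odd we want a codimension $4$ Gorenstein algebra of socle degree $d$, unimodal Hilbert vector, failing WLP. The first step is to invoke Theorem \ref{cor:question1}: for $n=3$, $d \geq 5$ odd, and $k=2 < d/2$ there is an irreducible exceptional polynomial $f \in \K[u,v,x_2,x_3]$ of degree $d$ with $\hess_f \neq 0$ and $\hess_f^2 = 0$. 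By the Hessian criterion (Theorem \ref{WHessian}/\ref{WHessiancor}), the associated algebra $A = Q/\Ann(f)$ of codimension $4$ and socle degree $d$ has no strong Lefschetz element, the obstruction occurring precisely at level $k=2$, i.e. $\bullet L^{d-4}\colon A_2 \to A_{d-2}$ fails to be an isomorphism for every $L \in A_1$.

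The key point I would then argue is that for such an algebra the failure at level $k=2$ already forces failure of the WLP. Here I would use the standard fact that for a Gorenstein algebra of odd socle degree $d=2q+1$ with symmetric Hilbert vector, if $\bullet L\colon A_i \to A_{i+1}$ has maximal rank for all $i$ then, composing and using Poincaré duality (Proposition \ref{G=PD}), all the maps $\bullet L^{d-2i}\colon A_i \to A_{d-i}$ between the symmetric pair are isomorphisms; contrapositively, failure of some $\bullet L^{d-2i}$ to be an isomorphism propagates to failure of maximal rank of some single multiplication $\bullet L$. Thus no $L \in A_1$ is a weak Lefschetz element. One must also check unimodality of $\Hilb(A)$: for the exceptional polynomial $f = x_2 u^{k-1}v^{d-k} + x_3 u^{d-2}v + h(x_2,x_3) + p(x_4,\dots,x_n)$ specialized appropriately to $n=3$, I would compute the Hilbert vector directly from the structure of $\Ann(f)$ (or cite the relevant computation in the paper for the exceptional family) and verify the sequence $1 \leq h_1 \leq \dots \leq h_q \geq \dots \geq 1$.

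For $N \geq 4$ I would take the $N=3$ example $A' = Q'/\Ann(f')$ just constructed (socle degree $d$, unimodal, failing WLP) and form $A = Q/\Ann(f' + x^d_{4} + \dots + x^{d}_{N})$, adding $N-3$ new variables each contributing a summand $x_i^d$. Since the Hessian of a sum in separated variables is block-diagonal (as used repeatedly in the proofs of Theorem \ref{cor:question1} and Theorem \ref{thm:watanabecomponent}), and since each added block is that of the SLP algebra $\K[x_i]/\Ann(x_i^d)$, one still has $\hess_f^2 = 0$ while the new variables only enlarge $A_1$ by $N-3$; hence $\dim A_1 = N+1$. The same propagation argument shows WLP fails for $A$, and the Hilbert vector of $A$ is the "convolution" of the unimodal vector of $A'$ with those of the one-variable algebras, which one checks remains unimodal (this is where I would be slightly careful, since convolutions of unimodal symmetric sequences are unimodal by a classical log-concavity/symmetry argument, but it deserves an explicit line). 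Alternatively, and perhaps more cleanly, one could instead quote Corollary \ref{cor:GNgeneral} with $k=2$ directly for the pair $(N,d)$ and then only argue the Hilbert-vector unimodality and the $k=2 \Rightarrow$ WLP implication; I expect the latter implication — turning "fails SLP at step $2$" into "fails WLP" using only oddness of $d$ and the Gorenstein symmetry — to be the main obstacle, since for general socle degree failing the strong property at an interior step need not kill the weak property, and it is the parity $d = 2q+1$ together with the fact that the obstruction sits at the minimal non-trivial level $k=2$ that makes the argument go through.
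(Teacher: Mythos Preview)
Your proposal has a genuine gap, and you correctly identify it yourself in the final paragraph: the implication ``$\hess_f^2=0$ (i.e.\ SLP fails at level $2$) $\Rightarrow$ WLP fails'' is not valid, even for Gorenstein algebras of odd socle degree. The sentence ``if $\bullet L\colon A_i \to A_{i+1}$ has maximal rank for all $i$ then all $\bullet L^{d-2i}\colon A_i \to A_{d-i}$ are isomorphisms'' is simply the assertion that WLP implies SLP, which is false. Concretely, with a unimodal symmetric Hilbert vector one has $\bullet L\colon A_1\hookrightarrow A_2$, $\bullet L\colon A_{d-2}\twoheadrightarrow A_{d-1}$, and these compose with the intermediate maps, but nothing prevents the image of $A_1$ inside $A_{d-2}$ from meeting the kernel of $A_{d-2}\to A_{d-1}$; so $\bullet L^{d-2}\colon A_1\to A_{d-1}$, and a fortiori $\bullet L^{d-4}\colon A_2\to A_{d-2}$, need not be isomorphisms. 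Oddness of $d$ and ``the obstruction sits at level $2$'' do not rescue this.

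The paper's argument avoids the issue entirely by choosing the \emph{middle} level rather than $k=2$. Since $d=2q+1$, Corollary~\ref{cor:slp} (via Corollary~\ref{cor:GNgeneral}) is applied with $k=q$: one produces $f$ with $\hess_f^{q}=0$, so that $\bullet L^{d-2q}=\bullet L\colon A_q\to A_{q+1}$ fails to be an isomorphism for every $L$. But $\dim A_q=\dim A_{q+1}$ by Poincar\'e duality, hence this single multiplication map is not of maximal rank, which is precisely failure of WLP. No propagation is needed. For unimodality the paper does not rely on a general ``sum of unimodal vectors'' principle but writes down explicit GNP polynomials (two cases, $N$ even and $N$ odd) and computes $h_k$ directly. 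Incidentally, for polynomials in separated variables the Hilbert vectors \emph{add} (Lemma~\ref{lema:unimodalvariaveisseparadas}), they do not convolve.
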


\begin{proof} 
Since $d=2q+1$ is odd, we can take $k=q$ in  Corollary \ref{cor:slp} so that $\bullet L : A_q \to A_{q+1}$ is not an isomorphism for all $L \in A_1$.
 Since $d-q=q+1$, $\dim A_q=\dim A_{q+1}$ and the map has not maximal rank. To conclude the proof we must show that we can choose $f$ in such a way that 
 the Hilbert vector $\operatorname{Hilb}(A)$ is unimodal.
 
  We shall consider two cases, according to $N$ is even or odd. 
 In both cases we use \\$\{M_i|i=1,\ldots,\nu = \binom{m-1+q}{q}\}$ to denote the standard basis of $\K[u_1,\ldots,u_m]_{q}$ lexicographically ordered. 
 So, we have $M_1=u_1^q$ and $M_{\nu}=u_m^q$. We take $s=\nu+1$, we set $\underline{x}=(x_1,\ldots,x_m)$ and $\underline{u}=(u_1,\ldots,u_m)$ in the next constructions we 
 denote $M_i(\underline{u})=M_i$ and $M_i(\underline{x})$ the equivalent monomial in the variables $x_1,\ldots,x_m$.
 \begin{enumerate}
  \item[(i)] {\bf First case: $N$ even.} Set $d=2q+1$, $n=m \geq 2$, $N=2m \geq 4$. Consider 
  $$f=x_0^qu_1^{q+1}  + \displaystyle \sum_{i=1}^{\nu} M_i(\underline{x})M_i(\underline{u})u_m.$$
  For each $k=1,\dots,\lfloor\frac{d}{2}\rfloor=q$, all differentials of the form $X_1^{a_1}\ldots X_m^{a_m}U_1^{b_1}\ldots U_m^{b_m}$ with $a_1+\ldots+a_m+b_1+\ldots+b_m=k$ 
  are linearly independent. The remaining differentials in $A_k$ are $X_0U_1^{k-1}, \ldots,X_0^{k-1}U_1,X_0^k$. Hence, 
  $$h_k=k+\binom{N-1+k}{N-1}, \ \text{for}\ k=1,\ldots,q.$$
  Therefore $\operatorname{Hilb}(A)$ is unimodal. 
  \item[(ii)] {\bf Second case: $N$ odd.} Set $d=2q+1$, $n=m+1\geq 3$, $N=2m +1 \geq 5$. Consider 
  $$f=x_0^qu_1^{q+1} + x_{m+1}^qu_m^{q+1} + \displaystyle \sum_{i=1}^{\nu - 1} M_i(\underline{x})M_i(\underline{u})u_m.$$
 For each $k=1,\dots,\lfloor\frac{d}{2}\rfloor=q$, all differentials of the form $X_1^{a_1}\ldots X_m^{a_m}U_1^{b_1}\ldots U_m^{b_m}$ with $a_1+\ldots+a_m+b_1+\ldots+b_m=k$ 
  are linearly independent. The remaining differentials in $A_k$ are $X_0U_1^{k-1}, \ldots,X_0^{k-1}U_1,X_0^k$ and $X_{m+1}U_m^{k-1}, \ldots,X_{m+1}^{k-1}U_m,X_{m+1}^k$. Hence, 
  $$h_k=2k+\binom{N-1+k}{N-1}, \ \text{for}\ k=1,\ldots,q.$$
  Therefore $\operatorname{Hilb}(A)$ is unimodal. 
\end{enumerate}

\end{proof}

\begin{rmk}\rm We want to recall that an Artinian Gorenstein algebra has the WLP if and only if the map $\bullet L:A_i \to A_{i+1}$ is injective in the middle (see \cite[Proposition 2.1]{MMN}).\\ 
For $d=2q+1$ odd we can take $i=q$ and consider only the map $\bullet L:A_q \to A_{q+1}$. In this case, by duality, the injectivity of $\bullet L$ gives us an isomorphism and it is equivalent 
to the non vanishing of  $\hess_f^{q}$.\\
On the other hand, for $d=2q$ even, we can take $i=q-1$ and consider the map $\bullet L:A_{q-1} \to A_q$. Notice that 
the non-vanishing of the Hessian $\hess_f^{q}$ gives us an isomorphism $\bullet L^2:A_{q-1}\to A_{q+1}$ which implies the injectivity of $\bullet L:A_{q-1} \to A_q$.
In the even case, the converse is not true. The injectivity of $\bullet L:A_{q-1} \to A_q$ implies the surjectivity of $\bullet L:A_{q} \to A_{q+1}$, but it does not imply an 
isomorphism $\bullet L^2:A_{q-1}\to A_{q+1}$. In this case we need more than the vanihsing of the $(q-1)$-th Hessian to get the failure of the WLP. 
The next example is of this type. 
\end{rmk}

\begin{ex}\rm
  Let $f=xu^3+yu^2v+zuv^2+v^4 \in R = \K[x,y,z,u,v]$ and let $A = Q/\Ann(f)$. Since $\hess_f=0$, the map $\bullet L^2:A_1 \to A_3$ is not an isomorphism. 
  On the other hand, the map $\bullet L:A_1 \to A_2$ is injective for $L=U+V$, as one can easily check.
\end{ex}

\begin{thm}\label{prop:44}
All standard graded Artinian Gorenstein $\K$-algebra $A = Q/\operatorname{Ann}(f)$ of codimension $5$ and socle degree $4$, with $\K$ an algebraically closed 
field of characteristic zero, satisfy the WLP.
\end{thm}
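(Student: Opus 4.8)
The plan is a dichotomy governed by the ordinary Hessian. If $\hess_f\neq 0$, then since the socle degree is $d=4\le 4$, Theorem \ref{WHessiancor}(2) already gives that $A$ has the $SLP$, hence the $WLP$, and there is nothing to prove. So I would assume $\hess_f=0$. Because $(\Ann f)_1=0$, no linear differential operator annihilates $f$, so $X=V(f)\subset\P^4$ is not a cone; by Gordan--Noether's classification of hypersurfaces in $\P^4$ with vanishing Hessian (recalled in the Appendix, cf.\ Theorem \ref{thm:GN2}), $f$ is then, after a linear change of coordinates, a member of the Gordan--Noether series, which in degree $4$ has the shape
$$f=x_0\,g_1(u,v)+x_1\,g_2(u,v)+x_2\,g_3(u,v)+B(u,v),$$
with $g_1,g_2,g_3\in\K[u,v]_3$ linearly independent (otherwise $f$ is again a cone) and $B\in\K[u,v]_4$, possibly zero.

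Next I would reduce the $WLP$ to a single multiplication map. Since $A$ is Gorenstein with Hilbert vector $(1,5,h_2,5,1)$, Poincaré duality (Proposition \ref{G=PD}) identifies $A_3\simeq A_1^{*}$ and $A_4\simeq\K$ compatibly with the ring structure, so that for a fixed $L\in A_1$ the map $\bullet L\colon A_2\to A_3$ is the transpose of $\bullet L\colon A_1\to A_2$ and $\bullet L\colon A_3\to A_4$ is the transpose of $\bullet L\colon A_0\to A_1$. As $(\Ann f)_1=0$, every nonzero $L$ makes $\bullet L\colon A_0\to A_1$ injective, hence $\bullet L\colon A_3\to A_4$ surjective. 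Therefore $A$ has the $WLP$ as soon as there is one $L\in A_1$ with $\bullet L\colon A_1\to A_2$ of maximal rank.

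To produce such an $L$, I would use the inverse-system identification $A_2\simeq\langle\,\partial^2 f/\partial z_i\partial z_j\,\rangle\subset R_2$, where $z_0,\dots,z_4$ denote $x_0,x_1,x_2,u,v$; under it, for $L=a_0X_0+a_1X_1+a_2X_2+a_3U+a_4V\in A_1$, the image of $\bullet L\colon A_1\to A_2$ is spanned by the five quadrics $X_0\big(L(f)\big),X_1\big(L(f)\big),X_2\big(L(f)\big),U\big(L(f)\big),V\big(L(f)\big)$. Writing $D=a_3U+a_4V$ one computes
$$L(f)=\big(a_0 g_1+a_1 g_2+a_2 g_3+D(B)\big)+\sum_{j=0}^{2}x_j\,D(g_{j+1}),$$
so $X_j\big(L(f)\big)=D(g_{j+1})$ for $j=0,1,2$ --- pure binary quadrics in $u,v$ --- whereas $U\big(L(f)\big)$ and $V\big(L(f)\big)$ carry the $x$-linear parts $\sum_j x_j\,U\big(D(g_{j+1})\big)$ and $\sum_j x_j\,V\big(D(g_{j+1})\big)$. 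I would then pick $[a_3:a_4]\in\P^1$ so that $(a_4 u-a_3 v)^3\notin\langle g_1,g_2,g_3\rangle$; this is possible because the twisted cubic $\{\,[(a_4 u-a_3 v)^3]\,\}\subset\P(\K[u,v]_3)=\P^3$ is non-degenerate, hence not contained in the plane $\P(\langle g_1,g_2,g_3\rangle)$. Since $\ker\!\big(D|_{\K[u,v]_3}\big)=\langle(a_4 u-a_3 v)^3\rangle$, this makes $D(g_1),D(g_2),D(g_3)$ linearly independent in $\K[u,v]_2$; moreover the tuples $\big(U(D(g_j))\big)_j$ and $\big(V(D(g_j))\big)_j$ are $\K$-linearly independent, since any nontrivial relation would confine all $D(g_j)$ to a single one-dimensional space of quadrics. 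Separating $x$-degrees then shows $D(g_1),D(g_2),D(g_3),U\big(L(f)\big),V\big(L(f)\big)$ are linearly independent, i.e.\ $\bullet L\colon A_1\to A_2$ is injective; in particular $h_2\ge 5$, the map is of maximal rank, and $A$ satisfies the $WLP$. (For $f$ as in the Example preceding the statement, $\langle g_1,g_2,g_3\rangle=\langle u^3,u^2v,uv^2\rangle$ and $[a_3:a_4]=[1:1]$ works, recovering $L=U+V$.)

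The hard part will be the input from Gordan--Noether theory: one must be certain that the $\P^4$ classification places \emph{every} degree-$4$ non-cone with vanishing Hessian into the normal form above --- in particular that the summand $B(u,v)$ is the only extra freedom beyond the $GNP$-polynomials of type $(2,2,1,3)$ --- so that the computation is genuinely exhaustive. (An alternative, classification-free route would be to observe that when $\hess_f=0$ the five-dimensional space $\{\bullet L : L\in A_1\}\subset\operatorname{Hom}(A_1,A_2)$ cannot consist only of non-maximal-rank maps, which reduces to Flanders/Atkinson-type results on linear spaces of bounded-rank matrices; but exploiting the symmetry of the multiplication there is itself delicate, so the classification route seems cleaner.) Once the normal form is granted, the rest is the elementary linear algebra sketched above, the Poincaré-duality reduction, and the mild twisted-cubic genericity argument --- none of which presents a real difficulty.
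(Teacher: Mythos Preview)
Your overall strategy coincides with the paper's: split on whether $\hess_f$ vanishes, and in the vanishing case invoke the Gordan--Noether classification in $\P^4$ to put $f$ into the normal form $f=x_0g_1+x_1g_2+x_2g_3+B(u,v)$ with $g_1,g_2,g_3\in\K[u,v]_3$ linearly independent. Where you diverge is in the endgame. The paper analyses the polar image $\phi:\P^1\dashrightarrow\P^2$, $(u:v)\mapsto(g_1:g_2:g_3)$, as a projection of the twisted cubic, splits into the three projective types (conic, cuspidal cubic, nodal cubic), writes each in a fixed normal form, and then checks by hand that $L=U+V$ (resp.\ $L=V$) makes both $\bullet L:A_1\to A_2$ injective and $\bullet L:A_2\to A_3$ surjective. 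Your argument is more uniform: you use Poincar\'e duality to reduce to injectivity of a single map $\bullet L:A_1\to A_2$, and you produce a good $L$ for \emph{every} linearly independent triple $(g_1,g_2,g_3)$ at once by choosing $[a_3:a_4]$ so that $(a_4u-a_3v)^3\notin\langle g_1,g_2,g_3\rangle$, via non-degeneracy of the twisted cubic in $\P(\K[u,v]_3)$. This avoids the case split entirely and is cleaner; the paper's route, on the other hand, makes the Lefschetz element completely explicit in each projective type.

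One point you skip that the paper does treat: the Gordan--Noether classification in $\P^4$ (Theorem~\ref{thm:GN3}) is stated for \emph{reduced} $f$. The paper first rules out non-reduced $f$ with $\hess_f=0$ by passing to the radical $\tilde f=\sqrt f$, invoking \cite[Thm.~1]{DP} to get $\hess_{\tilde f}=0$, and then deriving a contradiction from $\deg\tilde f\in\{2,3\}$. You flag the classification input as ``the hard part'' but do not address reducedness, so as written your argument has a small gap there; it is easily filled by the paper's Dimca--Papadima step.
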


\begin{proof} By hypothesis $f \in \K[x_0,\ldots,x_4]$ and it depends on all the variables. 

If $\hess_f \neq 0$, then $$\bullet L^2: A_1 \to A_3$$ is an isomorphism, hence $\bullet L : A_1 \to A_2$ is injective and $\bullet L : A_2 \to A_3$ is surjective. 
 Therefore the result follows. 
 
  If $\hess_f = 0$, we claim that $f$ must be a reduced polynomial. On the contrary, if we take $\tilde{f} = \sqrt{f}$, the radical of $f$, 
then $\tilde{f}$ does not define a cone, $\deg(\tilde{f})=2,3$ and $\hess_{\tilde{f}}=0$ by Theorem \cite[Thm. 1]{DP}. If $\deg(\tilde{f})=2$ we have a contradiction, since Hesse's claim is true for quadratic polynomials. 
The other possibility is that $\deg(\tilde{f})=3$. In this case, $\tilde{f}=l_1l_2l_3$ is a product of three independent linear forms, which defines a cone, and we have a contradiction again.
So we can assume that $f$ is a reduced polynomial. By the classification Theorem of Gordan and Noether in $\P^4$, Theorem \ref{thm:GN3}, up to a projective transformation $f$ 
must be of the form
 $$f=x_0f_0+x_1f_1+x_2f_2+h,$$
 where $f_i \in \K[u,v]_3$ and $h \in \K[u,v]_4$. It is easy to see that if we change $h \in \K[u,v]_4$ the Hessian is still zero, so we can suppose that $f$ is irreducible. 
 Consider the map $\phi:\P^1 \dashrightarrow \P^2$ given by $\phi(u:v)=(f_0:f_1:f_2)$.
 The image of $\phi$, $Z = \overline{\phi(\P^1)}$ is a rational curve of degree two or three. In fact, it is a projection of the twisted cubic $\mathcal{V}_3(\P^1) \subset \P^3$
 from a point. We have only three possibilities:
 \begin{enumerate}
  \item[(i)] Projection from an internal point. In this case $Z \subset \P^2$ is a conic. Up to projective transformations $Z= V(z^2-xy) \subset \P^2$, 
  $f_0=u^3, f_1=uv^2, f_2 = u^2v$. 
  In this case $$f=x_0u^3+x_1uv^2+x_2u^2v+h(u,v).$$
  Taking $L=U+V \in A_1$ one can verify directly that $\bullet L: A_1 \to A_2$ is injective. The map $\bullet L : A_2 \to A_3$ is surjective since the image of $\{X_1U,X_1V,U^2,UV,V^2\}$ 
  generates $A_3$. Therefore $A$ satisfy the WLP.
 
 \item[(ii)] An external projection whose center belongs to the tangent surface of the twisted cubic, $T \mathcal{V}_3(\P^1)$. In this case $Z \subset \P^2$ is a cuspidal cubic. 
 Up to a projective transformation $Z = V(zy^2-x^3) \subset \P^2$ and $f_0=u^2v,f_1=u^3,f_2=v^3$. In this case 
 $$f=x_0u^2v+x_1u^3+x_2v^3+h(u,v).$$
 Taking $L=U+V \in A_1$ one can check that $\bullet L: A_1 \to A_2$ is injective. The map $\bullet L : A_2 \to A_3$ is surjective since the image of $\{X_0U,X_0V,U^2,UV,V^2\}$ 
  generates $A_3$. Therefore $A$ satisfy the WLP.
 \item[(iii)] A general external projection. In this case $Z \subset \P^2$ is a nodal cubic curve. Up to a projective transformation $Z = V(zy^2-x^2(x+z)) \subset \P^2$ and 
 $f_0=v(u^2-v^2), f_1=u(u^2-v^2), f_2=v^3$. In this case  
  $$f=x_0v(u^2-v^2)+x_1u(u^2-v^2)+x_2v^3+h(u,v).$$
 Taking $L=V \in A_1$ one can check that $\bullet L: A_1 \to A_2$ is injective and $\bullet L : A_2 \to A_3$ is surjective since the image of $\{X_0U,X_0V,U^2,UV,V^2\}$ 
  generates $A_3$. Therefore $A$ satisfy the WLP.
 \end{enumerate}

 \end{proof}

\begin{lema}\label{lema:wlp}
 Let $f \in R = \K[x_0,\ldots,x_n,u_1,\ldots,u_m]$ of degree $\deg(f) = k+d$ with $k<d$. Suppose there exist $\alpha_1,\ldots,\alpha_s \in A_k$ linearly independent differential operators such that for all 
 $L \in A_1$ we have  $f_{L \alpha_i} \in \K[u_1,\ldots,u_m]$. If $s >\binom{m+d-2}{d-1}$, then the map $\bullet L: A_k \to A_{k+1}$ is not injective. 
\end{lema}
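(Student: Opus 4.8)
The plan is to mimic the row-dependence argument of Proposition \ref{prop:key}, but now applied to the matrix of the multiplication map $\bullet L\colon A_k \to A_{k+1}$ rather than to a square Hessian matrix, so that we need strictly fewer independent derivatives landing in $\K[u_1,\ldots,u_m]$. First I would fix an arbitrary $L \in A_1$ and suppose, for contradiction, that $\bullet L$ is injective. Let $\tilde Q = \K[U_1,\ldots,U_m]$ and $B = \tilde Q/(\Ann(f)\cap \tilde Q)$, the Artinian Gorenstein algebra attached to the restriction of $f$ to the essential variables; note that $B$ is the inverse system of a form of degree $\le d+k$ but whose relevant graded piece in degree $d-1$ has dimension $\dim B_{d-1} \le \dim \K[U_1,\ldots,U_m]_{d-1} = \binom{m+d-2}{d-1}$.

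The key step is the following. For each $i$, the element $L\alpha_i \in A_{k+1}$ satisfies $f_{L\alpha_i} \in \K[u_1,\ldots,u_m]_{d-1}$ by hypothesis. I would build an ordered $\K$-basis of $A_{k+1}$ whose \emph{first} vectors are $L\alpha_1,\ldots,L\alpha_s$ (these are linearly independent in $A_{k+1}$ precisely because $\bullet L$ is injective and the $\alpha_i$ are independent in $A_k$) and whose \emph{last} vectors form a basis $\mathcal B = \{\beta_1,\ldots,\beta_r\}$ of $B_{k+1}$, completing with other monomials in between. Pairing each $L\alpha_i$ against this basis via the Poincaré duality form $A_{k+1}\times A_{d-k-1}\to A_d$ (more precisely, looking at the row of the catalecticant/Hessian-type matrix $\big(\delta(L\alpha_i)(f)\big)$ indexed by a basis of $A_{d-k-1}$), the same computation as in Proposition \ref{prop:key} shows that the entry against any $\delta$ that involves one of the superfluous variables $X_0,\ldots,X_n$ vanishes: indeed $\delta(L\alpha_i(f)) = L\alpha_i(\delta(f))$ and $L\alpha_i(f)\in\K[u_1,\ldots,u_m]$ is killed by any operator touching $X_0,\ldots,X_n$. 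So the $i$-th such row is supported only on the columns indexed by a basis of $B_{d-k-1}$, and there are at most $\dim B_{d-k-1} \le \binom{m+d-2}{d-1}$ of those (using $k+1 \le d$, so that $d-k-1 \ge 0$ and $B$ in that degree is a quotient of $\K[U_1,\ldots,U_m]_{d-k-1}$, whose dimension is $\binom{m+d-k-2}{d-k-1} \le \binom{m+d-2}{d-1}$).

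Now the count: we have $s$ vectors $L\alpha_1,\ldots,L\alpha_s$ sitting inside the subspace $L\cdot A_k \cap B_{k+1}$-flavored piece, all of whose images under the duality pairing are supported on a space of dimension $\le \binom{m+d-2}{d-1} < s$. Hence these $s$ rows are linearly dependent over $\K$, which forces a nonzero $\K$-linear combination $\sum c_i L\alpha_i$ to pair to zero against all of $A_{d-k-1}$; by Poincaré duality (Proposition \ref{G=PD}) this means $\sum c_i L\alpha_i = 0$ in $A_{k+1}$, i.e. $L\cdot\big(\sum c_i \alpha_i\big) = 0$ with $\sum c_i\alpha_i \ne 0$ in $A_k$ (the $\alpha_i$ being independent there). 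This contradicts the assumed injectivity of $\bullet L$. Since $L$ was arbitrary, no $L\in A_1$ gives an injective $\bullet L\colon A_k\to A_{k+1}$, as claimed.

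The main obstacle I anticipate is bookkeeping around the auxiliary algebra $B$: one must be careful that $\Ann(f)\cap\tilde Q$ really is the annihilator of the appropriate ``$u$-part'' of $f$ and that $\dim B_j \le \binom{m+j-1}{j}$ with the right index $j = d-k-1$, and that this is $\le \binom{m+d-2}{d-1}$ — the latter inequality is monotonicity of binomial coefficients in the bottom entry for $1\le d-k-1 \le d-1$ and needs the hypothesis $k<d$ (and $k\ge 1$, implicit). A secondary subtlety is justifying that linear dependence of the \emph{rows} of the pairing matrix is equivalent to a genuine relation in $A_{k+1}$; this is exactly the non-degeneracy of the Poincaré pairing, so it is immediate, but it should be stated cleanly rather than invoked as in the Hessian case where one works over the field of fractions.
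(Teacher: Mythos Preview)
Your argument has the right idea but carries an index error and is more elaborate than needed. The socle degree of $A$ is $\deg(f)=k+d$, not $d$, so the Poincar\'e pairing you want is $A_{k+1}\times A_{d-1}\to A_{k+d}$, not $A_{k+1}\times A_{d-k-1}\to A_d$. With that correction your row-dependence argument goes through, and in fact simplifies: the nonzero entries of the row for $L\alpha_i$ are indexed by a basis of $B_{d-1}$, and $\dim B_{d-1}\le\dim\K[U_1,\ldots,U_m]_{d-1}=\binom{m+d-2}{d-1}$ directly --- no monotonicity of binomial coefficients is needed. As written, your pairing $A_{k+1}\times A_{d-k-1}$ does not land in the socle, so it is not nondegenerate and the final step (``by Poincar\'e duality this means $\sum c_i L\alpha_i=0$'') would not be justified.

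The paper's proof avoids Poincar\'e duality and the auxiliary algebra $B$ entirely. It simply observes that $L\alpha=0$ in $A_{k+1}$ if and only if $f_{L\alpha}=0$ in $R$ (this is just the definition of $\Ann(f)$), and then considers the linear map
\[
\phi\colon W\longrightarrow \K[u_1,\ldots,u_m]_{d-1},\qquad \alpha\longmapsto f_{L\alpha},
\]
where $W=\bigoplus_{i=1}^s\K\alpha_i\subset Q_k$ is spanned by chosen representatives. Since $\dim W=s>\binom{m+d-2}{d-1}=\dim\K[u_1,\ldots,u_m]_{d-1}$, the map $\phi$ has nontrivial kernel, and any nonzero element there is a nonzero element of $\ker(\bullet L)$ in $A_k$. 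Your Poincar\'e duality step, once the index is fixed, unwinds to exactly this: nondegeneracy of $A_{k+1}\times A_{d-1}\to\K$ is equivalent to injectivity of the evaluation $A_{k+1}\hookrightarrow R_{d-1}$, $\gamma\mapsto f_\gamma$, which is immediate from the definition of $A=Q/\Ann(f)$. So the ``main obstacle'' you anticipate around $B$ never arises --- $B$ is not needed.
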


\begin{proof} Let $Q=\K[X_0,\ldots,X_n,U_1,\ldots,U_m]$ be the ring of differentials and consider the multiplication $\bullet L: Q_k \to Q_{k+1}$. 
Let $A_k = Q_k/Q_k \cap \operatorname{Ann}(f)$. Consider also the evaluation map $ev: Q_k \to R_{D-k}$, sending $\alpha$ to $f_{\alpha}$. 
 An element $\alpha \in A_{k}\setminus \{0\}$ is in the kernel of the multiplication map $\bullet L: A_{k} \to A_{k+1}$ if and only if there is a representative, 
 which by abusing  notation we denote by $\alpha \in Q_k$, whose image under the composition $\phi_k:Q_k \to Q_{k+1} \to R_{d-1}$ is zero. \\
 Choose for $j=1,\ldots,s$, $\alpha_j \in Q_k$ a representative whose image in $A_k$ is $\alpha_j$. 
 Let $$W = \bigoplus_{i=1}^s \K \alpha_i \subset Q_k.$$ By  hypothesis $\dim W = s>\binom{m+d-2}{d-1}$. Since the image of the restriction of $\phi_k$ to $W$ 
 lies in $\K[u_1,\ldots,u_m]_{d-1}$ and since $ \dim \K[u_1,\ldots,u_m]_{d-1} = \binom{m+d-2}{d-1}$ the result follows.
\end{proof}

\begin{lema}\label{lema:unimodalvariaveisseparadas}
Let $f \in R=\K[x_0,\ldots,x_n,u_1,\ldots,u_m]_d$ and suppose $f=g+h$ with $g \in \K[x_0,\ldots,x_n]_d$ and $h \in \K[u_1,\ldots,u_m]_d$. 
 Let $Q$ be the associated ring of differential operators and set $A(f)=Q/\Ann(f)$, $A(g)=Q/\Ann(g)$ and $A(h)=Q/\Ann(h)$. 
 Then, the Hilbert vector of $A(f)$ is given by $$\dim(A_k(f))=\dim(A_k(g))+\dim(A_k(h)).$$
 For $k=1,\ldots,m+n$. In particular, if $\Hilb(A(g))$ and $\Hilb(A(h))$ are unimodal, then $\Hilb(A(f))$ is also unimodal.
\end{lema}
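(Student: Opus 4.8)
The plan is to compare the three annihilators degree by degree through evaluation maps. For $\phi\in\{f,g,h\}$ and $0\le k\le d$, let $\operatorname{ev}_\phi\colon Q_k\to R_{d-k}$ be the $\K$-linear map $\alpha\mapsto\alpha(\phi)$; since $\ker(\operatorname{ev}_\phi)=\Ann(\phi)_k$ one has $\dim A_k(\phi)=\dim\operatorname{Im}(\operatorname{ev}_\phi)$, so it is enough to understand the three images inside $R_{d-k}$.

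First I would split the monomial basis of $Q_k$ into the monomials in $X_0,\dots,X_n$ alone (spanning a copy of $\K[X_0,\dots,X_n]_k$), the monomials in $U_1,\dots,U_m$ alone (spanning $\K[U_1,\dots,U_m]_k$), and the remaining ``mixed'' monomials. Writing $\alpha=\alpha_X+\alpha_U+\alpha_{\mathrm{mix}}$ accordingly and using that every $U_j$ kills $g$ and every $X_i$ kills $h$, one gets $\alpha(g)=\alpha_X(g)$, $\alpha(h)=\alpha_U(h)$ and $\alpha(f)=\alpha_X(g)+\alpha_U(h)$. Hence $\operatorname{Im}(\operatorname{ev}_f)\subseteq\operatorname{Im}(\operatorname{ev}_g)+\operatorname{Im}(\operatorname{ev}_h)$; conversely any $p\in\operatorname{Im}(\operatorname{ev}_g)$ is $\alpha_X(g)$ for some pure-$X$ operator $\alpha_X\in Q_k$, and then $\operatorname{ev}_f(\alpha_X)=\alpha_X(g)+\alpha_X(h)=p$, so $\operatorname{Im}(\operatorname{ev}_g)\subseteq\operatorname{Im}(\operatorname{ev}_f)$, and symmetrically for $h$. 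Thus $\operatorname{Im}(\operatorname{ev}_f)=\operatorname{Im}(\operatorname{ev}_g)+\operatorname{Im}(\operatorname{ev}_h)$.

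Now for $1\le k\le d-1$ the degree $d-k$ is positive, so $\K[x_0,\dots,x_n]_{d-k}\cap\K[u_1,\dots,u_m]_{d-k}=0$, and since $\operatorname{Im}(\operatorname{ev}_g)$ lies in the first space and $\operatorname{Im}(\operatorname{ev}_h)$ in the second, the sum above is direct; this gives $\dim A_k(f)=\dim A_k(g)+\dim A_k(h)$. The one point requiring care is exactly this use of $d-k\ge1$: at $k=0$ and $k=d$ the two summands are scalars that can cancel, and indeed $\dim A_0=\dim A_d=1$ for all three algebras while the naive sum gives $2$ — a harmless discrepancy for what follows (the identity should be read in the range $1\le k\le d-1$).

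Finally, $A(f)$, $A(g)$, $A(h)$ are Artinian Gorenstein of socle degree $d$, so all three Hilbert vectors are symmetric about $d/2$, and it suffices to see $\Hilb(A(f))$ is non-decreasing on $\{0,1,\dots,\lfloor d/2\rfloor\}$. For $1\le k\le\lfloor d/2\rfloor$ (contained in $\{1,\dots,d-1\}$ when $d\ge2$) the identity $\dim A_k(f)=\dim A_k(g)+\dim A_k(h)$ holds, and each symmetric unimodal sequence $\Hilb(A(g))$, $\Hilb(A(h))$ is non-decreasing on that range, hence so is their sum; moreover $\dim A_0(f)=1\le\dim A_1(g)+\dim A_1(h)=\dim A_1(f)$ since a nonzero form of positive degree has $\dim A_1\ge1$ (if $g$ or $h$ vanishes the assertion is immediate). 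With symmetry this shows $\Hilb(A(f))$ is unimodal. I do not expect a real obstacle: the work lies in the monomial-type bookkeeping of the second paragraph and the endpoint caveat of the third.
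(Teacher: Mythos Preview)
Your argument is correct. The route differs somewhat from the paper's: the paper observes that $\Ann(f)=\Ann(g)\cap\Ann(h)$ (which, as in your second paragraph, relies on the fact that for $1\le k\le d-1$ the equations $\alpha(g)+\alpha(h)=0$ force both summands to vanish since they lie in complementary monomial subspaces of $R_{d-k}$), and then invokes an isomorphism $(Q/I\cap J)_k\simeq (Q/I)_k\oplus (Q/J)_k$. That last step is not a general fact about pairs of ideals --- it needs $(I+J)_k=Q_k$, which holds here because $\Ann(g)\supseteq (U_1,\dots,U_m)$ and $\Ann(h)\supseteq (X_0,\dots,X_n)$ --- so your image-side computation via $\operatorname{ev}_\phi$ is really the dual of the paper's kernel-side computation, but spelled out with the necessary care. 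Your version also makes explicit the endpoint caveat at $k=0,d$ and actually argues the unimodality claim from symmetry, both of which the paper leaves to the reader. One cosmetic point: in the line $\operatorname{ev}_f(\alpha_X)=\alpha_X(g)+\alpha_X(h)=p$ you are silently using $\alpha_X(h)=0$, which holds because $\alpha_X$ is a pure-$X$ operator of positive degree acting on a polynomial in the $u$-variables; it would be cleaner to say so.
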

\begin{proof}
Notice that $\Ann(f) = \Ann(g)\cap \Ann(h)$. \
 For $I,J \subset  Q$ homogeneous ideals, we have for each degree $k=1,\ldots,d-1$ 
 $$(\frac{Q}{I\cap J})_k \simeq (\frac{Q}{I})_k \oplus (\frac{Q}{J})_k$$ as $\K$-vector spaces.  
\end{proof}

We now are in position to prove one of our main results.

\begin{thm} \label{thm:wlp}
 For each pair $(N,d)\not\in\{(3,3),(3,4),(4,4), (3,6)\}$ with $N \geq 3$ and with $d \geq 3$  there exist standard graded Artinian Gorenstein 
 algebras $A = \displaystyle  \oplus_{i=0}^d A_i$ of codimension $N+1$ and socle degree $d$, with unimodal Hilbert vector, $\Hilb(A)$ and that do not satisfy the Weak Lefschetz Property.
\end{thm}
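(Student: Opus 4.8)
The plan is to split on the parity of $d$. For $d$ odd the statement is exactly Corollary \ref{cor:wlpodd}, and the one pair it omits, $(3,3)$, in fact enjoys the $SLP$ by Gordan--Noether (Theorem \ref{WHessiancor}); the same theorem disposes of $(3,4)$, and $(4,4)$ has the $WLP$ by Proposition \ref{prop:44}. Hence it remains to realize, for each even $d=2q$ and each $N$ with $(N,2q)\notin\{(3,4),(4,4),(3,6)\}$, a standard graded Artinian Gorenstein algebra of codimension $N+1$ and socle degree $2q$, with unimodal Hilbert vector, that fails the $WLP$. By the reduction recalled in the Remark before Proposition \ref{prop:44} (see also \cite{MMN}), for socle degree $2q$ the $WLP$ fails as soon as $\bullet L\colon A_{q-1}\to A_q$ is non-injective for every $L\in A_1$, and it is this that I will arrange.

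Let $s_0=s_0(q)$ be the least integer with $s_0(q-1)\ge q+2$, so $s_0=4,3,2$ for $q=2,3,\ge 4$ respectively; thus $N\ge s_0+1$ means $N\ge 5$ when $d=4$, $N\ge 4$ when $d=6$, and $N\ge 3$ when $d\ge 8$, which is precisely the admissible range for even $d$. For such $N$ I would take
$$
f=\sum_{j=1}^{s_0}x_jP_j(u_1,u_2)+h(u_1,u_2)+\sum_{i=1}^{N-1-s_0}z_i^{\,2q},
$$
where $P_1,\dots,P_{s_0}$ are general linearly independent forms of degree $2q-1$ in $u_1,u_2$ (possible since $s_0\le 2q=\dim_\K\K[u_1,u_2]_{2q-1}$), $h$ a general form of degree $2q$ in $u_1,u_2$, and the $z_i$ further independent variables. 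For general $P_j,h$ the form $f$ involves all $N+1$ of its variables, so $A=A(f)=Q/\Ann(f)$ has codimension $N+1$.

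For unimodality I would write $f=g+\sum_i z_i^{2q}$ with $g=\sum_jx_jP_j+h$ and apply Lemma \ref{lema:unimodalvariaveisseparadas}, getting $\Hilb(A(f))=\Hilb(A(g))+\Hilb(A(\sum_i z_i^{2q}))$; since $\Hilb(A(\sum_i z_i^{2q}))=(1,t,\dots,t,1)$ with $t=N-1-s_0$ is unimodal, everything rests on $\Hilb(A(g))$. As $g$ is linear in $x_1,\dots,x_{s_0}$, every differential operator of $X$-degree $\ge 2$ kills $g$; a direct genericity argument on the catalecticants of the $P_j$ and of $h$ then gives
$$
\dim_\K A_k(g)=(k+1)+\min\bigl(s_0k,\,2q-k+1\bigr),\qquad 0\le k\le q,
$$
a function that strictly increases until it reaches the value $2q+2$ and is constant afterwards; by symmetry $\Hilb(A(g))$, hence $\Hilb(A(f))$, is unimodal.

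It remains to force the failure of the $WLP$. The $s_0(q-1)$ operators $X_jU_1^aU_2^b$ with $1\le j\le s_0$ and $a+b=q-2$ all lie in $Q_{q-1}$ and satisfy $\bigl(X_jU_1^aU_2^b\bigr)(g)=U_1^aU_2^b(P_j)\in\K[u_1,u_2]$, and more generally $(L\,X_jU_1^aU_2^b)(g)\in\K[u_1,u_2]$ for every $L\in A_1(g)$; by genericity of the $P_j$ they span a $(q+2)$-dimensional subspace of $A_{q-1}(g)$. Since $q+2>\binom{q+1}{q}$, Lemma \ref{lema:wlp}, applied to $g$ with $m=2$, $k=q-1$ and second parameter $q+1$, shows that $\bullet\ell\colon A_{q-1}(g)\to A_q(g)$ is non-injective for every $\ell\in A_1(g)$. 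Finally, $A_\bullet(f)=A_\bullet(g)\oplus A_\bullet(\sum_i z_i^{2q})$ (Lemma \ref{lema:unimodalvariaveisseparadas}) is compatible with multiplication by a linear form $L=\ell+\ell_z$, under which $\bullet L$ becomes $\bullet\ell\oplus\bullet\ell_z$; as $\bullet\ell$ is never injective, neither is $\bullet L\colon A_{q-1}(f)\to A_q(f)$, so $A(f)$ fails the $WLP$. The main obstacle I anticipate is the Hilbert-function computation for $g$: arranging the several genericity conditions (the catalecticants of the $P_j$ injective up to degree $q-1$ with images in general position, those of $h$ injective up to degree $q$, and the above differential operators linearly independent in $A_{q-1}(g)$) to hold simultaneously, and checking that they fail exactly in the three excluded even pairs, where $N<s_0(q)+1$.
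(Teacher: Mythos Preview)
Your strategy is the paper's: reduce to even $d=2q$ via Corollary~\ref{cor:wlpodd}, take $f$ linear in some $x_j$'s with binary coefficients in $\K[u_1,u_2]$ plus a separated-variables tail, force non-injectivity of $\bullet L:A_{q-1}\to A_q$ through Lemma~\ref{lema:wlp}, and deduce unimodality via Lemma~\ref{lema:unimodalvariaveisseparadas}. Your parameter $s_0(q)\in\{4,3,2\}$ neatly unifies the three even subcases $d=4$, $d=6$, $d\ge 8$ that the paper handles one at a time, and in fact $s_0(q)$ coincides with the number of $x$-variables the paper uses in each subcase; the operators you feed into Lemma~\ref{lema:wlp} are the same family $X_jU_1^aU_2^b$ the paper uses.

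The only real divergence is in how the Hilbert vector of the ``core'' $g=\sum x_jP_j+h$ is controlled. You take the $P_j$ general and assert the formula $\dim A_k(g)=(k+1)+\min(s_0k,\,2q-k+1)$ by a catalecticant/genericity argument; the paper instead fixes explicit monomials $P_j$ and checks $\Hilb(A(g))$ directly for $d=4,6$ and by invoking the Iarrobino--Srinivasan unimodality theorem \cite{IS} for $d\ge 8$. Your formula is correct for generic $P_j,h$ (the verification is routine: the relevant partial-derivative maps on binary forms have maximal rank, and for $s_0\ge 2$ the kernels for the different $P_j$ intersect trivially, so no cross-relations arise between $X$-degree-$0$ and $X$-degree-$1$ operators), but as you yourself flag, it is asserted rather than written out. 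Once that gap is filled your route is self-contained and avoids the external appeal to \cite{IS}; the paper's explicit monomials make every verification concrete at the cost of three ad hoc constructions.
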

\begin{proof} For $N=3$ and $d=3,4$ the impossibility comes from Corollary \ref{WHessiancor}. 
The case $(3,6)$ follows by \cite[Cor. 3.12]{BMMNZ} which includes also the cases $(3,3)$ and $(3,4)$. In the case $(4,4)$ Proposition \ref{prop:44} yields that $A$ satisfies the WLP. 

Corollary \ref{cor:wlpodd} yields the result for odd $d$ so we can restrict ourselves to the even case $d=2q \geq 4$ and $N \geq 3$.

For $d=4$ we assume $N \geq 5$. Consider the polynomials of the form 
$$f=x_2u^3+x_3u^2v+x_4uv^2+x_5v^3+g(u,v)+h(x_6,\ldots,x_N)$$
and notice that $X_2,X_3,X_4,X_5 \in A_1$ are linearly independent. Indeed, $f_{X_2}=u^3$, $f_{X_3}=u^2v$, $f_{X_4}=uv^2$ and $f_{X_5}=v^3$. Notice also that 
$f_{LX_i} \in \K[u,v]_2$. In fact, if $$L=a_0U+a_1V+a_2X_2+a_3X_3+a_4X_4+a_5X_5+a_6X_6+\ldots+a_NX_N \in\ A_1,$$ then 
$LX_i = a_0UX_i+a_1VX_i \in A_2$ for $i=2,3,4,5$ and the claim follows. 
Since $s=4>3=\dim \K[U,V]_2$ we can apply Lemma \ref{lema:wlp} to deduce that  the map $\bullet L : A_1 \to A_2$ is not injective, proving 
that $A$ does not satisfy the WLP. On the other hand, taking $\tilde{f} = x_2u^3+x_3u^2v+x_4uv^2+x_5v^3+g(u,v)$, one can check that $\Hilb(A(f_1))=(1,6,6,6,1)$ for all 
$g \in \K[u,v]$. For a general $h \in \K[x_6,\ldots,x_N]$, $\Hilb(A(h))$ is unimodal. Hence, by Lemma \ref{lema:unimodalvariaveisseparadas}, $\Hilb(A(f))$ is unimodal. 

For $d=6$ we assume $N \geq 4$ and take the exceptional form given by 
$$f=x_2u^2v^3+x_3u^4v+x_4uv^4+g(u,v)+h(x_2,x_3,\ldots,x_N)$$
We have five linearly independent second order differentials $\alpha_1=X_2U, \alpha_2=X_2V,\alpha_3=X_3U,\alpha_4=X_3V, \alpha_5= X_4U \in A_2$ such that
$f_{\alpha_1}=uv^3$, $f_{\alpha_2}=u^2v^2$, $f_{\alpha_3}=u^4$, $f_{\alpha_4}=u^3v$ and $f_{\alpha_5}=v^4$. 
For all $$L = aU+bV+a_2X_2+a_3X_3+\ldots+a_NX_N$$ we have $L\alpha_i=aU\alpha_1+bV\alpha_i$ for $i=1,\ldots,5$. Therefore, by Lemma \ref{lema:wlp},
the map $\bullet L: A_2 \to A_3$ is not injective so that $A$ does not satisfy the WLP. We claim that $\Hilb(A(f))$ is unimodal. In fact, taking 
$f_1=x_2u^2v^3+x_3u^4v+x_4uv^4+g(u,v)$, $\Hilb(A(f))= (1,5,8 , 8, 8,5,1)$ for all $g \in \K[u,v]$. Choosing $h$ in such a way $\Hilb(A(h))$ is unimodal, 
we conclude that $\Hilb(A(f))$ is unimodal by Lemma \ref{lema:unimodalvariaveisseparadas}.  

For $d=2q \geq 8$, we assume $N \geq 3$. We investigate the following maps:
$$A_{q-1} \to A_q \to A_{q+1}$$
If $\dim A_q < \dim A_{q-1}$, then the Hilbert vector of $A$ is not unimodal and hence the algebra does not satisfy the WLP. So we can suppose  $\dim A_q \geq \dim A_{q-1}$ and 
we prove that $\bullet L: A_{q-1} \to A_q$ is not injective, which implies that  $A$ does not satisfy the WLP.
Let $f$ be the exceptional form
$$f=x_2u^{q-2}v^{q+1}+x_3u^{2q-3}v^2+g(u,v)+h(x_4,x_5,\ldots,x_N)$$
Letting  $\alpha_i=X_2U^iV^{q-2-i} \in A_{q-1}$ for $i=0,\ldots,q-2$,  we have $f_{\alpha_i}=a_iu^{q-2-i}v^{3+i}$.  Let 
$\beta_j=X_3V^jU^{q-2-j} \in A_{q-1}$ for $j=0,1,2$ so that, after remarking that  $2 \leq q-2$, we deduce $f_{\beta_j}=b_ju^{q-1+j}v^{2-j}$. 
Since the monomials $f_{\alpha_i}$ and $f_{\beta_j}$ are distinct for $i=0,\ldots,q-2$ and $j=0,1,2$,  they are linearly independent in $A_{q-1}$ so that
$W=\oplus \K \alpha_i \oplus \K \beta_i \subset Q_{q-1}$ has dimension $q+2$. For all $L \in A_1$ it is easy to check that $f_{L \alpha_i},f_{L \beta_i} \in \K[u,v]_{q}$. 
Since $\dim \K[u,v]_{q}= q+1$, we can apply 
Lemma \ref{lema:wlp} to deduce that the map $\bullet L:A_{q-1} \to A_q$ is not injective, proving the result. By other side $f_1=x_2u^{q-2}v^{q+1}+x_3u^{2q-3}v^2+g(u,v)$ 
define a Artinian Gorenstein algebra of codimension $4$ and such that $\dim A_2 = 7$, hence, by the main result of \cite{IS}, $\Hilb(A(f_1))$ is unimodal. 
Again, by Lemma \ref{lema:unimodalvariaveisseparadas} we conclude that for a general choice of $h$, $\Hilb(A(h))$ is unimodal, thus $\Hilb(A(f))$ is unimodal.
\end{proof}
 \medskip

 \section*{Appendix: Forms with vanishing Hessian, an overview} \label{ap}
 
 The aim of this appendix is to recall classical results and constructions of hypersurfaces with vanishing hessian whose original work was writen in German (see \cite{GN}) and Italian (see \cite{Pe, Pt1, Pt2, Pt3}). 
 Some of this classical work was revisited in \cite{CRS, GR, Wa1, Lo, GRu, Ru, dB}.  
 
 The interest in forms with vanishing hessian starts with Hesse's claim (\cite{Hesse1,Hesse2}) stating that a form of degree $d$, $f \in \K[x_0,x_1,\ldots,x_N]$ has vanishing hessian, $\hess_f=0$, if and only if 
 there exists a linear change of coordinates $\pi$ such that the transformed form $f_{\pi}$ does not depend on all the variables. Since the vanishing of the hessian is invariant by a linear change of coordinates, the ``if'' implication is trivial. 
 In degree $d=2$ Hesse's claim is trivial by diagonalizing the quadratic form. From now on we shall assume $d \geq 3$. Let $f$ be a reduced polynomial and let $X = V(f) \subset \P^N$ the hypersurface. 
 From a geometric point of view Hesse's claim can be restated as $\hess_f = 0 $ if and only if $X$ is a cone. \\ 
Hesse's claim is not true in general as it was observed by Gordan and Noether (\cite{GN}). 
More precisely,  in \cite{GN} the authors proved that Hesse's claim is true for $N\leq 3$ and they produced a series of counterexample for any $N \geq 4$ and for any $d \geq 3$. 
The easiest counterexample to Hesse's claim is $f=xu^2+yuv+zv^2 \in \K[x,y,z,u,v]$ and it was explicitly posed by Perazzo in \cite{Pe}, who called it {\it un esempio semplicissimo}. 
A modern proof of the next result can be found in (\cite{dB,Lo,GR,Wa2,Wa3Ru}).

\begin{thm}\label{thm:GN} \cite{GN} Let $X = V(f) \subset \P^N$, $N \leq 3$, be a hypersurface such that $\hess_f=0$. Then $X$ is a cone.
\end{thm}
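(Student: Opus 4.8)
The plan is to transfer the problem to the geometry of the polar (gradient) map and then run the classical Gordan--Noether argument, whose point is that in dimension $N\le 3$ there is not enough room for a hypersurface that is not a cone yet has vanishing Hessian. First I would set up the dictionary. Write $f_i=\partial f/\partial x_i$ and let $\phi=\nabla f=(f_0,\dots,f_N)\colon \P^N\dashrightarrow \P^N$ be the polar map; its Jacobian matrix is the Hessian matrix $\Hess_f$, so $\hess_f=0$ is equivalent to $\phi$ failing to be dominant, i.e.\ to $k:=\dim\overline{\phi(\P^N)}\le N-1$, equivalently to the algebraic dependence of $f_0,\dots,f_N$. On the other hand $X=V(f)$ is a cone precisely when $f_0,\dots,f_N$ are linearly dependent, since a relation $\sum_i c_i f_i=0$ says that $f$ is invariant along the direction $[c_0:\cdots:c_N]$ --- a vertex --- and conversely. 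So one assumes $\hess_f=0$ while $X$ is not a cone (we may take $f$ reduced), and must reach a contradiction when $N\le 3$.

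The structural heart of the argument is the Gordan--Noether description of a degenerate polar map: when $\hess_f=0$ the closure of a general fibre of $\phi$ is a linear subspace $\Lambda\subseteq\P^N$ of dimension $N-k$, on which $\phi$ is constant, say $\phi|_\Lambda\equiv[q]$. Differentiating $f|_\Lambda$ along $\Lambda$ and using that $\nabla f$ is proportional to the fixed vector $q$ there, one sees that all partials of $f|_\Lambda$ are proportional to a single linear form, so that $f|_\Lambda$ is a power of a linear form and $X\cap\Lambda$ is a hyperplane of $\Lambda$ with multiplicity $d$. Since $k\le N-1$ we have $\dim\Lambda\ge1$; and the case $k=0$ is disposed of at once, for then $\phi$ is constant, the $f_i$ are all proportional to one form, and integrating via Euler's identity forces $f$ itself to be a power of a linear form, so that $X$ is a hyperplane, a cone.

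What remains is to treat $1\le k\le N-1$ with $N\le 3$, i.e.\ $(N,k)\in\{(2,1),(3,1),(3,2)\}$, and here the low dimension of $\P^N$ is exploited. For $(2,1)$ the spaces $\Lambda_q$ form a one--parameter family of lines in $\P^2$, each meeting $X$ in a single point of multiplicity $d$; distinguishing whether this family is a pencil through a point $v$ or sweeps out $\P^2$, and combining B\'ezout, the bound $\mult_p X\le d-1$ for irreducible $X$, and the Pl\"ucker bound on inflectional tangents, one finds that either $X$ is a union of lines through $v$ --- a cone --- or one reaches a numerical contradiction; in either case $X$ is a cone. For $(3,1)$ the fibres are planes cutting $X$ along a line of multiplicity $d$, and for $(3,2)$ they form a two--dimensional family of lines covering $\P^3$ each meeting $X$ in one $d$-fold point; in both cases one appeals to Gordan--Noether's classification of such configurations on surfaces in $\P^3$ to conclude that $X$ is a cone. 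The main obstacle is exactly the last case $(3,2)$: it is the configuration that, one dimension higher, genuinely produces the Gordan--Noether counterexamples in $\P^4$ --- for instance $f=xu^2+yuv+zv^2$ --- so the argument must use something special to $\P^3$, and this delicate surface analysis is the substantive content of the theorem, for which the detailed modern accounts cited above should be consulted.
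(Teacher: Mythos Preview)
The paper does not give its own proof of this theorem: it is stated in the Appendix as a classical result of Gordan and Noether, with the comment that ``a modern proof of the next result can be found in \cite{dB,Lo,GR,Wa2,Wa3}'' and no further argument. So there is no proof in the paper to compare your proposal against; the paper simply cites the literature, and your write-up is already more detailed than anything the paper provides.

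That said, your sketch is an outline rather than a proof, and you acknowledge this yourself in the final sentences. Two points are worth flagging. First, the assertion that the closure of a general fibre of the polar map is a \emph{linear} subspace is itself one of the substantive lemmas in Gordan--Noether theory (often called the Gordan--Noether identity, obtained by differentiating a polynomial relation among the $f_i$), and it should be stated as a cited fact rather than taken for granted. Second, your case analysis for $(N,k)\in\{(2,1),(3,1),(3,2)\}$ is really a list of headings: the $(2,1)$ paragraph gestures at B\'ezout and Pl\"ucker without a complete argument, and for $(3,1)$ and $(3,2)$ you explicitly defer to the references. This is honest, and in fact appropriate here since the paper does the same, but if you intend this as a self-contained proof it is not one; if you intend it as an expository sketch with pointers to \cite{GN,Lo,GR,Ru}, it serves that purpose and goes beyond what the paper itself offers.
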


In \cite{GN} the authors produced a series of counterexamples to Hesse's claim for each $N \geq 4$ and for each degree $d \geq 3$. 
The key point of the construction was to figure out that the vanishing of the Hessian is equivalent to the algebraic dependence among the partial derivatives 
(see {\it loc. cit.}). On the other hand, to be a cone is equivalent to the linear dependence among the partial derivatives. This result is sometimes referred as Gordan-Noether's criterion since it was implicitly used in \cite{GN}. 
A proof of it can be found in \cite{CRS} and in \cite[Chapter 7]{Ru}. 

\begin{prop} \label{prop:GNcriteria} \cite{GN} Let $f\in \K[x_0,\ldots,x_N]$ be a reduced polynomial and consider $X  = V(f) \subset \P^N$. Then

\begin{enumerate}
 \item[(i)] $X$ is a cone $ \Leftrightarrow f_{X_0},\ldots,f_{X_N} $ are linearly dependent;
 \item[(ii)] $\hess_f=0 \Leftrightarrow f_{X_0},\ldots,f_{X_N}$ are algebraically dependent.
\end{enumerate}

\end{prop}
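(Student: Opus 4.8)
The plan is to prove the two equivalences separately, reducing (i) to a one-parameter translation argument and (ii) to the classical Jacobian criterion for algebraic dependence in characteristic zero. Throughout I work at the level of the defining polynomial $f$; since $f$ is reduced, the geometric property ``$X$ is a cone'' is faithfully reflected by the corresponding property of $f$.

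For (i) I would argue as follows. A hypersurface $X=V(f)$ is a cone with vertex $p=[a_0:\cdots:a_N]$ precisely when $f$ is invariant under the translations $x\mapsto x+\lambda a$, that is $f(x+\lambda a)=f(x)$ for all $\lambda\in\K$. Differentiating this identity in $\lambda$ at $\lambda=0$ yields $\sum_{i=0}^N a_i f_{X_i}\equiv 0$, a nontrivial linear relation among the first partial derivatives. Conversely, if $\sum_i a_i f_{X_i}\equiv 0$ with $(a_i)\neq 0$, then $\tfrac{d}{d\lambda}f(x+\lambda a)=\sum_i a_i f_{X_i}(x+\lambda a)\equiv 0$, so $f(x+\lambda a)$ is constant in $\lambda$; since $f$ is homogeneous of positive degree, $f(\lambda a)=0$, whence $[a]\in X$ is a vertex and $X$ is a cone. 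Because a linear change of coordinates transforms the gradient by the (invertible) transpose of the substitution matrix, linear dependence of $f_{X_0},\dots,f_{X_N}$ over $\K$ is coordinate independent, so the argument does not depend on the chosen system of coordinates. This establishes (i).

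For (ii) the key observation is that the Hessian matrix is literally the Jacobian matrix of the polar (gradient) map $\grad f=(f_{X_0},\dots,f_{X_N})\colon \K^{N+1}\dashrightarrow\K^{N+1}$, because $\partial f_{X_i}/\partial x_j=\partial^2 f/\partial x_i\partial x_j=(\Hess_f)_{ij}$. Hence $\hess_f$ is exactly the Jacobian determinant of the tuple $f_{X_0},\dots,f_{X_N}$. I would then invoke the Jacobian criterion for algebraic dependence over a field of characteristic zero: for $g_0,\dots,g_N\in\K[x_0,\dots,x_N]$ the rank of the matrix $(\partial g_i/\partial x_j)$ over the fraction field $\K(x_0,\dots,x_N)$ equals $\operatorname{trdeg}_{\K}\K(g_0,\dots,g_N)$. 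In particular the $(N+1)\times(N+1)$ Jacobian determinant vanishes identically if and only if this transcendence degree is at most $N$, i.e. if and only if $g_0,\dots,g_N$ are algebraically dependent over $\K$. Specializing to $g_i=f_{X_i}$ gives $\hess_f=0$ if and only if $f_{X_0},\dots,f_{X_N}$ are algebraically dependent, which is (ii).

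The routine half of the Jacobian criterion (algebraic dependence $\Rightarrow$ vanishing Jacobian) I would prove directly: writing $g_i=f_{X_i}$ and choosing a nonzero $P$ with $P(g_0,\dots,g_N)=0$ of minimal total degree, the chain rule gives $\sum_{i=0}^N (\partial P/\partial y_i)(g_0,\dots,g_N)\,(\partial g_i/\partial x_j)=0$ for every $j$; since $\operatorname{char}\K=0$ at least one $\partial P/\partial y_i$ is a nonzero polynomial of strictly smaller degree, and minimality of $P$ then forces $(\partial P/\partial y_i)(g_0,\dots,g_N)\neq 0$ for some $i$, exhibiting a nontrivial left-kernel vector for $\Hess_f$ over $\K(x_0,\dots,x_N)$ and hence $\hess_f=0$. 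The main obstacle is the converse implication (vanishing Jacobian $\Rightarrow$ algebraic dependence): this is the substantive half of the criterion and is exactly where the characteristic-zero hypothesis is indispensable, so for it I would cite the standard statement (see \cite{CRS} and \cite[Chapter 7]{Ru}) rather than reprove it. Finally I would note that, since vanishing of $\hess_f$ and both linear and algebraic dependence of the $f_{X_i}$ are all invariant under linear changes of coordinates, every assertion is well posed independently of the chosen coordinate system.
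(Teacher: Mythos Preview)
The paper does not supply its own proof of this proposition; it records the statement as classical (attributed to \cite{GN}) and directs the reader to \cite{CRS} and \cite[Chapter 7]{Ru} for a proof. Your argument is correct and is precisely the standard one found in those references: part (i) via the one-parameter translation argument (using reducedness of $f$ to pass between the geometric and algebraic formulations), and part (ii) via the identification of $\Hess_f$ with the Jacobian of the gradient map $\grad f$ together with the Jacobian criterion for algebraic independence in characteristic zero. Since you too defer the substantive implication in (ii) to the same sources, your write-up is in effect a fleshed-out version of what the paper leaves entirely to citation; there is nothing to compare against.
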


\begin{thm}\label{thm:GN2} \cite{GN} For each $N \geq 4$ and $d \geq 3$ there exist irreducible hypersurfaces $X = V(f) \subset \P^N$, 
of degree $\deg(f) = d$, not cones, such that $\hess_f=0$.  
\end{thm}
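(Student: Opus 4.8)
The plan is to derive the statement from the Gordan--Noether criterion, Proposition \ref{prop:GNcriteria}: it suffices to exhibit, for every $N\geq 4$ and $d\geq 3$, an irreducible form $f$ of degree $d$ in $N+1$ variables whose partial derivatives $f_{x_0},\dots,f_{x_N}$ are algebraically dependent over $\K$ (which forces $\hess_f=0$) while being $\K$-linearly independent (which says $X=V(f)$ is not a cone). The mechanism creating the algebraic dependence is that as soon as three of the partials lie in a subring $\K[u,v]$ they are automatically algebraically dependent, since $\K(u,v)$ has transcendence degree $2$ over $\K$; the mechanism blocking a cone is to arrange that the remaining partials genuinely involve the other variables, so that no linear relation among all $N+1$ partials can exist.

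First I would settle the base case $N=4$. Writing $d-1=a+b$ with $a,b\geq 1$ (possible since $d\geq 3$), set
$$f=x_0u^{d-1}+x_1v^{d-1}+x_2\,u^av^b\in\K[x_0,x_1,x_2,u,v].$$
This $f$ is linear in $x_0,x_1,x_2$ with coefficient forms $u^{d-1},v^{d-1},u^av^b$ having no common factor, hence it is irreducible, in particular reduced, so that Proposition \ref{prop:GNcriteria} applies. Now $f_{x_0}=u^{d-1}$, $f_{x_1}=v^{d-1}$, $f_{x_2}=u^av^b$ lie in $\K[u,v]$, hence are algebraically dependent, and therefore so are all five partials; thus $\hess_f=0$ by Proposition \ref{prop:GNcriteria}(ii). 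On the other hand these three are distinct monomials in $u,v$ (here we use $d\geq 3$), while $f_u$ involves $x_0$, $f_v$ does not, and neither lies in $\K[u,v]$; hence the five partials are $\K$-linearly independent and $X$ is not a cone by Proposition \ref{prop:GNcriteria}(i).

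For $N\geq 5$ I would introduce fresh variables $z_1,\dots,z_{N-4}$ and replace $f$ by $f'=f+h$, where $h\in\K[z_1,\dots,z_{N-4}]_d$ is a nonzero form with $u\nmid h$. Since $f$ and $h$ involve disjoint sets of variables, $\Hess_{f'}$ is block diagonal with blocks $\Hess_f$ and $\Hess_h$, so $\hess_{f'}=\hess_f\cdot\hess_h=0$; moreover the partials of $f'$ are those of $f$ together with those of $h$, which together remain $\K$-linearly independent, so $V(f')\subset\P^N$ is not a cone. The one step needing a short verification is the irreducibility of $f'$: since $f'$ is linear in $x_0$ with coefficient $u^{d-1}$ and $u\nmid h$, a factorization argument like the one in the proof of Theorem \ref{cor:question1} shows $f'$ is irreducible. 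I expect no real obstacle here --- the substance of the theorem lies entirely in Proposition \ref{prop:GNcriteria}, and what remains is a direct construction. This $f$ is, in essence, the Perazzo/Gordan--Noether family; for $(N,d)=(4,3)$ it is, up to relabeling, Perazzo's \emph{esempio semplicissimo}, and the whole family can also be recovered from Proposition \ref{prop:formacanonica} with $k=1$, $m=2$.
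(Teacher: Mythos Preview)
Your proof is correct and follows essentially the paper's approach: both invoke the Gordan--Noether criterion (Proposition \ref{prop:GNcriteria}) on an explicit Perazzo-type polynomial, and the paper's own proof is merely a reference to the Perazzo/Permutti constructions (Theorems \ref{thm:perazzo}, \ref{thm:permutti}). Your block-diagonal ``separated variables'' extension to $N\geq 5$ is the same device used elsewhere in the paper (cf.\ the proof of Theorem \ref{cor:question1}); just make explicit that $h$ is chosen with $\K$-linearly independent partials (e.g.\ $h=z_1^d+\cdots+z_{N-4}^d$) so that the ``not a cone'' conclusion goes through.
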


\begin{proof}
 See Theorem \ref{thm:perazzo} and Theorem \ref{thm:permutti} for a short proof. 
\end{proof}

%Since Hesse's claim is not true in general, there are deeper conditions responsible for the vanishing of the Hessian. As a matter of fact even if we eliminate the effect of 
%the linear dependence among the $k$th derivatives we cannot avoid the vanishing of the Hessian. 
%To study  this we shall introduce the concept of relative $k$th Hessian, for short and from now on called $k$th Hessian of $f$.  
%This notion  was introduced in \cite{MW} and, as it was pointed out there, it can be applied to deal with algebraic properties of Artinian Gorenstein algebras (cf. Theorem \ref{WHessian}).

%\subsection{Classical hypersurfaces having vanishing Hessian}

For the reader's convenience we recall the classical constructions of Gordan and Noether, (\cite{GN}), Permutti, (\cite{Pt1,Pt2,Pt3}) and Perazzo (\cite{Pe}) from an algebraic point of view. 

\begin{defin} Let $X = V(f) \in \P^N$, $N \geq 4$ be an irreducible hypersurface not a cone. We say that $X$ is a Perazzo hypersurface of degree $d$ if 
$N=n+m$, $n,m \geq 2$ and $f \in \K[x_0,\ldots,x_n,u_1,\ldots,u_m]$ is a reduced polynomial of the form 
$$f=x_0g_0+\ldots+x_ng_n+h$$
where $g_i \in \K[u_1,\ldots,u_m]_{d-1}$ for $i=0,\ldots,n$ are algebraically dependent but linearly independent and 
$h \in \K[u_1,\ldots,u_m]_{d}$. The polynomial $f$ is called Perazzo polynomial.  
\end{defin}

\begin{thm}\cite{GN,Pe} \label{thm:perazzo} Perazzo hypersurfaces are not cones and have vanishing Hessian. 
\end{thm}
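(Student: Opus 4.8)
The plan is to establish Theorem \ref{thm:perazzo} by treating the two asserted properties separately, using the Gordan-Noether criterion Proposition \ref{prop:GNcriteria} throughout, so that both ``not a cone'' and ``$\hess_f=0$'' are reduced to statements about linear versus algebraic dependence of the partial derivatives of $f$. Write $f=x_0g_0+\ldots+x_ng_n+h$ with $g_i\in\K[u_1,\ldots,u_m]_{d-1}$ algebraically dependent but linearly independent and $h\in\K[u_1,\ldots,u_m]_d$. The partial derivatives split into two groups: $f_{x_i}=g_i$ for $i=0,\ldots,n$, and $f_{u_j}=\sum_{i=0}^n x_i\,(g_i)_{u_j}+h_{u_j}$ for $j=1,\ldots,m$.

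First I would show $X$ is not a cone. By Proposition \ref{prop:GNcriteria}(i) it suffices to show $f_{x_0},\ldots,f_{x_n},f_{u_1},\ldots,f_{u_m}$ are linearly independent over $\K$. Suppose a linear combination $\sum_i a_i g_i+\sum_j b_j f_{u_j}=0$. Collecting the terms that genuinely involve the superfluous variables $x_0,\ldots,x_n$: the expression $\sum_j b_j f_{u_j}$ contains the terms $\sum_i x_i\big(\sum_j b_j (g_i)_{u_j}\big)$, and since the $g_i$ are forms of degree $d-1\ge 2$ in the $u$'s, for this to cancel we need $\sum_j b_j (g_i)_{u_j}=0$ for every $i$; summing $x_i$ times these and invoking Euler's identity shows each $b_j$-combination kills all the $g_i$ simultaneously, and because the $g_i$ are linearly independent and nonzero one deduces all $b_j=0$ (here one uses $d-1\ge 2$ so that a nonzero constant-coefficient first-order operator cannot annihilate a nonzero form of that degree unless its symbol vanishes). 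Then $\sum_i a_i g_i=0$ forces all $a_i=0$ by linear independence of the $g_i$. Hence the partials are linearly independent and $X$ is not a cone.

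Next I would show $\hess_f=0$, i.e. by Proposition \ref{prop:GNcriteria}(ii) that $f_{x_0},\ldots,f_{x_n},f_{u_1},\ldots,f_{u_m}$ are algebraically dependent over $\K$. By hypothesis $g_0,\ldots,g_n$ are algebraically dependent: there is a nonzero polynomial $P\in\K[y_0,\ldots,y_n]$ with $P(g_0,\ldots,g_n)\equiv 0$ in $\K[u_1,\ldots,u_m]$. Since $f_{x_i}=g_i$, the same relation $P(f_{x_0},\ldots,f_{x_n})\equiv 0$ holds among the $n+1$ derivatives $f_{x_i}$, hence a fortiori among the full set of $N+1$ partial derivatives. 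This is exactly algebraic dependence, so $\hess_f=0$. The irreducibility of $X$ is assumed as part of the definition of a Perazzo hypersurface, so nothing further is needed there.

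The main obstacle is the linear-independence argument in the ``not a cone'' step: one must carefully separate the monomials of $f_{u_j}$ that are linear in the superfluous variables from those (coming from $h$) that are not, and rule out accidental cancellation using only the structural hypotheses (the $g_i$ linearly independent, degree $d-1\ge 2$). The algebraic-dependence half is essentially immediate once one observes $f_{x_i}=g_i$ and quotes Proposition \ref{prop:GNcriteria}(ii); the real content is packaged in the Gordan-Noether criterion, which the excerpt permits us to assume.
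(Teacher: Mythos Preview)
Your argument for $\hess_f=0$ is correct and is exactly the paper's proof: since $f_{x_i}=g_i$ are algebraically dependent by hypothesis, Proposition~\ref{prop:GNcriteria}(ii) gives the vanishing immediately.

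However, you overlooked that ``not a cone'' is part of the \emph{definition} of a Perazzo hypersurface (the definition in the paper begins ``Let $X=V(f)\subset\P^N$, $N\geq 4$, be an irreducible hypersurface not a cone''). So there is nothing to prove for that half, and indeed the paper's proof addresses only the Hessian. Your attempt to derive ``not a cone'' from the remaining hypotheses is not just unnecessary but actually flawed: from $\sum_j b_j(g_i)_{u_j}=0$ for every $i$ you cannot conclude all $b_j=0$ using only the linear independence of the $g_i$. The assertion that ``a nonzero constant-coefficient first-order operator cannot annihilate a nonzero form of degree $\geq 2$'' is false; for instance $\partial_{u_1}$ kills $u_2^{d-1}$. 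Concretely, take $m=3$, $n=2$, $g_0=u_2^2$, $g_1=u_2u_3$, $g_2=u_3^2$ (linearly independent, algebraically dependent) and $h=0$: then $f_{u_1}=0$, the hypersurface \emph{is} a cone, and your argument would not detect it. This is precisely why ``not a cone'' has to be imposed in the definition rather than deduced.
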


\begin{proof} Since $f_{X_i} = g_i$ for $i=0,\ldots,n$ are algebraically dependent by hypothesis, by Proposition \ref{prop:GNcriteria}, we have $\hess_f=0$. 
\end{proof}

\begin{rmk}\rm 
Notice that if $n+1 > m$, then $g_i$ for $i=0,\ldots,n$ are algebraically dependent automatically.  
Perazzo original hypersurfaces are of degree $3$ and he constructed a series of cubic hypersurfaces in $\P^N$ for arbitrary  $N \geq 4$ with vanishing Hessian and not cones. 
These hypersurfaces are, modulo projective transformations, all cubic hypersurfaces with vanishing Hessian and not cones in $\P^N$ for $N=4,5,6$, (see \cite{Pe,GRu}).
\end{rmk}

\begin{defin}
 Let $R = \K[x_0,\ldots,x_n,u_1,\ldots,u_m]$. Let $Q= \displaystyle \sum_{i=0}^nx_0g_i \in R$ be a form of degree $e$ with 
 $g_i \in \K[u_1,\ldots,u_m]_{e-1}$ for $i=0,\ldots,n$ algebraically dependent but linearly independent.
 Let $\mu = \lfloor \frac{d}{e} \rfloor$. Let $P_j \in \K[u_1,\ldots,u_m]_{d-je}$ for $j =0,\ldots,\mu$. We say that 
 $$f=\displaystyle \sum_{j=0}^{\mu}Q^jP_j$$
 is a Permutti polynomial of type $(m,n,e)$. A hypersurface $X=V(f) \subset \P^N$, not a cone is called a Permutti hypersurface if $f$ is a reduced Permutti polynomial. 
\end{defin}

\begin{thm}\cite{Pt1, Pt2} \label{thm:permutti} Permutti hypersurfaces are not cones and have vanishing Hessian. 
\end{thm}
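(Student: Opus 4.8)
The plan is to imitate the one-line proof of Theorem \ref{thm:perazzo}: by the Gordan--Noether criterion (Proposition \ref{prop:GNcriteria}(ii)) it suffices to show that the first partial derivatives of a Permutti polynomial $f$ of type $(m,n,e)$ are algebraically dependent. The point is that the derivatives of $f$ in the superfluous directions $x_0,\ldots,x_n$ all carry a common factor. Writing $f=\sum_{j=0}^{\mu}Q^jP_j$ with $Q=\sum_{i=0}^n x_ig_i$, $g_i\in\K[u_1,\ldots,u_m]_{e-1}$ and $P_j\in\K[u_1,\ldots,u_m]_{d-je}$, and using $\partial Q/\partial x_i=g_i$ (valid since $g_i$ involves only the $u$'s), the chain rule gives
$$f_{x_i}=\Big(\sum_{j=1}^{\mu} jQ^{j-1}P_j\Big)g_i =: G\cdot g_i,\qquad i=0,\ldots,n,$$
where $G=\sum_{j=1}^{\mu} jQ^{j-1}P_j$ is independent of $i$.

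Next I would transfer the algebraic dependence of $g_0,\ldots,g_n$ (part of the hypothesis) to $f_{x_0},\ldots,f_{x_n}$. Since the $g_i$ are homogeneous of the same degree $e-1$, any nonzero relation among them splits into homogeneous components lying in distinct graded pieces of $\K[u_1,\ldots,u_m]$, so there is in fact a nonzero \emph{homogeneous} form $F\in\K[T_0,\ldots,T_n]$, say of degree $\delta$, with $F(g_0,\ldots,g_n)=0$. Then
$$F(f_{x_0},\ldots,f_{x_n})=F(Gg_0,\ldots,Gg_n)=G^{\delta}\,F(g_0,\ldots,g_n)=0,$$
so $f_{x_0},\ldots,f_{x_n}$ are algebraically dependent; a fortiori the full set $f_{x_0},\ldots,f_{x_n},f_{u_1},\ldots,f_{u_m}$ is algebraically dependent, and Proposition \ref{prop:GNcriteria}(ii) yields $\hess_f=0$. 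That $X=V(f)$ is not a cone is built into the definition of a Permutti hypersurface, so nothing further is needed; one may note in passing that when $X$ is not a cone the form $G$ is automatically nonzero, since $G\equiv 0$ would force $f\in\K[u_1,\ldots,u_m]$.

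The argument is short, and the only step requiring genuine care is the passage from ``$g_i$ algebraically dependent'' to ``$f_{x_i}=Gg_i$ algebraically dependent'': this is exactly where homogeneity of the $g_i$ is used, ensuring the relation can be taken homogeneous so that the common factor $G$ factors out as a pure power $G^{\delta}$ and the relation survives. The remaining ingredients — the chain-rule identity for $f_{x_i}$, the exclusion of the degenerate cases $\mu=0$ or $G=0$ via the ``not a cone'' hypothesis, and the fact that any superset of an algebraically dependent family is algebraically dependent — are routine.
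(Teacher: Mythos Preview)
Your proof is correct and follows the same approach as the paper's: compute $f_{X_i}=Gg_i$ with $G=\sum_{j=1}^{\mu}jQ^{j-1}P_j$, then infer algebraic dependence of the $f_{X_i}$ from that of the $g_i$ and invoke Proposition~\ref{prop:GNcriteria}(ii). The paper's proof simply asserts the implication ``$g_i$ algebraically dependent $\Rightarrow$ $Gg_i$ algebraically dependent'' without comment, whereas you supply the missing justification via the homogeneity of the $g_i$; this extra care is well placed, since for non-homogeneous relations the passage is not automatic.
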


\begin{proof} We have $f_{X_i} = (\displaystyle \sum_{j=1}^{\mu}jQ^{j-1}P_j)g_i=Gg_i$ for $i=0,\ldots,n$. Since, for $i=0,\ldots,n$, $g_i$ are algebraically dependent, 
$f_{X_i}$ are too. Therefore, by Proposition \ref{prop:GNcriteria}, we have $\hess_f=0$. 
\end{proof}

Finally we present the original Gordan and Noether hypersurfaces with a slight simplification. A modern proof that GN=polynomials have vanihsing hessian can be found in  \cite{CRS,Ru}.

\begin{defin} \label{def:GN} Let $R = \K[x_0,\ldots,x_n,u_1,\ldots,u_m]$. For $l=1,\ldots,s$ and for $i=0,\ldots,n$ let $\Phi_{il} \in \K[y_0,\ldots,y_r]$ and $\Psi_{k}^{il} \in \K[u_1,\ldots,u_m]$ 
be homogeneous polynomials. 
Let $g_{il} \in \K[u_1,\ldots,u_m]_{e-1}$ be given by $g_{il} = \Phi_{il}(\Psi_0^{il},\ldots,\Psi_r^{il})$, with $0 \leq i \leq n$ and $1 \leq l \leq s$ where $s=n-r$. 
Let $Q_l = x_0g_{0l}+\ldots+x_ng_{nl}$ with $l=1,\ldots,s$. 
Let $d>e$ and $\mu = \lfloor \frac{d}{e}\rfloor$. Let $P_j(z_1,\ldots,z_s,u_1,\ldots,u_m)$ for $j =0,\ldots,\mu$ be bi-forms of bi-degree $(j,d-ej)$. 
A GN hypersurface of type $(m,n,r,e)$ is defined by a polynomial 
 $$f=\displaystyle \sum_{j=0}^{\mu}P_j(Q_1,\ldots,Q_s,u_1,\ldots,u_m).$$
\end{defin}

%\begin{rmk}\rm The proof that the first $s$ derivatives of $f$ are algebraically dependent involves a dimensional argument. The original proof of \cite{GN} was revisited by Lossen \cite{Lo}. 
% For a geometric proof see \cite{CRS,Ru}. 
%\end{rmk}

%\begin{rmk}\rm \label{rmk:replace} All the classical constructions use the polynomial $f=x_0g_0(\underline{u})+\ldots+x_ng_n(\underline{u})$. 
%We named it a Perazzo polynomial. The ubiquity of this kind of polynomial 
%is due to the fact that the partial derivatives $f_{X_i} \in \K[\underline{u}]$ depend on a fixed subset of the variables. The starting point of the present work was to replace this 
%condition with another one that works for derivatives of order $k>1$.
%\end{rmk}

\begin{rmk}\rm It is easy to see that a Perazzo hypersurface is a Permutti hypersurface with $\mu=1$. Notice also that a GN hypersurface of type $(m,n,n-1,e)$ must have  
$s=1$, hence it is a Permutti hypersurface of type $(m,n,e)$. We have presented the constructions in an increasing order of generality and complexity 
but the chronological order is actually \cite{GN}, \cite{Pe} and \cite{Pt1}. 
\end{rmk}

The main result of Gordan-Noether in \cite{GN} is the following one. A geometric proof in modern terms can be found in \cite{GR, Ru}.

\begin{thm}\label{thm:GN3} \cite{GN}
 Let $X =V(f) \subset \P^4$ be a reduced hypersurface, not a cone, having vanishing Hessian. The                                                                                                                                                                                                                                                                                                                                                                                                                                                                                                                                                                                                                                                                                                                                                                                                                                                                                                                                                                                                                                                                                                                                                                                                                                                                                                                                                                                  n $f$ is a GN polynomial of type $(2,2,1,e)$ or equivalently 
 a Permutti polynomial of type $(2,2,e)$. 
\end{thm}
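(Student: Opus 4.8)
The plan is to translate the hypotheses into the language of the polar (gradient) map and then deploy the structural theory of Gordan and Noether. By Proposition \ref{prop:GNcriteria}, the hypothesis $\hess_f=0$ is equivalent to the algebraic dependence of the partials $f_{x_0},\ldots,f_{x_4}$, while $X$ not being a cone is equivalent to their linear independence. First I would consider the polar map $\Phi=\nabla f\colon \P^4\dashrightarrow \P^4$, $p\mapsto [f_{x_0}(p):\cdots:f_{x_4}(p)]$, and set $Z=\overline{\Phi(\P^4)}$. Algebraic dependence forces $Z$ to lie on a hypersurface, so $r:=\dim Z\leq 3$, whereas linear independence forces $Z$ to be nondegenerate, i.e.\ to span $\P^4$. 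Consequently the general fiber of $\Phi$ has dimension $4-r\geq 1$, and it is exactly this positive–dimensionality that the rest of the argument exploits.

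The core step is the Gordan--Noether structural theorem: the closure of the general fiber of $\Phi$ is a \emph{linear} subspace $L\subset\P^4$ of dimension $4-r$, along which $\nabla f$ is constant (equivalently, the tangent hyperplane to $X$ is constant along $L$). I would establish this by choosing an irreducible relation $G(f_{x_0},\ldots,f_{x_4})=0$ of minimal degree and forming the vector $\eta=\big(G_{y_0}(\nabla f),\ldots,G_{y_4}(\nabla f)\big)$. Differentiating $G(\nabla f)=0$ with respect to each $x_j$ and applying the chain rule gives $\sum_i G_{y_i}(\nabla f)\,f_{x_ix_j}=0$, so $\eta$ lies in the kernel of the Hessian matrix $\big(f_{x_ix_j}\big)$ and hence is tangent to the fibers of $\Phi$. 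Since $\eta$ depends only on $\nabla f$, its direction is constant along each fiber, forcing that fiber to be a line in the fixed direction $\eta$; iterating with a generating set of the ideal of relations, equivalently with the full distribution annihilated by $\Hess_f$, upgrades this to linearity of the entire general fiber in the expected dimension.

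Once linearity of fibers is in hand, I would run the dimension analysis in $\P^4$ for $r\in\{1,2,3\}$ and show that in every case exactly \emph{two} of the variables are essential. The family of linear fibers sweeps out $\P^4$ and is tangent to the dual variety $Z^\ast$; controlling this tangency rules out the configurations that would make $X$ a cone and isolates a two-dimensional space of essential variables. I would then pick projective coordinates $x_0,x_1,x_2,u_1,u_2$ adapted to the fibration, so that the fibers become the loci $\{(u_1:u_2)=\text{const}\}$ and the three partials $(f_{x_0}:f_{x_1}:f_{x_2})$ factor through a map $(u_1:u_2)\mapsto(g_0:g_1:g_2)$ with $g_i\in\K[u_1,u_2]_{e-1}$ linearly independent (and automatically algebraically dependent, being three forms in two variables). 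Writing $f_{x_i}=G\,g_i$ for a common factor $G$ independent of $i$ and integrating, together with Euler's identity, then recovers $f=\sum_{j=0}^{\mu}Q^jP_j$ with $Q=x_0g_0+x_1g_1+x_2g_2$ and $P_j\in\K[u_1,u_2]_{d-je}$, i.e.\ exactly a Permutti polynomial of type $(2,2,e)$, equivalently a GN polynomial of type $(2,2,1,e)$ in the sense of Definition \ref{def:GN}.

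The hard part will be the structural theorem on linearity of the fibers of $\Phi$: the kernel-of-the-Hessian computation only immediately yields tangency in a constant direction, and passing from this to linearity of the \emph{whole} fiber (when $r<3$) requires an inductive or foliation-theoretic argument, or the explicit Gordan--Noether study of the contact loci of $Z^\ast$. The second delicate point is the global case analysis in $\P^4$: after adapting coordinates to the fibration one must verify that the essential variables can always be reduced to exactly two and that the residual $x$-dependence is genuinely of the bilinear form $x_0g_0+x_1g_1+x_2g_2$, rather than a configuration that would force $X$ to be a cone or to have essential dimension greater than two.
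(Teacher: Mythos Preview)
The paper does not supply a proof of this theorem: it is stated in the Appendix as a classical result attributed to \cite{GN}, with the remark that ``a geometric proof in modern terms can be found in \cite{GR, Ru}.'' There is therefore no in-paper argument to compare your proposal against.

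That said, your outline follows the standard modern treatment (as in \cite{GR,Ru,CRS,Lo}): pass to the polar map $\nabla f$, use algebraic dependence of the partials to bound the dimension of its image, invoke the Gordan--Noether result that the fibres are linear, and then run the case analysis in $\P^4$ to isolate two essential variables and recover the Permutti form. You have correctly identified the two genuinely hard steps --- linearity of the full fibre (not just tangency in one constant direction) and the global coordinate normalization showing exactly two essential $u$-variables --- and you are right that both require substantive work beyond what you sketched. In particular, your integration step from $f_{x_i}=G g_i$ to $f=\sum Q^j P_j$ is more delicate than indicated: one must show that the common factor $G$ is itself a polynomial in $Q$ and the $u$-variables, which is where the Permutti structure actually enters. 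But as a roadmap your proposal is sound and matches the approach the paper defers to in its references.
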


{\bf Acknowledgments}.
I wish to thank Francesco Russo for his suggestions, corrections and conversations on this subject among many others.

I wish to thank the referee for the helpful corrections and suggestions on the detailed revision of a previous version of this work.  

I am also grateful to Giuseppe Zappal\`a for useful conversations and to the participants to the weekly Commutative Algebra/Algebraic Geometry Seminar in Catania, where I was 
introduced to this theme of research in the framework of the Research Project of the University of Catania FIR 2014 "Aspetti geometrici e algebrici della Weak e Strong Lefschetz Property". 

I wish to thank also Junzo Watanabe for his suggestions leading to a significative improvement of this note. \\

Partially supported  by the CAPES postdoctoral fellowship, Proc. BEX 2036/14-2.

\end{document}